% ----------------------------------------------------------------
% AMS-LaTeX Paper ************************************************
% **** -----------------------------------------------------------
\documentclass{amsart}
\usepackage{graphicx}
\usepackage{amssymb}
\usepackage{wrapfig}
% ----------------------------------------------------------------
\vfuzz2pt % Don't report over-full v-boxes if over-edge is small
\hfuzz2pt % Don't report over-full h-boxes if over-edge is small
% THEOREMS -------------------------------------------------------
\newtheorem{thm}{Theorem}[section]
\newtheorem{cor}[thm]{Corollary}
\newtheorem{lem}[thm]{Lemma}
\newtheorem{prop}[thm]{Proposition}
\theoremstyle{definition}
\newtheorem{defn}[thm]{Definition}
\theoremstyle{remark}
\newtheorem{rem}[thm]{Remark}
\numberwithin{equation}{section}
% MATH -----------------------------------------------------------
\newcommand{\norm}[1]{\left\Vert#1\right\Vert}
\newcommand{\abs}[1]{\left\vert#1\right\vert}
\newcommand{\set}[1]{\left\{#1\right\}}

\newcommand{\eps}{\varepsilon}

\newcommand{\dbar}{\bar\partial}
\newcommand{\ddbar}{\partial\bar\partial}

\DeclareMathOperator{\dom}{Dom}

\DeclareMathOperator{\re}{Re}
\DeclareMathOperator{\im}{Im}
\DeclareMathOperator{\supp}{supp}
\DeclareMathOperator{\dist}{dist}

% ----------------------------------------------------------------
\begin{document}

\title[The Diederich-Fornaess Index and Good Vector Fields]{The Diederich-Fornaess Index and Good Vector Fields}%
\author{Phillip S. Harrington}%
\address{SCEN 309, 1 University of Arkansas, Fayetteville, AR 72701}%
\email{psharrin@uark.edu}%

%\thanks{The author is partially supported by NSF grant DMS-1002332}
\subjclass[2010]{32U10, 32T35}
%\keywords{}%

%\date{}%
%\dedicatory{}%
%\commby{}%
% ----------------------------------------------------------------
\begin{abstract}
  We consider the relationship between two sufficient conditions for regularity of the Bergman Projection on smooth, bounded, pseudoconvex domains.  We show that if the set of infinite type points is reasonably well-behaved, then the existence of a family of good vector fields in the sense of Boas and Straube implies that the Diederich-Fornaess Index of the domain is equal to $1$.
\end{abstract}
\maketitle

\tableofcontents

% ----------------------------------------------------------------

\section{Introduction}

Let $\Omega\subset\mathbb{C}^n$, and let $P$ denote the Bergman Projection, i.e., the orthogonal projection from $L^2(\Omega)$ to the space of $L^2$ holomorphic functions on $\Omega$.  One of the central questions in several complex variables is the following: if $f$ is smooth on $\overline\Omega$, is $Pf$ also smooth on $\overline\Omega$?  When the answer is affirmative for all $f\in C^\infty(\overline\Omega)$, we say that $P$ is globally regular on $\Omega$.  The Diederich-Fornaess worm domain \cite{DiFo77a} is a known counterexample, as shown by Christ \cite{Chr96} using work of Barrett \cite{Bar92}.  Our goal in the present paper is to examine the relationships between several known sufficient conditions for global regularity.

One of the most important sufficient conditions for global regularity is compactness of the $\dbar$-Neumann operator, but this is known to be strictly stronger than global regularity, so we will say very little about compactness in the present paper, except to refer the interested reader to Chapter 4 in \cite{Str10}.

The most straightforward sufficient condition for global regularity is the existence of a smooth defining function for $\Omega$ which is plurisubharmonic on $\partial\Omega$ \cite{BoSt91}.  This can be generalized in two different directions.  The first generalization is already found in \cite{BoSt91}, and involves the existence of a family of vector fields on $\partial\Omega$ that have good commutation properties with $\dbar$ (see Definition \ref{defn:good_vector_field} below for the precise definition).  This condition was explored and expanded by Boas and Straube in several subsequent works; we have adopted the version stated in \cite{BoSt99}, although we will make significant use of the equivalent condition developed in \cite{BoSt93}.

The second generalization was introduced by Kohn in \cite{Koh99}, building on a structure introduced by Diederich and Fornaess in \cite{DiFo77b}.  For a smooth, bounded, pseudoconvex domain in $\Omega$, the Diederich-Fornaess Index is defined to be the supremum over all exponents $0<\eta<1$ admitting a smooth defining function $\rho_\eta$ for $\Omega$ such that $-(-\rho_\eta)^\eta$ is plurisubharmonic on $\Omega$.  Kohn showed that global regularity is obtained when the Diederich-Fornaess Index is equal to $1$ and
\[
  \liminf_{\eta\rightarrow 1^-}\sqrt[3]{1-\eta}\sup_{\partial\Omega}\abs{\nabla \varphi_\eta}=0,
\]
where $\rho_\eta=e^{-\varphi_\eta}\delta$ and $\delta$ is the signed distance function for $\Omega$.  This technical condition has been refined in \cite{Har11} to
\begin{equation}
\label{eq:uniformity_condition}
  \liminf_{\eta\rightarrow 1^-}\sqrt{1-\eta}\sup_{\partial\Omega}\abs{\nabla \varphi_\eta}=0
\end{equation}
(see also \cite{PiZa14} for an alternative condition).  Herbig and Fornaess have shown that the Diederich-Fornaess Index is equal to $1$ whenever $\Omega$ has a defining function that is plurisubharmonic on the boundary in \cite{HeFo07} and \cite{HeFo08} (their construction also implies \eqref{eq:uniformity_condition}; see Remark 6.3 in \cite{Har11}), so this is a true generalization of Boas and Straube's original condition.

Our goal in the present paper is to show that when the set of infinite type points is reasonably well-behaved, then the good vector field condition of Boas and Straube implies that the Diederich-Fornaess Index is equal to $1$.  Although the precise meaning of ``well-behaved" will require some work to define (see Section \ref{sec:main_result} below), in this introduction we will discuss a few corollaries that are easier to state.

Our main theorem will show that when the set of infinite type points is foliated by complex submanifolds in a nice way, then the good vector field condition implies that the Diederich-Fornaess Index is equal to $1$.  We will refer to the submanifolds in this foliation as admissible leaves when they satisfy the requirements of our main theorem (see Definitions \ref{defn:leaf_0} and \ref{defn:leaf_m} below).  Our motivating example for this foliation is the following:
\begin{cor}
\label{cor:cross_product}
  Let $\Omega\subset\mathbb{C}^n$ be a smooth, bounded, pseudoconvex domain admitting a family of good vector fields.  Suppose that the set of infinite type points $K$ is contained in a neighborhood $U$ admitting a holomorphic map $g:U\rightarrow\mathbb{C}^{n-m}$ such that for every $p\in K$, $\partial\Omega\cap g^{-1}[g(p)]$ is a complex submanifold of dimension $m$ with smooth boundary and $g_j[K]\subset\mathbb{C}$ is a set with 2-dimensional Hausdorff measure zero for every $1\leq j\leq n-m$.  Then the Diederich-Fornaess Index of $\Omega$ is equal to $1$.
\end{cor}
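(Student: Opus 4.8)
The plan is to verify that the fibres of $g$ through the points of $K$ constitute a foliation by \emph{admissible leaves} in the sense of Definitions \ref{defn:leaf_0} and \ref{defn:leaf_m}, and then to quote the main theorem.

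First I would normalize the geometry near $K$. For $p\in K$ the set $\partial\Omega\cap g^{-1}[g(p)]$ is an $m$-dimensional complex submanifold, hence a relatively open subset (with smooth boundary) of the fibre $g^{-1}[g(p)]$, which forces $g$ to have maximal rank along $K$; shrinking $U$ and assuming it bounded, I may then take $g=(g_1,\dots,g_{n-m})$ to be a holomorphic submersion on all of $U$. The leaves $L_c:=\partial\Omega\cap g^{-1}[c]$, $c\in g(K)$, are then pairwise disjoint $m$-dimensional complex submanifolds with smooth boundary; their union $\widetilde{K}:=\partial\Omega\cap g^{-1}[g(K)]$ is compact, contains $K$, and every point of $\partial\Omega$ near $K$ but outside $\widetilde{K}$ is of finite type.

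Next I would check that the $L_c$ are admissible. The foliation here is the simplest one the main theorem allows: its leaves are the level sets of a single holomorphic submersion, and its base is the set $g(K)\subset\mathbb{C}^{n-m}$. I expect the definition of an admissible leaf to be recursive in the base codimension --- peeling off the coordinates $g_{n-m},g_{n-m-1},\dots$ one at a time and, at each stage, requiring that the planar set traced out by the current coordinate be negligible --- so that the hypothesis $\mathcal{H}^{2}(g_j[K])=0$ for every $j$ is exactly the smallness needed to run the recursion down to the base case recorded in Definition \ref{defn:leaf_0}. In tandem, one must see that the data furnished by a family of good vector fields --- in the equivalent form of \cite{BoSt93}, a family of $1$-forms near $K$ in $\partial\Omega$ agreeing along $K$ with the canonical Levi obstruction form and having a small $\bar\partial_b$-defect --- restricts to each leaf $L_c$ as precisely the object against which one solves the relevant $\bar\partial_b$-equation on that complex manifold with boundary, which is what the admissibility of a leaf asks for.

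Once the hypotheses are in place, the equality of the Diederich--Fornaess index with $1$ follows directly from the main theorem. I expect essentially all of the work to lie in the previous step: because the definition of an admissible leaf is recursive and technical, the substance of the proof is the bookkeeping that a flat foliation by level sets of a holomorphic map with coordinatewise-null image satisfies every clause --- in particular matching the coordinatewise measure-zero hypothesis with the base case of the recursion, and confirming that the good vector field $1$-forms descend to the leaves with the control demanded there. The submersion normalization, the identification of $\widetilde{K}$, and the final invocation of the main theorem should all be routine.
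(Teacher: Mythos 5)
Your high-level strategy is right: show that the level sets $V_p=\partial\Omega\cap g^{-1}[g(p)]$ are admissible leaves and invoke Theorem \ref{thm:Main}. But two concrete misreadings of the definitions mean the verification, as you describe it, cannot be carried out.

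First, Definition \ref{defn:leaf_m} is \emph{not} recursive in the codimension of the base; there is no step that ``peels off the coordinates $g_{n-m},g_{n-m-1},\dots$ one at a time.'' The only recursion in that definition is hypothesis \eqref{item:leaf_m_boundary}, which descends in the \emph{dimension of the leaf}: boundary components of $V$ must be closures of lower-dimensional admissible leaves. Here that clause is settled immediately by the smooth-boundary hypothesis on $V_p$. The clause that actually carries the measure-theoretic weight is hypothesis \eqref{item:leaf_m_property_p}: one must produce, for each $M$ and $r$, a \emph{single} smooth function $\lambda_{M,r}$ near $g[K]\cap\overline{B(0,R_{M,r})}$ in $\mathbb{C}^{n-m}$ with $0\leq\lambda_{M,r}\leq 1$ and $i\ddbar\lambda_{M,r}\geq i\frac{M}{R_{M,r}^2}\ddbar|w|^2$. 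This is where the coordinatewise measure-zero hypothesis enters, and it does so in one stroke via Boas's theorem \cite{Boa88} (which your proposal never names): each $g_j[K]\subset\mathbb{C}$, being compact with 2-dimensional Hausdorff measure zero, satisfies Property $(P)$, so one obtains $\lambda_j$ on a neighborhood of $g_j[K]$ with $0\leq\lambda_j\leq 1$ and complex Hessian at least $\tilde M=\frac{(n-m)M}{R_{M,r}^2}$; then $\lambda_{M,r}(w)=\frac{1}{n-m}\sum_{j=1}^{n-m}\lambda_j(w_j)$ does the job. There is no cascade through codimensions down to a zero-dimensional base case.

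Second, you assert that admissibility of a leaf ``asks for'' the good vector field data to restrict to the leaf so that the relevant $\bar\partial_b$-equation can be solved there. That conflates two independent hypotheses of Theorem \ref{thm:Main}. Definitions \ref{defn:leaf_0} and \ref{defn:leaf_m} make no reference to good vector fields at all; admissibility is a purely geometric and measure-theoretic condition on $K$. The good vector field hypothesis enters separately, through Lemma \ref{lem:h_V_construction}, where it is used to build $h_{V,\eps}$ on each admissible leaf. In short, the missing ingredient in your proposal is the Boas Property $(P)$ result applied coordinatewise, which is the entire content of the paper's argument beyond bookkeeping.
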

For example, suppose $K$ is biholomorphic to $D\times C$, where $D$ is the unit disc in $\mathbb{C}$ and $C$ is the Cantor set.  Corollary \ref{cor:cross_product} will follow immediately from our main theorem by a result of Boas \cite{Boa88}, as we will show in Section \ref{sec:examples}.

However, we also wish to consider complex manifolds with non-smooth boundaries.  As discussed in \cite{BoSt93}, the obstruction to global regularity on the worm domain is the behavior of a certain cohomology class on the analytic annulus contained in the boundary.  One of our goals in this paper is to better understand the set of infinite type points when they are homotopic to the annulus but with simply connected interior, as illustrated by the two examples in Figure \ref{fig:examples} below.  We will obtain a relatively complete understanding of Example 1, but our methods will be inadequate for the study of Example 2, so we will conclude the paper with some open questions about this case.

\begin{figure}
  \centering
  % Requires \usepackage{graphicx}
  \includegraphics[width=0.8\textwidth]{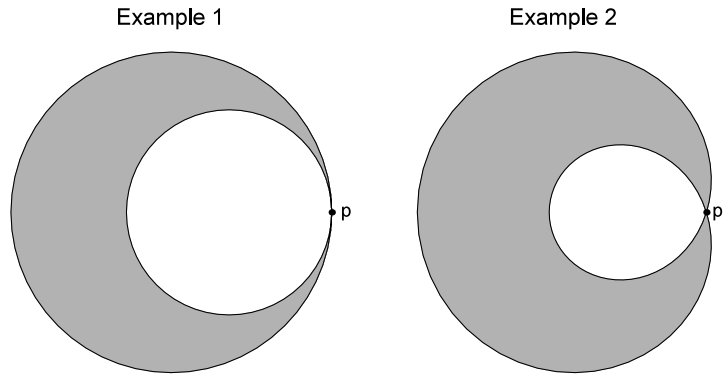}\\
  \caption{Examples of sets of infinite type points.  Example 1 is bounded by two circles intersecting only at $p$, but Example 2 satisfies an interior cone condition}\label{fig:examples}
\end{figure}

We will show that
\begin{prop}
\label{prop:annulus}
  Let $\Omega\subset\mathbb{C}^n$ be a smooth, bounded, pseudoconvex domain such that after a biholomorphic change of coordinates in a neighborhood of the set of infinite type points $K$, we have
  \[
    K=\{z\in\mathbb{C}^n:z_1\in S\text{ and }z_2=\cdots=z_n=0\},
  \]
  where $S\subset\mathbb{C}$ is the closure of a domain with smooth boundary except at $z=1$, $S\backslash\{1\}$ is simply connected, and for some $0<\gamma<1$ and $r>0$, $S\cap\overline{B(1,r)}$ is contained in the set $\{z\in\mathbb{C}:\abs{\im z}\geq\abs{\re z-1}^\gamma\}$, i.e., $S\cap\overline{B(1,r)}$ is contained between two algebraic curves with vertical tangent lines at the point $1$.  Then $\Omega$ admits a family of good vector fields.
\end{prop}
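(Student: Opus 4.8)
The plan is to construct the family of good vector fields directly, using the explicit coordinates near $K$. Recall that the Boas--Straube condition (in the form of \cite{BoSt93}) asks, roughly, for a family $\{X_\eps\}$ of tangential vector fields of unit modulus that approximately commute with $\dbar$ along the infinite-type set, the key quantitative requirement being that the ``error'' one-form $\alpha_\eps$ measuring the failure of $X_\eps$ to preserve the complex tangent bundle $T^{1,0}\partial\Omega$ can be made arbitrarily small in sup-norm on a neighborhood of $K$. When $K$ is a single complex submanifold (here $K$ is one-complex-dimensional, sitting in the $z_1$-axis), Boas and Straube's transversality criterion reduces to the existence of a function that trivializes a certain line bundle / de Rham class on $K$; the worm domain fails precisely because the analytic annulus in its boundary carries a nonzero class, whereas here the hypothesis that $S\setminus\{1\}$ is simply connected is exactly what kills that obstruction. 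So the heart of the argument is: (i) identify the relevant closed one-form $a$ on $S\setminus\{1\}$ (the pullback of the connection form of $T^{1,0}\partial\Omega$ restricted to $K$, which by pseudoconvexity and the flatness of $K$ is $\dbar$-closed in the appropriate sense), (ii) since $S\setminus\{1\}$ is simply connected write $a = d h$ for a smooth real function $h$ on $S\setminus\{1\}$, and (iii) check that $h$ and its first derivatives can be controlled near the bad point $z_1 = 1$ so that the resulting vector field family is genuinely admissible up to and including a full neighborhood of $K$ in $\partial\Omega$.

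Concretely, I would proceed as follows. First, fix the biholomorphic coordinates in which $K = \{z_1 \in S,\ z_2=\cdots=z_n=0\}$ and choose a defining function $\rho$ for $\Omega$ near $K$. Along $K$ the complex Hessian of $\rho$ restricted to the tangent space of $\partial\Omega$ has a one-dimensional kernel containing the $z_1$-direction; the obstruction to finding good vector fields is encoded in how the orthogonal complement of this kernel twists as $z_1$ moves around $S$. Following \cite{BoSt93}, this twisting is recorded by a real closed one-form (equivalently a $\dbar$-closed $(0,1)$-form whose class in $H^1$ must vanish). Because $S \setminus \{1\}$ is simply connected this class is exact, so there is a primitive $h$; setting $\varphi_\eps = e^{-\eps h}$ (or an analogous exponential damping, cut off to a neighborhood of $K$) and defining $X_\eps$ as the unit-modulus tangential field obtained by conjugating the ``complex normal'' field by $\varphi_\eps$, one obtains the required family once $\eps$ is taken small. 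The computation that $X_\eps$ has the good commutator property on $\partial\Omega$ is then the standard one from \cite{BoSt93}; the new content is entirely in the behavior of $h$ near $z_1=1$.

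The main obstacle, and the reason the geometric hypothesis on $S \cap \overline{B(1,r)}$ is present, is controlling the primitive $h$ near the singular boundary point $z_1=1$. Simple connectivity of $S \setminus \{1\}$ gives \emph{some} smooth primitive on the open set, but a priori $h$ or $\nabla h$ could blow up as $z_1 \to 1$ (this is exactly the kind of pathology that Example 2 in Figure \ref{fig:examples} is meant to exhibit, and why the paper can only handle Example 1). The cusp-type condition $S \cap \overline{B(1,r)} \subset \{|\im z| \ge |\re z - 1|^\gamma\}$ with $\gamma < 1$ forces $S$ to pinch to the point $1$ tangentially to the vertical line; I would exploit this to show that the one-form $a$ extends across $1$ with an integrable-type singularity, or better, that one may choose the connection / the coframe on $T^{1,0}\partial\Omega$ near $1$ so that $a$ is actually smooth up to $1$ after the change of coordinates. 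Then $h$ extends continuously (with the necessary derivative bounds) to all of $S$, the cutoff to a neighborhood of $K$ in $\partial\Omega$ poses no difficulty by the usual Boas--Straube extension arguments, and the family $\{X_\eps\}$ verifies Definition \ref{defn:good_vector_field}. I expect the estimate near $z_1=1$ — showing the primitive and its gradient stay bounded, using the $\gamma<1$ cusp geometry — to be the single technical crux; everything else is an adaptation of \cite{BoSt93}.
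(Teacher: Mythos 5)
Your high-level framework (use \cite{BoSt93} to reduce to the de Rham one-form $\alpha$ on $K$, then exploit simple connectivity of $S\setminus\{1\}$ to find a primitive $h$) matches the paper's construction on $K\setminus\{p\}$, and you are correct that the behavior near $p=(1,0,\ldots,0)$ is the technical crux.  However, the step you propose for that crux --- showing that $h$ (and its gradient) extends continuously to all of $S$ --- is false in exactly the cases the proposition is meant to cover.  The obstruction is not a local singularity of $\alpha$ (which is perfectly smooth at $1$), but a monodromy: a loop in $S$ that passes through $z_1=1$ and winds around the hole carries a nonzero period $\omega=\int_\gamma\alpha$.  Near $1$ the interior $S^\circ$ splits into two components (upper and lower, $\{\im z_1>0\}$ and $\{\im z_1<0\}$) that are joined in $S^\circ$ only by going all the way around the hole, so if $a,b\in S^\circ$ approach $1$ from opposite sides then $h(a)-h(b)\to\omega$.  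Hence no primitive extends continuously across $1$ when $\omega\neq 0$.  And $\omega\neq 0$ for the explicit worm-like examples of Section \ref{sec:examples}: if $h$ extended smoothly the Boas--Straube machinery would produce a plurisubharmonic defining function, contradicting Lemma \ref{lem:worm_like_domain}(3).  (This is also why the paper's Remark \ref{rem:example_1} notes that $K$ itself is \emph{not} an admissible leaf: hypothesis \eqref{item:leaf_m_paths} fails at $p$ precisely because $S^\circ$ disconnects there.)

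The paper's actual mechanism is different and genuinely necessary.  It never tries to extend the vector field $\tilde X_\eps$ built from $h$ past a neighborhood of $p$.  Instead it constructs an \emph{approximately holomorphic cutoff} $\psi_{\zeta,m}(z_1)=\sum_{j=0}^m(x-1)^j\frac{(-i)^j}{j!}\frac{d^j}{dy^j}\chi(2-\zeta^{-2}y^2)$ --- a partial Taylor sum in $\re z_1-1$ of a holomorphic extension of a real cutoff in $\im z_1$ --- chosen so that $\abs{\dbar\psi_{\zeta,m}}=O(\zeta^{m/\gamma-m-1})$ is small, and uses $\psi_\eps$ to interpolate between $\tilde X_\eps$ and the \emph{constant} $(1,0)$-field $4\sum_j\frac{\partial\delta}{\partial\bar z_j}(p)\frac{\partial}{\partial z_j}$ inside a thin box $\mathcal{O}_\eps$ around $z_1=1$.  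The constant field commutes with $\dbar$ identically, so it has no commutator error; the only new error comes from $\dbar\psi_\eps$, which is forced to be $O(\eps^2)$ by choosing $\zeta=\eps^{2\gamma/(1-\gamma)}$ and $m$ large.  The cusp hypothesis $S\cap\overline{B(1,r)}\subset\{\abs{\im z}\geq\abs{\re z-1}^\gamma\}$ is used precisely to guarantee that the transition happens only across the faces $\abs{\im z_1}=2\zeta$ of $\mathcal{O}_\zeta$ (where $\psi_\eps$ vanishes) and not across the side faces $\abs{\re z_1-1}=2(2\zeta)^{1/\gamma}$, so that $\partial\mathcal{O}_\zeta\cap S$ sees only the vanishing locus of the cutoff and the glued field is smooth on a full neighborhood of $K$.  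None of this gluing mechanism appears in your proposal, and your plan of extending $h$ continuously cannot replace it.
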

Note that Proposition \ref{prop:annulus} includes Example 1 in Figure \ref{fig:examples} (for any $\gamma>\frac{1}{2}$), but not Example 2.  Combining this with our main theorem, we will find that
\begin{cor}
\label{cor:annulus}
  Let $\Omega\subset\mathbb{C}^n$ be a smooth, bounded, pseudoconvex domain satisfying the hypotheses of Proposition \ref{prop:annulus}.  Then the Diederich-Fornaess Index of $\Omega$ is equal to $1$.
\end{cor}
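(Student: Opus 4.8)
The plan is to obtain Corollary \ref{cor:annulus} exactly as advertised: combine Proposition \ref{prop:annulus} with our main theorem from Section \ref{sec:main_result}. Proposition \ref{prop:annulus} already hands us a family of good vector fields on $\Omega$, so the entire content of the argument is the verification that the infinite type set $K$ appearing in Proposition \ref{prop:annulus} is ``well-behaved'' in the sense the main theorem requires; concretely, that $K$ is foliated by admissible leaves in the sense of Definitions \ref{defn:leaf_0} and \ref{defn:leaf_m}. Once this is in place, the main theorem applies and yields that the Diederich-Fornaess Index of $\Omega$ equals $1$.

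For the foliation, I would note that in the given coordinates $K=\{z\in\mathbb{C}^n:z_1\in S,\ z_2=\cdots=z_n=0\}$ is already a single $1$-dimensional complex submanifold of $\mathbb{C}^n$ biholomorphic to $S$, so the natural choice is the trivial foliation whose unique leaf is $K$ itself (in contrast to Corollary \ref{cor:cross_product}, here the single leaf has non-smooth boundary). The boundary $\partial K$ is a copy of $\partial S$, smooth except at the point $p=(1,0,\dots,0)$. First I would check that $p$, as a $0$-dimensional complex submanifold, satisfies Definition \ref{defn:leaf_0}, so that it qualifies as an admissible leaf of the lowest dimension; this should be immediate. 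Then I would verify the hypotheses of Definition \ref{defn:leaf_m} for the leaf $K$ whose singular boundary stratum is exactly $\{p\}$. The two structural hypotheses of Proposition \ref{prop:annulus} are tailored to this step: simple connectedness of $S\setminus\{1\}$ supplies the topological requirements of the definition (and is what distinguishes Example 1 in Figure \ref{fig:examples} from the analytic annulus responsible for the failure on the worm domain), while the containment of $S\cap\overline{B(1,r)}$ between the curves $|\im z|=|\re z-1|^\gamma$ provides the quantitative, H\"older-type control on the rate at which $\partial K$ pinches toward $p$ that Definition \ref{defn:leaf_m} demands.

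The main obstacle I anticipate is precisely this last verification at the singular point. Definition \ref{defn:leaf_m} will impose conditions on auxiliary functions adapted to the leaf near $p$ (rates of vanishing near $p$, comparability with powers of the distance to $p$, compatibility with the good vector fields, and so on), and one must unwind these and confirm that the exponent $\gamma\in(0,1)$ in the two bounding algebraic curves with vertical tangent lines lies in the admissible range. Everything else is bookkeeping: exhibiting the trivial foliation, invoking Proposition \ref{prop:annulus} for the good vector fields, and quoting the main theorem to conclude.
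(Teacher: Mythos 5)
Your macro-strategy is right — combine Proposition \ref{prop:annulus} (which supplies the good vector fields) with Theorem \ref{thm:Main} by exhibiting the required admissible-leaf structure on $K$ — but the micro-structure has two errors, and the more important one lands exactly on the step the paper spends its whole proof on.

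First, $K$ itself is \emph{not} an admissible leaf. This is the content of Remark \ref{rem:example_1}: hypothesis \eqref{item:leaf_m_paths} of Definition \ref{defn:leaf_m} fails at the pinch point $p=(1,0,\dots,0)$, since $V^\circ$ is disconnected in every small neighborhood of $p$ and two points on opposite sides of the pinch cannot be joined inside $U_p\cap V^\circ$ by a curve of length $O(|z-w|)$. So the admissible decomposition is \emph{two} leaves: $\set{p}$ of dimension $0$ and $V=K\backslash\set{p}$ of dimension $1$, with $\overline V\backslash V=\set{p}$ handled by hypothesis \eqref{item:leaf_m_boundary}. Once $\set{p}$ is admissible, checking Definition \ref{defn:leaf_m} for $K\backslash\set{p}$ is genuinely routine (take $f=(z_2,\dots,z_n)$, note $f[K\cap U]=\set{0}$, and observe $\partial S$ is smooth away from $1$, so \eqref{item:leaf_m_paths} holds at every point of $V$). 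You have the emphasis reversed: you call Definition \ref{defn:leaf_0} for $\set{p}$ ``immediate'' and anticipate that the real work lies in checking Definition \ref{defn:leaf_m} near $p$, whereas the latter is easy (and is never checked ``near $p$'' because $p\notin V$) and the former is the crux.

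Concretely, proving $\set{p}$ is an admissible leaf of dimension $0$ requires, for each $M,r>0$, producing a radius $R_{M,r}<r$ and a bounded weight $\lambda_{M,r}$ with $i\ddbar\lambda_{M,r}\geq i\frac{M}{R_{M,r}^2}\ddbar\abs{z}^2$ on a neighborhood of $K\cap\overline{B(p,R_{M,r})}$. This is a genuine constraint, not a formality: the set $K$ touches $p$, so $\lambda_{M,r}$ must carry a Hessian of size $M/R_{M,r}^2$ on a shrinking set that still contains $p$, while staying uniformly bounded. The $\gamma$-pinch hypothesis is what makes this possible: inside $\overline{B(p,R)}$ the set $K$ is confined to $\abs{\re z_1-1}\leq\abs{\im z_1}^{1/\gamma}\leq R^{1/\gamma}$, which is much smaller than $R$. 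One then takes $\lambda_{M,r}$ quadratic in $\re z_1-1$ and $z_2,\dots,z_n$ (these directions are thin), and \emph{zero} in the $\im z_1$ direction (where $K$ has full extent), and chooses $R_{M,r}<(2M)^{-\gamma/(2(1-\gamma))}$ so that the $R^{1/\gamma}$-thinness absorbs the $M/R^2$-sized Hessian while keeping $\lambda_{M,r}\leq 1$. That the needed Hessian is only required in the directions transverse to the ``wide'' direction of $K$ — and that $\gamma<1$ makes $R^{1/\gamma}\ll R$ — is exactly what the hypothesis of Proposition \ref{prop:annulus} buys you. Nothing of this is visible if one declares the step immediate. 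With the correction — $\set{p}$ and $K\backslash\set{p}$ as the two admissible leaves, and the explicit Property-$(P)$-type weight at $p$ built from the $\gamma$-condition — the rest of your argument coincides with the paper's.
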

In Section \ref{sec:examples}, we will construct a family of examples modeled on the Diederich-Fornaess worm domain \cite{DiFo77a} satisfying the hypotheses of Proposition \ref{prop:annulus} for any $0<\gamma<1$.  Because of their relationship to the worm domain, these examples will not admit a plurisubharmonic defining function, and even the existence of a family of good vector fields will require the full force of Definition \ref{defn:good_vector_field} (see Remark \ref{rem:strong_good_vector_field} below for details).

Unfortunately, our current assumptions are inadequate to prove \eqref{eq:uniformity_condition} as well.  This could be remedied by including an additional assumption on the growth rate of $\abs{\nabla\lambda_{M,r}}$ in Definitions \ref{defn:leaf_0} and \ref{defn:leaf_m} below, but we would lose results like Corollary \ref{cor:Property_P}, below.  Recall that a domain satisfies Catlin's Property $(P)$ if and only if for every $M>0$ there exists a smooth plurisubharmonic function $\lambda$ on $\overline\Omega$ such that $0\leq\lambda\leq 1$ on $\Omega$ and $i\ddbar\lambda\geq iM\ddbar\abs{z}^2$ on $\partial\Omega$.
\begin{cor}
\label{cor:Property_P}
  Let $\Omega\subset\mathbb{C}^n$ be a smooth, bounded pseudoconvex domain satisfying Catlin's Property $(P)$.  Then the Diederich-Fornaess Index of $\Omega$ is equal to $1$.
\end{cor}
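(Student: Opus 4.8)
The plan is to verify that every domain with Property $(P)$ satisfies the two hypotheses of the main theorem: the existence of a family of good vector fields, and that the set of infinite type points $K$ is foliated by admissible leaves. For the first hypothesis I would simply invoke the standard fact that Property $(P)$ implies the existence of a family of good vector fields in the sense of Definition~\ref{defn:good_vector_field} (see \cite{Str10}); note that the more familiar chain Property $(P)$ $\Rightarrow$ compactness $\Rightarrow$ global regularity is of no use here, since it says nothing about the Diederich--Fornaess Index, so it is essential that we route through the main theorem.

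For the second hypothesis, the key point is that Property $(P)$ prevents $\partial\Omega$, and hence $K$, from containing any non-constant analytic disc. Indeed, if $\phi\colon\overline{D}\to\partial\Omega$ were holomorphic and non-constant, and $\lambda_M$ is a function provided by Property $(P)$ for the constant $M$, then $\lambda_M\circ\phi$ is subharmonic on $D$ with $0\le\lambda_M\circ\phi\le 1$, while $i\ddbar\lambda_M\ge iM\ddbar\abs{z}^2$ on $\partial\Omega$ forces $\Delta(\lambda_M\circ\phi)\ge 4M\abs{\phi'}^2$; integrating over a slightly smaller disc bounds $M\int\abs{\phi'}^2$ by a constant independent of $M$, which is absurd as $M\to\infty$. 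We may therefore take the foliation of $K$ to be the partition into singletons: each $\{p\}$ with $p\in K$ is a complex submanifold of dimension $0$ with empty, hence smooth, boundary, so the parametrization and regularity requirements of Definition~\ref{defn:leaf_0} hold vacuously.

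It remains to produce, for each $p\in K$ and all $M,r>0$, the functions $\lambda_{M,r}$ demanded by Definition~\ref{defn:leaf_0}. This is essentially the content of Property $(P)$ itself: after an affine normalization, the global function $\lambda_M$ already has complex Hessian at least $M\,i\ddbar\abs{z}^2$ on all of $\partial\Omega$ --- in particular on $\partial\Omega\cap B(p,r)$ --- which dominates any fixed quadratic form appearing in the definition, so no localization is even needed (and if Definition~\ref{defn:leaf_0} additionally asks that $\lambda_{M,r}$ be supported near $p$ or vanish along $K$, a Richberg-type gluing with a large multiple of $\abs{z-p}^2$ supplies this without destroying plurisubharmonicity). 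Crucially, Property $(P)$ gives no bound on $\abs{\nabla\lambda_M}$, and Definition~\ref{defn:leaf_0} as it stands asks for none --- this is exactly the trade-off mentioned in the remark preceding the corollary. With both hypotheses of the main theorem in hand, the Diederich--Fornaess Index of $\Omega$ equals $1$.

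The only real work is bookkeeping: matching the Property $(P)$ function to the precise normalization of Definition~\ref{defn:leaf_0} (its sign and bounds, the direction of the Hessian estimate, any prescribed vanishing at $K$) while keeping plurisubharmonicity after any modification. This is genuinely routine here because the leaves are points, so the delicate part of the general theory --- assembling the leafwise data into a single defining function over a positive-dimensional, possibly non-smoothly bounded family of leaves --- simply does not occur; the price is that we cannot also establish \eqref{eq:uniformity_condition}, which would require the gradient control that Property $(P)$ does not provide.
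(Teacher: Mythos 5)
Your core idea matches the paper's proof: under Property $(P)$ every point of infinite type is an admissible leaf of dimension $0$, with the Property $(P)$ functions $\lambda_M$ (suitably rescaled and extended off $\partial\Omega$, as in Remark~\ref{rem:finite_type}) supplying the $\lambda_{M,r}$ required by Definition~\ref{defn:leaf_0}, and the corollary then follows from Theorem~\ref{thm:Main}. The observation that Definition~\ref{defn:leaf_0} demands no gradient bound is exactly the point emphasized in the paper, and your disc-free argument, while correct, is not actually needed to verify the definition.

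The genuine gap is your treatment of the first hypothesis of Theorem~\ref{thm:Main}. You assert as a ``standard fact'' that Property $(P)$ implies the existence of a family of good vector fields in the sense of Definition~\ref{defn:good_vector_field}, citing \cite{Str10}. There is no such implication in the literature; Property $(P)$ and the good vector field condition are understood as \emph{independent} sufficient conditions for global regularity, and neither is known to imply the other. You cannot discharge that hypothesis this way. The correct resolution---and what the paper's one-line proof is implicitly exploiting---is that the good vector field hypothesis is never actually invoked once all admissible leaves have dimension $0$. Tracing the dependency chain: the hypothesis enters only through Lemma~\ref{lem:h_V_construction}, which is used inside Lemma~\ref{lem:weight_near_leaf} solely for leaves of dimension $m\geq 1$; for $m=0$, Lemma~\ref{lem:weight_near_leaf} reduces directly to Lemma~\ref{lem:weight_near_point}, which needs nothing about vector fields. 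So the corollary is correct and your second hypothesis check carries the whole argument, but the claim ``Property $(P)\Rightarrow$ good vector fields'' should be replaced by the observation that the vector-field hypothesis is vacuous in the dimension-$0$ case.
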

\begin{rem}
  This Corollary appears to be known already, although a proof has not yet appeared in print.  We note it as a consequence of our main theorem, although a simpler and more direct proof is possible, as outlined in the opening paragraph of Section 5 in \cite{KLP16}.
\end{rem}
Since Catlin's Property $(P)$ requires no growth condition on the gradients of the weight functions involved, we run the risk of excluding examples if we impose a growth condition on our own weight functions simply to satisfy \eqref{eq:uniformity_condition}.

As a final observation, we note that sufficient conditions for global regularity on $\Omega$ are often related to sufficient conditions for the existence of a Stein neighborhood base for $\overline\Omega$.  We conclude this paper by noting that whenever our conditions imply that the Diederich-Fornaess Index is equal to $1$, it is also true that for every $0<\eta<1$ there exists a smooth defining function $\rho$ for $\Omega$ such that $\rho^{1/\eta}$ is plurisubharmonic outside of $\Omega$ (see the related results in \cite{HeFo07} and \cite{HeFo08}).

The outline for the remainder of the paper is as follows: in Section \ref{sec:main_result}, we will carefully define the structures that we need to provide a precise statement of our main result.  In Section \ref{sec:de_rham_cohomology}, we adapt ideas of Boas and Straube \cite{BoSt93} to show that the existence of a family of good vector fields implies that a certain $1$-form is exact (see \cite{Liu17b} for a similar analysis along these lines).  In Section \ref{sec:weight} we combine this with a family of weight functions with large hessians in order to build a weight function that is well-suited to the study of the Diederich-Fornaess Index (see \cite{Liu17a} for a more careful analysis of such weight functions).  We finally prove our main result in Section \ref{sec:global_construction}.  We show that these ideas can be easily adapted to constructing Stein neighborhood bases in Section \ref{sec:Stein}.  We conclude with a discussion of the corollaries to our main theorem in Section \ref{sec:examples}, including explicit examples and some open questions.

As in \cite{Liu17a}, we note that our results hold for any domain with $C^3$ boundary, although we have focused on smooth boundaries because this is necessary for the applications to global regularity.

The author would like to thank the referee for many corrections and helpful suggestions that have significantly improved the present paper.

\section{Statement of Main Result}
\label{sec:main_result}

We begin by inductively defining the sets of points that are amenable to our methods.  Our basic structure is an admissible leaf, which is essentially a complex submanifold that is maximal with respect to the set of infinite type points (see \cite{DAn93} for a precise definition and detailed exposition of finite and infinite type points).  However, an admissible leaf need not contain all of its own boundary points; those points which are excluded will need to be admissible leaves of a lower dimension.  See Remark \ref{rem:example_1} below to see how Figure \ref{fig:examples} illustrates this relationship.  To start our inductive definition, we consider the individual points that will be admissible.  For consistency of terminology in our later results, we will refer to a point as a leaf of dimension $0$.
\begin{defn}
\label{defn:leaf_0}
  For a compact set $K\subset\mathbb{C}^n$, let $p\in K$.  We say that $\{p\}$ is an \textbf{admissible leaf} of $K$ of dimension $0$ if for every $M>0$ and $r>0$ there exists a radius $0<R_{M,r}<r$ and a smooth function $\lambda_{M,r}$ defined on a neighborhood $U_{M,r}$ of $K\cap\overline{B(p,R_{M,r})}$ such that $0\leq\lambda_{M,r}\leq 1$ and $i\ddbar\lambda_{M,r}\geq i\frac{M}{R_{M,r}^2}\ddbar\abs{z}^2$ on $U_{M,r}$.
\end{defn}
Now that our base case has been established, we consider higher dimensional objects that will also be admissible.
\begin{defn}
\label{defn:leaf_m}
  For a compact set $K\subset\mathbb{C}^n$, let $V\subset K$ and $1\leq m\leq n-1$.  We say that $V$ is an \textbf{admissible leaf} of $K$ of dimension $m$ if there exists a neighborhood $U$ of $V$ and a holomorphic map $f:U\rightarrow\mathbb{C}^{n-m}$ such that
  \begin{enumerate}
    \item \label{item:leaf_m_vanishing} $f(z)=0$ for all $z\in V$,

    \item \label{item:leaf_m_lin_ind} $\{\partial f_1(z),\ldots,\partial f_{n-m}(z)\}$ is a linearly independent set over $\mathbb{C}$ for every $z\in V$,

    \item \label{item:leaf_m_property_p} for every $M>0$ and $r>0$ there exists a radius $0<R_{M,r}<r$ and a smooth function $\lambda_{M,r}$ defined on a neighborhood $U_{M,r}\subset\mathbb{C}^{n-m}$ of $f[K\cap U]\cap\overline{B(0,R_{M,r})}$ such that $0\leq\lambda_{M,r}\leq 1$ and $i\ddbar\lambda_{M,r}\geq i\frac{M}{R_{M,r}^2}\ddbar\abs{w}^2$ for $w\in U_{M,r}$,

    \item \label{item:leaf_m_topology} if $\tilde{V}\subset U$ is the vanishing set of $f$ and $V^\circ$ is the interior of $V$ relative to $\tilde{V}$, then $K\cap\tilde{V}=\overline{V^\circ}$,

    \item \label{item:leaf_m_paths} for every $p\in V$ there exists a neighborhood $U_p$ of $p$ and a constant $C_p>0$ such that any two points $z,w\in U_p\cap V^\circ$ are connected by a smooth curve in $U_p\cap V^\circ$ of length at most $C_p|z-w|$,

    \item \label{item:leaf_m_boundary} there are finitely many connected components of $\bar V\backslash V$, and each is the closure of an admissible leaf of $K$ of dimension $m'$ for some $0\leq m'<m$.
  \end{enumerate}
\end{defn}

\begin{rem}
  Hypotheses \eqref{item:leaf_m_vanishing} and \eqref{item:leaf_m_lin_ind} guarantee that $V$ is contained in an analytic variety $\tilde{V}$ that is nonsingular on $V$.  Since $V$ is not necessarily closed, this leaves open the possibility that $\tilde{V}$ might be singular at some boundary point of $V$.  However, these singular points must themselves be contained in an admissible leaf, as required by hypothesis \eqref{item:leaf_m_boundary}.
\end{rem}

\begin{rem}
  Hypothesis \eqref{item:leaf_m_property_p} guarantees that $\tilde V$ is maximal in $K$, in the following sense: if $f[K\cap U]$ is large enough to contain an analytic disc through the origin, then this disc will obstruct the existence of the weight functions $\lambda_{M,r}$.  On the other hand, if $f[K\cap U]$ has $2$-dimensional Hausdorff measure equal to zero, then hypothesis \eqref{item:leaf_m_property_p} is satisfied by a result of Boas \cite{Boa88}, which we have made use of in Corollary \ref{cor:cross_product}.  Motivated by these observations, we would like to replace hypothesis \eqref{item:leaf_m_property_p} with something like the requirement that the $2$-dimensional Hausdorff measure of $f[K\cap U]\cap\overline{B(0,r)}$ should vanish to order $o(r^2)$, as is the case in Corollary \ref{cor:annulus}.  Unfortunately, we have not found a precise measure-theoretic replacement for hypothesis \eqref{item:leaf_m_property_p}.
\end{rem}

\begin{rem}
  Hypothesis \eqref{item:leaf_m_topology} guarantees that $\tilde{V}$ is minimal with respect to $V$, since the interior of $V$ relative to $\tilde{V}$ must be nontrivial.
\end{rem}

\begin{rem}
  Hypothesis \eqref{item:leaf_m_paths} also guarantees that $V$ is locally connected in a strong sense, although we note that this is satisfied whenever the boundary of $V$ relative to $\tilde{V}$ is locally the graph of a continuous function with respect to some coordinate patch.  On the other hand, consider the domain in $\mathbb{C}$ parameterized by $V=\{e^{-t+i(\theta+t)}:t\geq 0,0\leq\theta\leq\pi\}$, i.e., near the origin $V$ is bounded by two logarithmic spirals.  The boundary of $V$ is not the graph of a continuous function for any coordinate patch containing the origin, but for any fixed $b>a>0$ and $0<\theta<\pi$, the length of the path $\gamma(s)=e^{-s+i(\theta+s)}$ for $a\leq s\leq b$ is equal to $\sqrt{2}(e^{-a}-e^{-b})$.  Hence, any two points $z,w\in V^\circ$ are connected by a path of length at most $\sqrt{2}||z|-|w||+|\arg(z/w)|\min\{|z|,|w|\}$, so hypothesis \eqref{item:leaf_m_paths} is still satisfied.
\end{rem}

\begin{rem}
  Hypothesis \eqref{item:leaf_m_boundary} requires that those boundary points of $V$ which are not in $V$ (and hence do not satisfy the local connectedness property of hypothesis \eqref{item:leaf_m_paths}) must themselves be contained in admissible leaves.
\end{rem}

\begin{rem}
\label{rem:example_1}
  Consider Example 1 in Figure \ref{fig:examples}.  $K$ itself is not an admissible leaf of dimension $1$, because hypothesis \eqref{item:leaf_m_paths} will fail at $p$ (the interior of $K$ is disconnected in a neighborhood of $p$).  However, we will show in Section \ref{sec:examples} that the point $p$ is an admissible leaf of dimension $0$ and hence $K\backslash\set{p}$ is an admissible leaf of dimension $1$ (see the proof of Corollary \ref{cor:annulus}).
\end{rem}

\begin{rem}
\label{rem:example_2}
  Example 2 in Figure \ref{fig:examples} is homeomorphic to the example on the left, but it is no longer admissible.  Suppose that $p$ is an admissible leaf of dimension $0$, and for $M>0$ and $r>0$ let $R_{M,r}$, $U_{M,r}$, and $\lambda_{M,r}$ be given by Definition \ref{defn:leaf_0}.  Let $\Gamma$ be any cone in $K\cap B(p,1)$ with vertex $p$.  Then $\lambda_M=\lambda_{M,r}(p+R_{M,r}(z-p))$ satisfies $i\ddbar\lambda_M\geq iM\ddbar\abs{z}^2$ and $0\leq\lambda_M\leq 1$ on $\Gamma$.  However, $\Gamma$ contains a disc, so we have a contradiction when $M$ is large.
\end{rem}

Recall that $\delta$ denotes the signed distance function, i.e., $\delta(z)=\dist(z,\partial\Omega)$ for $z\notin\Omega$ and $\delta(z)=-\dist(z,\partial\Omega)$ for $z\in\Omega$.  The following definition can be found in \cite{BoSt99} (see the hypotheses of Theorem 13), with some additional references and motivation.
\begin{defn}
\label{defn:good_vector_field}
  Let $\Omega\subset\mathbb{C}^n$ be a bounded pseudoconvex domain with smooth boundary.  We say that $\Omega$ admits a \textbf{family of good vector fields} if there exists a positive constant $C>0$ such that for every $\eps>0$ there exists a $(1,0)$ vector field $X_\eps$ with smooth coefficients on some neighborhood $U_\eps$ of the set of infinite type points $K\subset\partial\Omega$ satisfying
  \begin{enumerate}
    \item $C^{-1}<\abs{X_\eps\delta}<C$ on $U_\eps$,

    \item $\abs{\arg X_\eps\delta}<\eps$ on $U_\eps$, and

    \item $\abs{\partial\delta([X_\eps,\partial/\partial\bar z_j])}<\eps$ on $U_\eps$ for all $1\leq j\leq n$.
  \end{enumerate}
\end{defn}

With these definitions in mind, we are able to state our main result:
\begin{thm}
\label{thm:Main}
  Let $\Omega\subset\mathbb{C}^n$ be a smooth, bounded, pseudoconvex domain admitting a family of good vector fields.  Suppose that every point of infinite type belongs to an admissible leaf of the set of infinite type points.  Then
  \begin{enumerate}
    \item the Diederich-Fornaess index of $\Omega$ is equal to $1$ and
    \item for every $0<\eta<1$ there exists a smooth defining function $\rho$ and a neigh\-borhood $U$ of $\overline\Omega$ such that $\rho^{1/\eta}$ is strictly plurisubharmonic on $\Omega^c\cap U$.
  \end{enumerate}
\end{thm}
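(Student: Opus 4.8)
The plan is to construct, for each $0<\eta<1$, a defining function $\rho_\eta = e^{-\varphi_\eta}\delta$ for which $-(-\rho_\eta)^\eta$ is plurisubharmonic on $\Omega$, and simultaneously obtain the dual statement on $\Omega^c$; both will follow from the same weight function $\varphi_\eta$. The standard computation (going back to Diederich--Fornaess, and used throughout \cite{Har11}, \cite{HeFo07}, \cite{Liu17a}) shows that $-(-\rho_\eta)^\eta$ is plurisubharmonic on $\Omega$ provided that, on a neighborhood of $\partial\Omega$,
\[
  \eta\, i\ddbar\varphi_\eta - (1-\eta)\, i\,\partial\varphi_\eta\wedge\dbar\varphi_\eta \geq \eta\, i\,\partial\varphi_\eta\wedge\frac{\dbar\delta}{\delta} + \eta\,\frac{i\,\dbar\varphi_\eta\wedge\partial\delta}{\delta} + (\text{terms controlled by }i\ddbar\delta),
\]
so the decisive point is to produce a smooth real function $\varphi_\eta$ whose complex Hessian dominates $(1-\eta)\,i\,\partial\varphi_\eta\wedge\dbar\varphi_\eta$ by a large multiple near $K$, while $\partial\varphi_\eta$ nearly annihilates the bad direction $\partial\delta$. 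The mechanism for the latter is exactly the good-vector-field hypothesis: as in \cite{BoSt93}, the existence of a family of good vector fields forces a certain real $1$-form $\alpha$ on (a neighborhood of) $K$ — built from the $X_\eps$-data and measuring the failure of $\partial\delta$ to be $\dbar$-closed along the infinite-type directions — to be exact, say $\alpha = dh$ for a smooth real function $h$ near $K$. This is the content promised for Section \ref{sec:de_rham_cohomology}.

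The construction of $\varphi_\eta$ then proceeds leaf by leaf, using the inductive structure of Definition \ref{defn:leaf_m}. First I would work on a single admissible leaf $V$: pull back, via the holomorphic submersion $f:U\to\mathbb{C}^{n-m}$, the weight functions $\lambda_{M,r}$ of hypothesis \eqref{item:leaf_m_property_p}, rescaled on the ball $B(0,R_{M,r})$, to get functions on $U$ whose Hessian is $\gtrsim \frac{M}{R_{M,r}^2} i\ddbar\abs{f}^2$ transverse to $V$ while being bounded by $1$; patch these over a Whitney-type decomposition of a tubular neighborhood of $V$ (here hypothesis \eqref{item:leaf_m_paths}, the quasiconvexity of $V^\circ$, is what lets the cutoffs have controlled gradients relative to distance-to-$\partial V$, and hypothesis \eqref{item:leaf_m_topology} guarantees the leaf is genuinely there to patch along). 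Combine this transverse-Hessian bump with $e^{-A h}$, or rather with a function of the form $\psi = $ (large multiple of the leaf bump) $- A h$ so that $\partial\psi$ is forced, on the nose near $K$, to lie in the span of $\partial f_1,\dots,\partial f_{n-m}$ — i.e.\ $\partial\psi(X_\eps)$ is small — which is precisely what kills the dangerous $\partial\varphi_\eta\wedge\dbar\delta/\delta$ term. Then glue the contributions of the finitely many leaves of each dimension $0\le m'<m$ into the contribution of $V$, using hypothesis \eqref{item:leaf_m_boundary} and an induction on dimension, taking the exponents $M$ and radii $R_{M,r}$ coupled to $\eta$ so that $(1-\eta)$ times the square of the (necessarily large) gradient of $\varphi_\eta$ is still beaten by $\eta$ times its Hessian. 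This is the heart of Sections \ref{sec:weight} and \ref{sec:global_construction}. Finally, extend $\varphi_\eta$ off $K$ to all of a neighborhood of $\overline\Omega$ by a cutoff (away from $K$ the domain is of finite type and the plain signed distance function already works, so no large weight is needed there), and verify the differential inequality above with the $i\ddbar\delta$ error terms absorbed, recalling that $i\ddbar\delta\geq 0$ in the complex tangential directions at $\partial\Omega$ by pseudoconvexity.

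For part (2), the same $\varphi_\eta$ and the same $\rho=\rho_\eta = e^{-\varphi_\eta}\delta$ work: the inequality that makes $-(-\rho)^\eta$ plurisubharmonic inside $\Omega$ is, after changing the sign of $\delta$ and the direction of the inequality bookkeeping, exactly the inequality that makes $\rho^{1/\eta}$ (strictly) plurisubharmonic on $\Omega^c$ near $\partial\Omega$ — this symmetry is the observation recorded in the introduction and treated in Section \ref{sec:Stein}. Strictness on $\Omega^c\cap U$ comes for free because on the outside $\delta>0$ one may add $\eta'\abs{z}^2$ to $\varphi_\eta$ for small $\eta'$, or simply note that the defining-function-to-a-power computation produces a strictly positive Hessian contribution from the $(1/\eta)(1/\eta-1)\rho^{1/\eta-2}i\partial\rho\wedge\dbar\rho$ term together with the leaf bumps, once $U$ is taken thin enough; away from $K$ it is immediate since $\Omega^c$ is strictly pseudoconvex-from-outside there in the finite-type sense.

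The main obstacle I expect is the book-keeping in the inductive gluing: the weight on a leaf $V$ of dimension $m$ must have Hessian beating a prescribed large multiple of $i\ddbar\abs{z}^2$ \emph{uniformly up to $\partial V$}, yet near $\partial V$ it must smoothly match the (independently constructed, already large) weight supported on the lower-dimensional leaves, and all of this must be done with gradient bounds good enough that $(1-\eta)\abs{\nabla\varphi_\eta}^2$ does not swamp $\eta\,i\ddbar\varphi_\eta$ as $\eta\to1^-$. Getting the cutoff functions between the tubular neighborhoods of leaves of consecutive dimensions to have gradients $O(1/\dist(\cdot,\partial V))$ — which is exactly where hypothesis \eqref{item:leaf_m_paths} is used — and then checking that the cross terms produced by differentiating these cutoffs are absorbed by the leaf Hessians rather than overwhelming them, is the delicate quantitative core of the argument.
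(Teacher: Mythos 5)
Your proposal captures the broad architecture of the argument — inductively building weight functions along the admissible leaves, pulling back the $\lambda_{M,r}$ transversally through the submersion $f$, patching, and then running a Diederich--Fornaess-type Hessian computation to convert the boundary weight into plurisubharmonicity of a power of the defining function — but there are two substantive gaps that would block the proof as sketched.

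First, you assert that the good-vector-field hypothesis forces D'Angelo's form $\alpha$ to be exact near $K$, with a \emph{smooth} potential $h$, $\alpha = dh$. This is much stronger than what the hypothesis gives, and is in fact false for the worm-type examples constructed in Section~\ref{sec:examples}: if $\alpha$ were exact near $K$ one would essentially recover a plurisubharmonic defining function, which those domains do not admit. What the paper actually proves (Lemma~\ref{lem:h_V_construction}) is far more local and more delicate: a candidate potential $h$ is obtained as a weak$^*$ limit of the functions $\frac{1}{2}\log(X_\eps\delta)$ via Alaoglu's theorem; one then shows $\bar X h = \alpha(\bar X)$ only for $X$ tangent to analytic discs in $\partial\Omega$ (using the Cauchy integral formula); one uses hypothesis~\eqref{item:leaf_m_paths} to show $h$ is continuous on the leaf $V$ and indeed $C^\sigma$ in the Whitney sense; and one applies the Whitney extension theorem to get a function $h_{V,\eps}$ on $\mathbb{C}^n$ that agrees with $\alpha$ only after restricting both $dh_{V,\eps}$ and the basepoint to the leaf. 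The construction also tracks the second-order data $i\ddbar h_{V,\eps}|_{T^{1,0}(\tilde V)\wedge T^{0,1}(\tilde V)} = -\beta$, which encodes $d^c\alpha$ on the null space of the Levi form, and which is absent from your displayed key inequality. This $\beta$ term is essential; without it the Weinstock-type estimate in Lemma~\ref{lem:DF_index} does not close.

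Second, your proposed gluing mechanism — a Whitney-type decomposition with cutoffs whose gradients are $O(1/\dist(\cdot,\partial V))$, with hypothesis~\eqref{item:leaf_m_paths} enlisted to control those gradients — is not how the paper proceeds and, as you yourself flag in your last paragraph, would face a genuine obstruction: the cross terms produced by differentiating the cutoffs against the (necessarily large) leaf weights are not obviously dominated by the Hessians as $\eta\to 1^-$. The paper sidesteps this entirely: the patching Lemma~\ref{lem:patching} is a \emph{regularized maximum} $\chi((\xi^{-1}\varphi_j)_j)$, and the crucial point is that the quadratic inequality \eqref{eq:weight_estimate} (which is not of the form you display, but involves $\alpha$ and $\beta$ explicitly) is \emph{preserved} under this operation because of convexity of $\chi$ and the positive semi-definiteness of $(a_jI_{jk}-a_ja_k)$. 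Hypothesis~\eqref{item:leaf_m_paths} is used exclusively inside Lemma~\ref{lem:h_V_construction} to establish continuity and Whitney regularity of $h$ on the leaf — not to control cutoff gradients. A related slip: for part (2) it is \emph{not} the same $\varphi$ that works; the construction replaces $h_\eps$ with $-h_\eps$ and uses $\rho = e^{+\varphi}\delta$, so the Stein version is the outcome of a parallel argument (Lemmas~\ref{lem:Stein_weight_near_K}--\ref{lem:Stein_index}) rather than an immediate sign-flip. Finally, near finite-type points you cannot simply fall back on the signed distance function; the paper still needs Catlin's Property~$(P)$ there (Remark~\ref{rem:finite_type}) to run the $m=0$ construction.
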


\section{De Rham Cohomology of Admissible Leaves}
\label{sec:de_rham_cohomology}

Let $\Omega\subset\mathbb{C}^n$ be a smooth, bounded, pseudoconvex domain, and let $\delta$ denote the signed distance function for this domain.  For any smooth submanifold $E$ of $\mathbb{C}^n$ (real or complex), we let $T_p(E)$ denote the real tangent space of $E$ at $p$.  We let $T^{1,0}_p(E)\subset\mathbb{C}T_p(E)$ denote the subbundle of the complexified tangent space $\mathbb{C}T_p(E)$ consisting of type $(1,0)$ vector fields, with $T^{0,1}_p(E)=\overline{T^{1,0}_p(E)}$.  When $E=\partial\Omega$, we let $\mathcal{N}_p(\partial\Omega)\subset T_p(\partial\Omega)$ denote the null space of the Levi-form at $p$, i.e., the set of vectors $v\in\mathbb{C}^n$ such that $\sum_{j=1}^n v^j\frac{\partial\delta}{\partial z_j}(p)=0$ and $\sum_{j,k=1}^n v^j\frac{\partial^2\delta}{\partial z_j\partial\bar z_k}(p)\bar\tau^k=0$ whenever $\tau\in T_p(\partial\Omega)$.  Note that this is equivalent to
\begin{equation}
\label{eq:nullspace_Levi_form}
  \sum_{j=1}^n v^j\frac{\partial^2\delta}{\partial z_j\partial\bar z_k}(p)=4\sum_{j,\ell=1}^n v^j\frac{\partial^2\delta}{\partial z_j\partial\bar z_\ell}(p)\frac{\partial\delta}{\partial z_\ell}(p)\frac{\partial\delta}{\partial\bar z_k}(p)
\end{equation}
for every $1\leq k\leq n$.

In \cite{BoSt93}, Boas and Straube show that the existence of a family of good vector fields is closely tied to the properties of D'Angelo's $1$-form $\alpha$.  In local coordinates, this form can be written
\[
  \alpha=4\sum_{j=1}^n\re\left(\frac{\partial\delta}{\partial \bar z_j}\dbar\left(\frac{\partial\delta}{\partial z_j}\right)\right),
\]
as derived from (5.85) in \cite{Str10} (using $|\partial\delta|^2=\frac{1}{2}$ on $\partial\Omega$).  We will frequently use the notation $\pi_{1,0}\alpha$ (resp.\ $\pi_{0,1}\alpha$) to denote the projection of $\alpha$ onto its $(1,0)$-component (resp.\ $(0,1)$-component).  Since $|\nabla\delta|\equiv 1$ in a neighborhood of $\partial\Omega$, we have $0=\partial\sum_{j=1}^n\abs{\frac{\partial\delta}{\partial z_j}}^2$, so
\begin{equation}
\label{eq:alpha_projection}
  \pi_{1,0}\alpha=2\sum_{j=1}^n\frac{\partial\delta}{\partial z_j}\partial\left(\frac{\partial\delta}{\partial\bar z_j}\right)=-2\sum_{j=1}^n\frac{\partial\delta}{\partial\bar z_j}\partial\left(\frac{\partial\delta}{\partial z_j}\right).
\end{equation}
We note that to have intrinsic meaning, $\alpha$ should only be defined on the boundary of $\Omega$.  However, we will find it useful to have a smooth extension of $\alpha$ to a neighborhood of $\partial\Omega$, and these formulas in local coordinates will suffice to give us a canonical extension to any neighborhood on which $\delta$ is smooth.  Such a neighborhood can always be found on bounded domains with smooth boundaries by a result of Krantz and Parks \cite{KrPa81}

We will also find it helpful to use the real, positive semi-definite, $(1,1)$-form $\beta$ defined by
\[
  \beta=i\sum_{j,k=1}^n \partial\left(\frac{\partial\delta}{\partial z_j}\right)\left(I_{jk}-4\frac{\partial\delta}{\partial\bar z_j}\frac{\partial\delta}{\partial z_k}\right)\wedge\dbar\left(\frac{\partial\delta}{\partial \bar z_k}\right),
\]
where $I_{jk}$ is the identity matrix.  As usual, we define $d^c=i(\partial-\dbar)$.  Then
\[
  d^c\alpha=4\sum_{j=1}^n\re\left(d^c\left(\frac{\partial\delta}{\partial \bar z_j}\right)\wedge\dbar\left(\frac{\partial\delta}{\partial z_j}\right)\right)+4\sum_{j=1}^n\re\left(\frac{\partial\delta}{\partial \bar z_j}i\ddbar\left(\frac{\partial\delta}{\partial z_j}\right)\right).
\]
Since
\begin{multline*}
  0=i\ddbar\sum_{j=1}^n\abs{\frac{\partial\delta}{\partial z_j}}^2=\sum_{j=1}^n 2\re\left(\frac{\partial\delta}{\partial \bar z_j}i\ddbar\left(\frac{\partial\delta}{\partial z_j}\right)\right)\\
  +\sum_{j=1}^n i\partial\left(\frac{\partial\delta}{\partial \bar z_j}\right)\wedge \dbar\left(\frac{\partial\delta}{\partial z_j}\right)+\sum_{j=1}^n i\partial\left(\frac{\partial\delta}{\partial z_j}\right)\wedge \dbar\left(\frac{\partial\delta}{\partial\bar z_j}\right),
\end{multline*}
we have
\begin{multline*}
  d^c\alpha=-4\sum_{j=1}^n\re\left(i\dbar\left(\frac{\partial\delta}{\partial \bar z_j}\right)\wedge\dbar\left(\frac{\partial\delta}{\partial z_j}\right)\right)\\
  +2\sum_{j=1}^n i\partial\left(\frac{\partial\delta}{\partial \bar z_j}\right)\wedge\dbar\left(\frac{\partial\delta}{\partial z_j}\right)-2\sum_{j=1}^n i\partial\left(\frac{\partial\delta}{\partial z_j}\right)\wedge \dbar\left(\frac{\partial\delta}{\partial\bar z_j}\right).
\end{multline*}
If we restrict to the null space $\mathcal{N}_p(\partial\Omega)$ of the Levi-form at $p$, then we may use \eqref{eq:nullspace_Levi_form} and \eqref{eq:alpha_projection} to show
\[
  \partial\left(\frac{\partial\delta}{\partial\bar z_j}\right)\bigg|_{\mathcal{N}_p(\partial\Omega)}=2\frac{\partial\delta}{\partial\bar z_j}\pi_{1,0}\alpha\bigg|_{\mathcal{N}_p(\partial\Omega)}.
\]
This gives us
\begin{multline*}
  d^c\alpha|_{\mathcal{N}_p(\partial\Omega)\wedge\overline{\mathcal{N}_p(\partial\Omega)}}=-8\sum_{j=1}^n\re\left(i\frac{\partial\delta}{\partial z_j}\dbar\left(\frac{\partial\delta}{\partial \bar z_j}\right)\wedge\pi_{0,1}\alpha\right)\\
  +2i\pi_{1,0}\alpha\wedge\pi_{0,1}\alpha-2\sum_{j=1}^n i\partial\left(\frac{\partial\delta}{\partial z_j}\right)\wedge \dbar\left(\frac{\partial\delta}{\partial\bar z_j}\right).
\end{multline*}
The first term vanishes by another application of \eqref{eq:alpha_projection}, so
\begin{equation}
\label{eq:alpha_derivative}
  d^c\alpha|_{\mathcal{N}_p(\partial\Omega)\wedge\overline{\mathcal{N}_p(\partial\Omega)}}=-2\beta.
\end{equation}

Our main result in this section is the following:
\begin{lem}
\label{lem:h_V_construction}
  Let $\Omega\subset\mathbb{C}^n$ be a smooth, bounded, pseudoconvex domain admitting a family of good vector fields.  If $K\subset\partial\Omega$ is the set of infinite type points, let $V\subset K$ be an admissible leaf of $K$ of dimension $m$ for some $1\leq m\leq n-1$, and let $\tilde{V}$ denote the minimal analytic variety containing $V$.  For every $\eps>0$, if we define
  \[
    V_\eps:=\{z\in V:\dist(z,\bar V\backslash V)\geq\eps\},
  \]
  then there exists a smooth real-valued function $h_{V,\eps}$ on $\mathbb{C}^n$ such that
  \begin{align}
    \label{eq:h_first_derivative} d h_{V,\eps}|_{T_p(\tilde{V})}&=\alpha\text{ for any }p\in V_\eps,\\
    \label{eq:h_second_derivative} i\ddbar h_{V,\eps}|_{T^{1,0}_p(\tilde{V})\wedge T^{0,1}_p(\tilde{V})}&=-\beta\text{ for any }p\in V_\eps,
  \end{align}
  and $\abs{h_{V,\eps}}\leq\frac{1}{2}\log C$ on $V_\eps$ for a constant $C>1$ independent of $V$.
\end{lem}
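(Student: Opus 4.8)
The plan is to obtain $h_{V,\eps}$ as a primitive of $\alpha$ along $\tilde V$ on the relatively open set $V^\circ$, using the family of good vector fields only to control the periods and the sup-norm. First I would record three local facts along $V$: since $V\subset K$ consists of points of infinite type, the Levi form of $\partial\Omega$ vanishes on $T^{1,0}_p(\tilde V)$, so $T^{1,0}_p(\tilde V)\subset\mathcal N_p(\partial\Omega)$ for every $p\in V$; since $V^\circ$ is a nonempty relatively open subset of $\tilde V$ lying in $\partial\Omega$, the distance function $\delta$ vanishes identically on $V^\circ$; and restricting \eqref{eq:alpha_projection} and \eqref{eq:nullspace_Levi_form} to $T^{1,0}_p(\tilde V)$, exactly as in the derivation of \eqref{eq:alpha_derivative}, shows that $d\alpha$ vanishes on $T_p(\tilde V)\wedge T_p(\tilde V)$ for $p\in V^\circ$, i.e.\ that $\alpha|_{\tilde V}$ is closed on $V^\circ$. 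Granting this, \eqref{eq:alpha_derivative} together with $T^{1,0}_p(\tilde V)\subset\mathcal N_p(\partial\Omega)$ gives $d^c(\alpha|_{\tilde V})=-2\beta|_{\tilde V}$ on $V^\circ$; hence any smooth $h$ whose restriction to $\tilde V$ satisfies $d(h|_{\tilde V})=\alpha|_{\tilde V}$ on $V^\circ$ automatically has $2i\ddbar h=d^c\,dh=d^c\alpha$ there, which after restriction to $T^{1,0}_p(\tilde V)\wedge T^{0,1}_p(\tilde V)$ equals $-2\beta$. So \eqref{eq:h_second_derivative} is a free consequence of \eqref{eq:h_first_derivative} on $V^\circ$, and then at every $p\in V_\eps$ by continuity, since $V_\eps\subset V\subset\overline{V^\circ}$ and $\tilde V$ is smooth along $V$. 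Verifying the closedness of $\alpha|_{\tilde V}$ on $V^\circ$ is the step I expect to demand the most care; it is the counterpart on admissible leaves of the Boas--Straube fact that D'Angelo's one-form restricts to a closed form on the complex manifolds contained in $\partial\Omega$.

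Next I would use the good vector fields to kill the periods and bound the primitive. For $\eps'>0$ let $X_{\eps'}$ on $U_{\eps'}$ be as in Definition \ref{defn:good_vector_field}, and put $g_{\eps'}:=\log\abs{X_{\eps'}\delta}$, which is smooth and satisfies $\abs{g_{\eps'}}<\log C$ on $U_{\eps'}$ by condition (1). Adapting the computation of \cite{BoSt93} — the other place where real work is required — conditions (2) and (3) imply that $dg_{\eps'}-\alpha$ is $O(\eps')$ when restricted to the null directions of the Levi form, in particular on $T_p(\tilde V)$ for $p\in V$. Now fix $\eps>0$; by hypothesis \eqref{item:leaf_m_paths} of Definition \ref{defn:leaf_m}, the compactness of $V_\eps$, and the density of $V^\circ$ in $V_\eps$, any two points of $V_\eps$ can be joined by a curve in $V^\circ$ of length at most some $L=L(V,\eps)$. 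For any loop $\gamma$ in $V^\circ$ we then have $\abs{\int_\gamma\alpha}=\abs{\int_\gamma(\alpha-dg_{\eps'})}\le\operatorname{length}(\gamma)\,O(\eps')$; since $\int_\gamma\alpha$ is independent of $\eps'$ (the form being closed), it vanishes, so $\alpha|_{\tilde V}$ is exact on $V^\circ$. Choose a primitive $\phi$, normalized on each connected component of $V^\circ$ meeting $V_\eps$ so that $\phi=g_{\eps'}$ at a chosen basepoint $z_0$ of that component; then for $z\in V_\eps$,
\[
  \abs{\phi(z)}\le\abs{g_{\eps'}(z_0)}+\abs{\int_{z_0}^z(\alpha-dg_{\eps'})}+\abs{g_{\eps'}(z)-g_{\eps'}(z_0)}\le 3\log C+L\,O(\eps'),
\]
so for $\eps'$ small (depending on $V$ and $\eps$) we obtain $\abs{\phi}\le 4\log C$ on $V_\eps$; after replacing $C$ by $C^{8}$ this is the asserted bound, with a constant independent of $V$.

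Finally I would extend $\phi$ to all of $\mathbb C^n$. By hypothesis \eqref{item:leaf_m_paths}, $V^\circ$ is quasiconvex near each of its boundary points: nearby points of $V^\circ$ are joined by short arcs on which $d\phi=\alpha|_{\tilde V}$ is bounded, so $\phi$ extends continuously to $\overline{V^\circ}$, and its derivatives of every order — being universal polynomial expressions in $\alpha|_{\tilde V}$ and its exterior derivatives, all smooth near $\overline{V^\circ}$ because $\alpha$ is smooth near $\partial\Omega$ — extend continuously as well. By Whitney's extension theorem $\phi$ extends to a smooth function on a relative neighborhood of $V_\eps$ in $\tilde V$, and then, via a tubular neighborhood of $\tilde V$, to a smooth real-valued $h_{V,\eps}$ on $\mathbb C^n$. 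By construction $dh_{V,\eps}|_{T_p(\tilde V)}=d\phi|_p=\alpha$ for $p\in V^\circ$, hence for all $p\in V_\eps$ by continuity, which is \eqref{eq:h_first_derivative}; \eqref{eq:h_second_derivative} and the bound on $V_\eps$ have been arranged above. The single genuinely delicate point is the closedness of $\alpha|_{\tilde V}$ on $V^\circ$; everything else is bookkeeping with \eqref{eq:alpha_derivative}, Definition \ref{defn:good_vector_field}, and the geometry encoded in Definition \ref{defn:leaf_m}.
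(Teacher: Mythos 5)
The central step of your Paragraph~2 — that the good vector fields give a real one-form $dg_{\eps'}$ with $dg_{\eps'}-\alpha=O(\eps')$ on $T_p(\tilde V)$ — does not follow from Definition~\ref{defn:good_vector_field}, and this is exactly the point the paper's weak$^*$-limit argument is designed to get around. Writing $h_{\eps'}=\tfrac12\log(X_{\eps'}\delta)$ (so $g_{\eps'}=2\re h_{\eps'}$), the commutator condition controls only the $(0,1)$-part $\dbar h_{\eps'}-\pi_{0,1}\alpha$ on the Levi null space, as in \eqref{eq:dbar_h_eps}. But
\[
  \dbar g_{\eps'}=\dbar h_{\eps'}+\overline{\partial h_{\eps'}},\qquad
  \partial h_{\eps'}=\tfrac12 (X_{\eps'}\delta)^{-1}\sum_k\Bigl(\partial\delta\bigl(\bigl[\tfrac{\partial}{\partial z_k},X_{\eps'}\bigr]\bigr)+X_{\eps'}\tfrac{\partial\delta}{\partial z_k}\Bigr)dz_k,
\]
and the commutator $[\partial/\partial z_k,X_{\eps'}]$ (holomorphic, not anti-holomorphic) is nowhere constrained by Definition~\ref{defn:good_vector_field}; nor is the condition $|\arg X_{\eps'}\delta|<\eps'$ a bound on $\partial\arg(X_{\eps'}\delta)$. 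So $\partial h_{\eps'}$ is uncontrolled, hence so is $\dbar g_{\eps'}-\pi_{0,1}\alpha$, and your period estimate $|\int_\gamma\alpha|\le\mathrm{length}(\gamma)\cdot O(\eps')$ has no justification. The same gap invalidates the sup-norm bound on $\phi$, which you obtain by comparing $\phi$ with $g_{\eps'}$ along a path. The paper avoids this by working with the \emph{complex} functions $h_{\eps_k}$: one only ever estimates $\dbar h_{\eps_k}-\pi_{0,1}\alpha$, one passes to a pointwise/weak$^*$ limit $h$ (which is \emph{real-valued} because $\im h_{\eps_k}\to 0$ in $L^\infty$), and only then — using the generalized Cauchy integral formula \eqref{eq:cauchy_integral} to transfer the $\dbar$-control through the limit — does one get $\dbar h=\pi_{0,1}\alpha$ along discs, whence $dh=\alpha$ by reality of $h$. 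The asymmetry between $\partial$ and $\dbar$ in the hypotheses is essential, and cannot be short-circuited by passing to $\log|X_{\eps'}\delta|$ before taking the limit.

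A secondary issue: you assert that "restricting \eqref{eq:alpha_projection} and \eqref{eq:nullspace_Levi_form} \dots exactly as in the derivation of \eqref{eq:alpha_derivative}" shows $d\alpha=0$ on $T_p(\tilde V)\wedge T_p(\tilde V)$. The computation leading to \eqref{eq:alpha_derivative} produces a formula for $d^c\alpha$ restricted to $\mathcal N_p\wedge\overline{\mathcal N_p}$, not for $d\alpha$; it does not show closedness. The closedness of D'Angelo's form on complex submanifolds of $\partial\Omega$ is the separate Boas--Straube lemma the paper cites and uses ("Since $\alpha$ is closed when restricted to $T(\tilde V)\wedge T(\tilde V)$ on $V$, \ldots"); you need to invoke it, not rederive it from $d^c\alpha$.

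The remaining pieces of your outline — that \eqref{eq:h_second_derivative} follows from a primitive of $\alpha|_{\tilde V}$ by $2i\ddbar=d^c d$ and \eqref{eq:alpha_derivative}, the use of hypothesis \eqref{item:leaf_m_paths} to extend $\phi$ continuously past $\partial V^\circ$, and the Whitney extension — are consistent with the latter half of the paper's proof (the paper's construction of the Taylor polynomials $P_\sigma(w,z)$ supplies the "universal expressions in derivatives of $\alpha$" that you gesture at). But without the weak$^*$ limit in Paragraph~2 you have neither the exactness of $\alpha|_{\tilde V}$ nor the $\tfrac12\log C$ bound, so the argument as written does not close.
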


\begin{proof}

Let $\{X_\eps\}$ be a family of good vector fields.  We may assume $0<\eps<\pi$.  Since $\abs{\arg X_\eps\delta}<\eps$, we can use the principal branch of the logarithm to define $h_\eps=\frac{1}{2}\log\left(X_\eps\delta\right)$.  By assumption, $\abs{\re h_\eps}<\frac{1}{2}\log C$ and $\abs{\im h_\eps}<\frac{1}{2}\eps$ on $U_\eps$.

We compute
\[
  \dbar h_\eps=\frac{1}{2}(X_\eps\delta)^{-1}([\dbar,X_\eps]\delta+X_\eps\dbar\delta)=-\frac{1}{2}e^{-2h_\eps}\left(\sum_{j=1}^n\partial\delta\left(\left[X_\eps,\frac{\partial}{\partial\bar z_j}\right]\right)d\bar z_j-X_\eps\dbar\delta\right).
\]
Set $X_\eps^\tau=X_\eps-4e^{2h_\eps}\sum_{j=1}^n\frac{\partial\delta}{\partial \bar z_j}\frac{\partial}{\partial z_j}$, so that $X_\eps^\tau$ represents the tangential component of $X^\eps$.  This gives us
\[
  \dbar h_\eps-\pi_{0,1}\alpha=-\frac{1}{2}e^{-2h_\eps}\left(\sum_{j=1}^n\partial\delta\left(\left[X_\eps,\frac{\partial}{\partial\bar z_j}\right]\right)d\bar z_j-X_\eps^\tau\dbar\delta\right).
\]
By the definition of the null space of the Levi-form, $X_\eps^\tau\dbar\delta|_{\overline{\mathcal{N}_p(\partial\Omega)}}=0$ for any $p\in K$.  Hence,
\begin{equation}
\label{eq:dbar_h_eps}
  \left(\dbar h_\eps-\pi_{0,1}\alpha\right)\Bigg|_{\overline{\mathcal{N}_p(\partial\Omega)}}=
  -\frac{1}{2}e^{-2h_\eps}\sum_{j=1}^n\partial\delta\left(\left[X_\eps,\frac{\partial}{\partial\bar z_j}\right]\right)d\bar z_j.
\end{equation}

If $C(K)$ is the space of continuous functions on $K$ equipped with the $L^\infty$ norm, then $\{h_\eps|_K\}$ is uniformly bounded in $C(K)$.  By Alaoglu's Theorem, there exists $\tilde h\in ((C(K))^*)^*$ that is a weak$^*$ limit of some subsequence $\{h_{\eps_k}\}$.  Set $h_k=h_{\eps_k}$, so we have $\left<T,h_k\right>\rightarrow \left<\tilde h,T\right>$ for every $T\in(C(K))^*$.  Given $p\in K$, note that $\left<T(p),h_k\right>:=h_k(p)$ is a bounded linear functional on $C(K)$, so we may identify $\tilde h$ with a real-valued function on $K$ defined by $h(p):=\left<\tilde h,T(p)\right>=\lim_k h_k(p)$.  Since the right-hand side of \eqref{eq:dbar_h_eps} approaches zero by assumption, we have
\[
  \lim_{k\rightarrow\infty}\left(\bar X h_k\right)(p)=\alpha(\bar X)
\]
for each $p\in K$ and $X\in\mathcal{N}_p(\partial\Omega)$.

We say that $X\in T^{1,0}_p(\partial\Omega)$ is tangent to an analytic disc at $p$ if there exists an open neighborhood $\mathcal{O}$ of the origin in $\mathbb{C}$ and a holomorphic map $\varphi:\mathcal{O}\rightarrow\partial\Omega$ such that $\varphi(0)=p$ and $X=\left(\varphi_*\frac{\partial}{\partial z}\right)|_{z=0}$.  Recall the generalized Cauchy Integral Formula (see Theorem 2.1.1 in \cite{ChSh01}): for any bounded domain $D\subset\mathbb{C}$ with $C^1$ boundary, $u\in C^1(\overline{D})$, and $z\in D$, we have
\begin{equation}
\label{eq:cauchy_integral}
  u(z)=\frac{1}{2\pi i}\left(\int_{\partial D}\frac{u(\zeta)}{\zeta-z}d\zeta+\iint_D\frac{\partial u/\partial\bar\zeta}{\zeta-z}d\zeta\wedge d\bar\zeta\right).
\end{equation}
Suppose $X\in T^{1,0}_p(\partial\Omega)$ is tangent to an analytic disc at $p$, and let $\mathcal{O}\subset\mathbb{C}$ and $\varphi:\mathcal{O}\rightarrow\partial\Omega$ define the analytic disc in $\partial\Omega$ to which $X$ is tangential.  Since $\varphi^*(\bar X h_k)=\frac{\partial}{\partial\bar z}\varphi^* h_k$, we may use \eqref{eq:cauchy_integral} with the dominated convergence theorem to find that
\[
  \varphi^*h(z)=\frac{1}{2\pi i}\int_{\partial D}\frac{\varphi^*h(\zeta)}{\zeta-z}d\zeta
  +\frac{1}{2\pi i}\iint_D\frac{1}{\zeta-z}\varphi^*\left(\alpha(\bar X)\right)(\zeta)d\zeta\wedge d\bar\zeta
\]
for any open set $D$ containing $z$ that is relatively compact in $\mathcal{O}$.  Since $\int_{\partial D}\frac{\varphi^*h(\zeta)}{\zeta-z}d\zeta$ is holomorphic in $z$, we may use Theorem 2.1.2 in \cite{ChSh01} to show $\frac{\partial}{\partial\bar z}\varphi^* h(z)=\varphi^*\left(\alpha(\bar X)\right)$ for $z\in D$.  Since this hold for all such imbeddings of analytic discs, we conclude
\begin{equation}
\label{eq:h_derivatives}
  \bar X h(p)=\alpha(\bar X)
\end{equation}
for all $p\in K$ and $X\in T^{1,0}_p(\partial\Omega)$ tangent to an analytic disc at $p$.

Define $V^\circ$ as in Definition \ref{defn:leaf_m} \eqref{item:leaf_m_topology}.  Since $V^\circ$ is a complex submanifold, every vector tangent to $V^\circ$ is tangent to an analytic disc, so for any piecewise $C^1$ path $\gamma$ in $V^\circ$ connecting $\gamma(0)$ and $\gamma(1)$, we have shown
\begin{equation}
\label{eq:de_rham_integral}
  h(\gamma(1))-h(\gamma(0))=\int_\gamma \alpha.
\end{equation}
Fix $p\in V\backslash V^\circ$.  Since $p\in\overline{V^\circ}$ by Definition \ref{defn:leaf_m} \eqref{item:leaf_m_topology}, there must exist a sequence $\{z_j\}\subset V^\circ$ converging to $p$, and by restricting to a subsequence we may assume that $\{h(z_j)\}$ also converges.  Suppose that $\{w_j\}\subset V^\circ$ is a second such sequence.  By Definition \ref{defn:leaf_m} \eqref{item:leaf_m_paths}, there must exist a neighborhood $U_p$ of $p$ and a constant $C_p>0$ such that any two points $z,w\in U_p\cap V^\circ$ are connected by a smooth path in $U_p\cap V^\circ$ of length at most $C_p|z-w|$.  For $j$ sufficiently large, $z_j\in U_{p}$ and $w_j\in U_{p}$, so there exists a smooth curve $\gamma$ in $U_{p}\cap V^\circ$ connecting $z_j$ to $w_j$, but then \eqref{eq:de_rham_integral} gives us $\abs{h(z_j)-h(w_j)}\leq C_p|z_j-w_j|\sup_{\partial\Omega}|\alpha|$ for all $j$ sufficiently large.  We conclude that $\{h(z_j)\}$ and $\{h(w_j)\}$ must have the same limit, and hence we may define a unique continuous function $h_V$ on $V$ satisfying $h_V|_{V^\circ}=h$.

Let $f$ be given by Definition \ref{defn:leaf_m}.  We denote the singular set of $\tilde V$ by $\tilde V_{\mathrm{sing}}$ and define $\tilde V_{\mathrm{reg}}=\tilde{V}\backslash\tilde{V}_{\mathrm{sing}}$.  We will make frequent use of the fact that Definition \ref{defn:leaf_m} \eqref{item:leaf_m_lin_ind} guarantees that $V\subset\tilde{V}_{\mathrm{reg}}$.  For any $w\in\tilde{V}_{\mathrm{reg}}$, the implicit function theorem implies the existence of a $\mathbb{C}^{n-m}$-valued holomorphic map $a_w(z)$ in a neighborhood of $w$ satisfying $a_w(w)=0$ and
\[
  f_j\left(z_k-\sum_{\ell=1}^{n-m}a_w^\ell(z)\frac{\partial\bar f_\ell}{\partial\bar w_k}(w)\right)=0.
\]
On any compact subset of $\tilde{V}_{\mathrm{reg}}$, $a_w(z)$ is defined on a neighborhood of $w$ of uniform size.  With this in mind, the implicit function theorem also guarantees that $a_w(z)$ is conjugate holomorphic (and hence smooth) in $w$ on $\tilde{V}_{\mathrm{reg}}$.  Set
\[
  \pi_w(z)=z-\sum_{\ell=1}^{n-m}a_w^\ell(z)\frac{\partial\bar f_\ell}{\partial\bar w}(w),
\]
so that $\pi_w(z)$ is a holomorphic projection from a neighborhood of $w$ onto $\tilde V$.  Once again, $\pi_w(z)$ is conjugate holomorphic (and hence smooth) in $w$ on $\tilde{V}_{\mathrm{reg}}$.  Since $\alpha$ is closed when restricted to $T(\tilde V)\wedge T(\tilde V)$ on $V$, $\pi_w^*\alpha$ is closed whenever $\pi_w(z)\in V$.  On the other hand, we also have a family of $(1,0)$-vector fields $\{L^w_{1},\ldots,L^w_{n-m}\}$ defined in a neighborhood of $w$ such that $(\partial(\pi_{w})_k)(L^w_{\ell})=0$ and $\partial f_j(L^w_{\ell})=I_{j\ell}$ for all $1\leq k\leq n$ and $1\leq j,\ell\leq n-m$.  When $w\in V^\circ$, there exists a unique smooth function $P_w(z)$ defined in a neighborhood of $w$ satisfying $P_w(w)=0$ and
\[
  dP_w=\pi_w^*\alpha+d\left( 2\re\left(\sum_{\ell=1}^{n-m}\alpha(L^w_{\ell}) f_\ell\right)\right).
\]
Since $f=0$ on $V$ and $\set{L^w_{\ell}}$ is dual to $\set{\partial f_\ell}$, we can check that $dP_w$ is equal to $\alpha$ on a neighborhood of $w$ in $V$.

For $\sigma\geq 1$, let $P_\sigma(w,z)$ be the unique $\sigma$th degree Taylor polynomial for $P_w(z)$ centered at $w$.  For $\tilde\eps>0$, let $\tilde {V}_{\tilde\eps}$ denote the set of $w\in\tilde{V}$ such that $\dist(w,\tilde{V}_{\mathrm{sing}})\geq\tilde\eps$.  Note that the coefficients of $P_\sigma(w,z)$ can be explicitly computed in terms of derivatives of $\alpha$, $\pi_w$, $f$, and $L^w$ in a way that depends smoothly on $w$ on $\tilde V_{\tilde\eps}$, so $P_\sigma(w,z)$ can be extended smoothly to $w\in V\cap\tilde V_{\tilde\eps}$.  Furthermore, $P_\sigma(w,w)=0$ and
\begin{equation}
\label{eq:dP_minus_alpha}
  \abs{d_z P_\sigma(w,z)-\alpha|_z}\leq O(|z-w|^\sigma)
\end{equation}
for $z,w\in V\cap\tilde V_{\tilde\eps}$, with a constant depending on $\tilde\eps>0$.  For $p\in V$, let $U_p$ and $C_p$ be given by Definition \ref{defn:leaf_m} \eqref{item:leaf_m_paths}, so that for $z,w\in U_p\cap V^\circ$ there exists a smooth path in $U_p\cap V^\circ$ of length at most $C_p|z-w|$ connecting $z$ to $w$.  Choose $\tilde\eps>0$ sufficiently small so that $\dist(p,\tilde V_{\mathrm{sing}})>\tilde\eps$, and choose a neighborhood $\tilde U_p$ of $p$ that is sufficiently small so that for $z,w\in\tilde U_p\cap V^\circ$, there exists a smooth path in $U_p\cap V^\circ\cap\tilde V_{\tilde\eps}$ of length at most $C_p|z-w|$ connecting $z$ to $w$.  Then \eqref{eq:de_rham_integral} and \eqref{eq:dP_minus_alpha} imply that for $z,w\in \tilde U_p\cap V^\circ$ we have $|h(z)-h(w)-P_\sigma(w,z)|\leq O(|z-w|^{\sigma+1})$ with a constant depending only on $C_p$ and the constant in \eqref{eq:dP_minus_alpha}.  Since $P_\sigma(w,z)$ depends continuously on $w$ and $z$, we may use continuity to conclude $|h_V(z)-h_V(w)-P_\sigma(w,z)|\leq O(|z-w|^{\sigma+1})$ for $z,w\in\tilde U_p\cap V$.  Hence, $h_V$ is $C^\sigma$ on the compact set $V_\eps$ in the sense of Whitney \cite{Whi34} for any $\eps>0$.  We may use the Whitney extension theorem to extend $h_V$ from $V_\eps$ to a smooth real-valued function $h_{V,\eps}$ defined on all of $\mathbb{C}^n$.  We immediately obtain \eqref{eq:h_first_derivative}.  Since $T_p(\tilde{V})\subset\mathcal{N}_p(\partial\Omega)$, we obtain \eqref{eq:h_second_derivative} from \eqref{eq:alpha_derivative}.

\end{proof}

\section{Good Weight Functions}
\label{sec:weight}

Now that we may use Lemma \ref{lem:h_V_construction} to build a good function on each admissible leaf, we need good weight functions to handle directions that are transverse to each leaf.  We will proceed by induction.  Since the base case will have greater applicability, we set it aside with its own lemma.

\begin{lem}
\label{lem:weight_near_point}
  Let $K\subset\partial\Omega$ be a compact set and let $p\in K$ such that $\{p\}$ is an admissible leaf of $K$ of dimension $0$.  Then for every $0<\eta<1$, $r>0$, and $B>A>0$ there exists a neighborhood $U_p\subset B(p,r)$ of $p$ and a smooth, real-valued function $\varphi_p$ on $U_p$ such that $\varphi_p(p)\geq A$, $\varphi_p\leq -A$ on $\partial U_p\cap K$, $\abs{\varphi_p}\leq B$ on $U_p$, and
  \begin{equation}
  \label{eq:weight_near_point}
    i\ddbar\varphi_p+2\beta-i\frac{\eta}{1-\eta}(\partial\varphi_p-2\pi_{1,0}\alpha)\wedge(\dbar\varphi_p-2\pi_{0,1}\alpha)>0.
  \end{equation}
\end{lem}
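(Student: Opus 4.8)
The plan is to write $\mu=\tfrac{\eta}{1-\eta}$ and look for $\varphi_p$ of the form $\varphi_p=F(\lambda)+\psi$, where $\lambda=\lambda_{M,r_0}$ is the weight function furnished by Definition \ref{defn:leaf_0} for a suitable $M$ and a radius $r_0\le\min\{r/2,\,1\}$ small enough that $\delta$ (hence $\alpha,\beta$) is smooth on $B(p,r_0)$, $F$ is a carefully chosen smooth increasing function defined near $[0,1]$ with $0\le F\le\Delta$ there (where $\Delta:=\operatorname{osc}_{[0,1]}F$), and $\psi(z)=(A+\Delta)\bigl(1-2|z-p|^2/\rho^2\bigr)$ is a quadratic bump with $\rho>0$ still to be fixed. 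The neighborhood will be $U_p:=U_{M,r_0}\cap B(p,\rho)$ with $0<\rho<R_{M,r_0}$. Since $U_{M,r_0}$ is a neighborhood of $K\cap\overline{B(p,R_{M,r_0})}$ and $\overline{B(p,\rho)}\subset B(p,R_{M,r_0})$ is contained in it, every point of $\partial U_p\cap K$ must lie on $\partial B(p,\rho)$; hence $\psi(p)=A+\Delta$ and $\psi\equiv-(A+\Delta)$ on $\partial U_p\cap K$, so that $\varphi_p(p)\ge A$, $\varphi_p\le\Delta-(A+\Delta)=-A$ on $\partial U_p\cap K$, and $|\varphi_p|\le A+2\Delta$ on $U_p$ follow once $\operatorname{osc}_{[0,1]}F\le\tfrac12(B-A)$; moreover $\rho<R_{M,r_0}<r_0\le r$ gives $U_p\subset B(p,r)$.

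The content is inequality \eqref{eq:weight_near_point}. Setting $w:=\partial\psi-2\pi_{1,0}\alpha$ and expanding, its left-hand side equals
\[
  F'(\lambda)\,i\ddbar\lambda+\bigl(F''(\lambda)-\mu F'(\lambda)^2\bigr)\,i\partial\lambda\wedge\dbar\lambda+i\ddbar\psi+2\beta-\mu F'(\lambda)\bigl(i\partial\lambda\wedge\bar w+iw\wedge\dbar\lambda\bigr)-\mu\,iw\wedge\bar w .
\]
Because Definition \ref{defn:leaf_0} gives no control on $|\nabla\lambda|$ and $U_{M,r_0}$ may be an arbitrarily thin neighborhood of $K\cap\overline{B(p,R_{M,r_0})}$, the term $i\partial\lambda\wedge\dbar\lambda$ cannot be handled directly; the idea is to choose $F$ so that $F''-\mu F'^2$ is not merely $\ge0$ but dominates the cross term. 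Using $\pm(i\partial\lambda\wedge\bar w+iw\wedge\dbar\lambda)\le\varepsilon\,i\partial\lambda\wedge\dbar\lambda+\varepsilon^{-1}iw\wedge\bar w$ for a fixed $\varepsilon>0$, it suffices that $F'>0$ and $F''-\mu F'^2-\mu\varepsilon F'\ge0$ on $[0,1]$. Such an $F$ exists (e.g.\ take $F'$ a solution of $F''=\mu F'(F'+\varepsilon)$ with small enough initial value so that it stays finite on $[0,1]$), and replacing $F$ by $\theta F$ with $\theta\in(0,1]$ keeps this inequality (with slack $\mu\theta(1-\theta)F'^2\ge0$) while shrinking $\operatorname{osc}_{[0,1]}F$ below $\tfrac12(B-A)$.

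With $F$ so chosen, the terms $(F''-\mu F'^2-\mu\varepsilon F')\,i\partial\lambda\wedge\dbar\lambda\ge0$ and $2\beta\ge0$ may be discarded, and using $i\ddbar\lambda\ge\tfrac{M}{R_{M,r_0}^2}i\ddbar|z|^2$, $iw\wedge\bar w\le|w|^2\,i\ddbar|z|^2$, $|w|\le|\partial\psi|+2|\pi_{1,0}\alpha|\le\tfrac{2(A+\Delta)}{\rho}+2C_0$, and a bound $|\pi_{1,0}\alpha|\le C_0$ near $p$, the left-hand side of \eqref{eq:weight_near_point} is bounded below by
\[
  \Bigl(F'_{\min}\tfrac{M}{R_{M,r_0}^2}-\tfrac{2(A+\Delta)}{\rho^2}-\bigl(\mu+\mu F'_{\max}/\varepsilon\bigr)\bigl(\tfrac{2(A+\Delta)}{\rho}+2C_0\bigr)^2\Bigr)\,i\ddbar|z|^2 .
\]
Absorbing everything but $M/(R^2\rho^2)$ into a constant $C_2=C_2(A,B,\mu,\varepsilon,C_0)$ (using $\rho<r_0\le1$), it is enough that $F'_{\min}M/R_{M,r_0}^2>C_2/\rho^2$, i.e.\ $R_{M,r_0}\sqrt{C_2/(F'_{\min}M)}<\rho<R_{M,r_0}$. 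This interval is nonempty as soon as $M>C_2/F'_{\min}$, a threshold that depends only on $\eta$, $A$, $B$, $r$, and the geometry of $\Omega$ near $p$, and in particular not on the (unknown) size of $U_{M,r_0}$ or on $|\nabla\lambda|$. So one fixes such an $M$ first, then obtains $R_{M,r_0}$ and $\lambda$ from Definition \ref{defn:leaf_0}, and finally picks $\rho$ in the above interval; this completes the construction and yields \eqref{eq:weight_near_point} since $i\ddbar|z|^2>0$.

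The step I expect to be the main obstacle is precisely this choice of $F$: a naive ansatz such as $\varphi_p=a\lambda-b|z-p|^2+c$ forces one to control $|\nabla\lambda|^2$, and—unlike in Catlin's Property $(P)$, where one may work on fixed balls—here $\lambda$ lives only on a possibly very thin neighborhood of $K\cap\overline{B(p,R_{M,r_0})}$, so no such bound is available. Composing with an $F$ for which the Diederich--Fornaess self-interaction is dominated by $F''\,i\partial\lambda\wedge\dbar\lambda$ removes this difficulty; the matching technical point is to take $U_p=U_{M,r_0}\cap B(p,\rho)$ rather than a ball, so that $\lambda$ is never evaluated outside its domain and the bump radius $\rho$ may be chosen independently of the geometry of $U_{M,r_0}$.
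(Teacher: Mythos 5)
Your proof is correct and rests on the same two pillars as the paper's: use the $M/R_{M,r}^2$-scaled hessian of the Property-$(P)$ weight $\lambda$ from Definition~\ref{defn:leaf_0} to dominate a quadratic bump of the same scale $1/\rho^2$, and post-compose with a convexifying transform so that the resulting $\partial\varphi_p\wedge\dbar\varphi_p$ term absorbs the Diederich--Fornaess self-interaction. The implementations, however, are genuinely different decompositions. The paper writes $\varphi_p=-\zeta^{-1}\log\bigl(e^{-A\zeta}+(e^{A\zeta}-e^{-B\zeta})R^{-2}|z-p|^2-(e^{-A\zeta}-e^{-B\zeta})\lambda\bigr)$, so that $-e^{-\zeta\varphi_p}$ is a strictly plurisubharmonic affine combination of $\lambda$ and $|z-p|^2$; the log then supplies $i\zeta\,\partial\varphi_p\wedge\dbar\varphi_p$ \emph{jointly} for the two pieces, and a single algebraic rearrangement (the identity $(\tfrac{\eta}{1-\eta})^2=(\tfrac{\eta\zeta}{(1-\eta)\zeta-\eta}-\tfrac{\eta}{1-\eta})(\zeta-\tfrac{\eta}{1-\eta})$) does the rest, with no cross term to estimate. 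You instead write $\varphi_p=F(\lambda)+\psi$ with $\psi$ a separate quadratic, which means the self-interaction only comes from $F''\,\partial\lambda\wedge\dbar\lambda$ and you must control the cross term $\partial\lambda\wedge\bar w$ by a Cauchy--Schwarz with a fixed $\varepsilon$, forcing the stronger ODE inequality $F''\ge\mu F'(F'+\varepsilon)$ and a short finite-time-blowup argument to show such $F$ exists on $[0,1]$. Your accompanying bookkeeping (fixing $F$, hence $F'_{\min},F'_{\max},\Delta,C_2$, then $M$, then $R_{M,r_0}$ and $\lambda$, then $\rho$) is in the right order and avoids any need to control $|\nabla\lambda|$, which is exactly the obstruction both proofs must dodge. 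The tradeoff is that the paper's single-log formula is shorter and yields the side constants $\varphi_p(p)\ge A$, $\varphi_p\le-A$, $|\varphi_p|\le B$ essentially for free from the monotonicity of $\log$, whereas your version gets them from the explicit bump $\psi$ plus a rescaling $F\mapsto\theta F$; conversely, your argument makes the role of the convexity of the outer function more transparent and would adapt more readily if $\lambda$ and the bump could not be combined into a single plurisubharmonic expression.
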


\begin{rem}
\label{rem:finite_type}
  We emphasize that $K$ in this lemma is not necessarily the set of infinite type points.  In fact, our hypotheses are satisfied near any points of finite type with $K=\partial\Omega\cap\overline{B(p,R)}$ for some $R>0$ sufficiently small.  Let $\Omega_p\subset\Omega$ be a smooth, pseudoconvex domain such that $\partial\Omega_p\cap\partial\Omega=K$ and $\partial\Omega_p\cap\Omega$ is strictly pseudoconvex.  Then $\partial\Omega_p$ satisfies Catlin's Property $(P)$ (see Theorems 2 and 4 in \cite{Cat84}).  Hence, given $M>0$ and $r>0$, we may choose any $0<R_{M,r}<r$, and there will exist a plurisubharmonic function $\phi\in C^\infty(\overline\Omega_p)$ with $0\leq\phi\leq 1$ such that $i\ddbar\phi\geq i\frac{4M}{R_{M,r}^2}\ddbar\abs{z}^2$ on $\partial\Omega_p$.  We may smoothly extend $\phi$ to $\mathbb{C}^n$, in which case $\lambda_{M,r}=\frac{1}{2}\phi$ will satisfy Definition \ref{defn:leaf_0}.
\end{rem}

\begin{proof}
  Choose ${\zeta}>\frac{\eta}{1-\eta}$ and $M>\frac{(e^{A\zeta}-e^{-B\zeta})}{e^{-A\zeta}-e^{-B\zeta}}$.  Since nothing is lost by assuming that $r$ is smaller, we may assume that our given $r>0$ is sufficiently small so that
  \begin{multline}
  \label{eq:r_upper_bound}
    {\zeta}^{-1}e^{-B\zeta}(M(e^{-A\zeta}-e^{-B\zeta})-(e^{A\zeta}-e^{-B\zeta}))r^{-2}i\ddbar\abs{z}^2>\\
    i\frac{4\eta {\zeta}}{(1-\eta){\zeta}-\eta}\pi_{1,0}\alpha\wedge\pi_{0,1}\alpha-2\beta,
  \end{multline}
  on some neighborhood of $K$.  Let $R_{M,r}$, $U_{M,r}$, and $\lambda_{M,r}$ be given by Definition \ref{defn:leaf_0}.  Set $U_p=B(p,R_{M,r})\cap U_{M,r}$ and
  \[
    \varphi_p(z)=-{\zeta}^{-1}\log\left(e^{-A\zeta}+(e^{A\zeta}-e^{-B\zeta})R_{M,r}^{-2}|z-p|^2-(e^{-A\zeta}-e^{-B\zeta})\lambda_{M,r}(z)\right).
  \]
  Observe that $\varphi_p(p)\geq A$ but $\varphi_p\leq -A$ when $\abs{z-p}=R_{M,r}$.  On $U_p$, we have $-{\zeta}^{-1}\log(e^{-A\zeta}+e^{A\zeta}-e^{-B\zeta})\leq\varphi_p\leq B$, and since $e^{-A\zeta}+e^{A\zeta}<e^{-B\zeta}+e^{B\zeta}$ when $B>A>0$, we conclude $\varphi_p>-B$ as well.

  Now we compute
  \begin{multline*}
    i\ddbar(-e^{-{\zeta}\varphi_{p}})=-(e^{A\zeta}-e^{-B\zeta})R_{M,r}^{-2}i\ddbar\abs{z}^2+i(e^{-A\zeta}-e^{-B\zeta})\ddbar\lambda_{M,r}\\
    \geq(M(e^{-A\zeta}-e^{-B\zeta})-(e^{A\zeta}-e^{-B\zeta}))R_{M,r}^{-2}i\ddbar\abs{z}^2
  \end{multline*}
  on $U_{M,r}$.  Since
  \[
    i\ddbar(-e^{-{\zeta}\varphi_{p}})=i\zeta e^{-{\zeta}\varphi_{p}}\ddbar\varphi_{p}-i\zeta^2e^{-{\zeta}\varphi_{p}}\partial\varphi_{p}\wedge\dbar\varphi_{p},
  \]
  we have
  \[
    i\ddbar\varphi_{p}\geq {\zeta}^{-1}e^{{\zeta}\varphi_{p}}(M(e^{-A\zeta}-e^{-B\zeta})-(e^{A\zeta}-e^{-B\zeta}))R_{M,r}^{-2}i\ddbar\abs{z}^2+i\zeta\partial\varphi_{p}\wedge\dbar\varphi_{p}.
  \]
  Since
  \[
    i\left(\left(\zeta-\frac{\eta}{1-\eta}\right)\partial\varphi_{p}+\frac{2\eta}{1-\eta}\pi_{1,0}\alpha\right)\wedge\left(\left(\zeta-\frac{\eta}{1-\eta}\right)\dbar\varphi_{p}+\frac{2\eta}{1-\eta}\pi_{0,1}\alpha\right) \geq 0,
  \]
  we may rearrange terms using the identity $\left(\frac{\eta}{1-\eta}\right)^2=\left(\frac{\eta {\zeta}}{(1-\eta){\zeta}-\eta}-\frac{\eta}{1-\eta}\right)\left(\zeta-\frac{\eta}{1-\eta}\right)$ and divide by $\left(\zeta-\frac{\eta}{1-\eta}\right)$ to obtain
  \begin{multline*}
    i\zeta\partial\varphi_{p}\wedge\dbar\varphi_{p}\geq i\frac{\eta}{1-\eta}(\partial\varphi_{p}-2\pi_{1,0}\alpha)\wedge(\dbar\varphi_{p}-2\pi_{0,1}\alpha)\\
    -i\frac{4\eta {\zeta}}{(1-\eta){\zeta}-\eta}\pi_{1,0}\alpha\wedge\pi_{0,1}\alpha.
  \end{multline*}
  Since we already know $\varphi_p>-B$, \eqref{eq:weight_near_point} will follow provided that
  \begin{multline*}
    {\zeta}^{-1}e^{-B\zeta}(M(e^{-A\zeta}-e^{-B\zeta})-(e^{A\zeta}-e^{-B\zeta}))R_{M,r}^{-2}i\ddbar\abs{z}^2>\\
    i\frac{4\eta {\zeta}}{(1-\eta){\zeta}-\eta}\pi_{1,0}\alpha\wedge\pi_{0,1}\alpha-2\beta,
  \end{multline*}
  but this follows from \eqref{eq:r_upper_bound}, so the proof is complete.
\end{proof}

Before moving up to the higher dimensional case, we need a technical lemma.  The following lemma generalizes a standard technique for patching plurisubharmonic functions by taking a supremum and regularizing.

\begin{lem}
\label{lem:patching}
  For a finite set $J\subset\mathbb{N}$, let $\{\mathcal{O}_j\}_{j\in J}$ be a collection of open subsets of $\mathbb{C}^n$ such that $\alpha$ and $\beta$ are defined and smooth on $\overline{\mathcal{O}_j}$, and for some $0<\eta<1$ and every $j\in J$ let $\varphi_j\in C^\infty(\overline{\mathcal{O}_j})$ satisfy
  \begin{equation}
  \label{eq:weight_estimate}
    i\ddbar\varphi_j+2\beta-i\frac{\eta}{1-\eta}(\partial\varphi_j-2\pi_{1,0}\alpha)\wedge(\dbar\varphi_j-2\pi_{0,1}\alpha)>0
  \end{equation}
  on $\mathcal{O}_j$.  Suppose $\mathcal{O}_0\subset\mathbb{C}^n$ is an open set satisfying $\overline{\mathcal{O}_0}\subset\bigcup_{j\in J}\mathcal{O}_j$ and for every $j\in J$ and $z\in\partial\mathcal{O}_j\cap\overline{\mathcal{O}_0}$ there exists $k\in J$ such that $z\in\mathcal{O}_k$ and $\varphi_j(z)<\varphi_k(z)$.  Then for any $\epsilon>0$ there exists $\varphi_0\in C^\infty(\overline{\mathcal{O}_0})$ satisfying \eqref{eq:weight_estimate} on $\overline{\mathcal{O}_0}$ such that
  \[
    \epsilon>\varphi_0(z)-\max_{\{j\in J:z\in\mathcal{O}_j\}}\varphi_j(z)\geq 0
  \]
  on $\overline{\mathcal{O}_0}$ and $\varphi_0=\varphi_j$ on $\overline{\mathcal{O}_0}\cap\mathcal{O}_j\backslash\bigcup_{k\in J\backslash\{j\}}\mathcal{O}_k$.
\end{lem}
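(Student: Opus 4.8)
The plan is to carry over the classical construction of patching plurisubharmonic functions via a regularized maximum, the only nonstandard point being that one must check the ``concave'' rank-one term $-i\frac{\eta}{1-\eta}(\partial\varphi_j-2\pi_{1,0}\alpha)\wedge(\dbar\varphi_j-2\pi_{0,1}\alpha)$ in \eqref{eq:weight_estimate} is preserved. Fix, for each integer $p\ge 1$ and each $\tau>0$, a smooth regularized maximum $\mathcal{M}_\tau\colon\mathbb{R}^p\to\mathbb{R}$ obtained by convolving $\max$ with a compactly supported, symmetric product mollifier: it is symmetric and convex, satisfies $\partial_j\mathcal{M}_\tau\ge 0$, $\sum_j\partial_j\mathcal{M}_\tau\equiv 1$ and $\max\le\mathcal{M}_\tau\le\max+\tau$, reduces to the identity when $p=1$, and is independent of the variable $t_j$ wherever $t_j\le\max_{l\ne j}t_l-2\tau$. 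The heart of the proof is the local statement that if $\varphi_1,\dots,\varphi_p\in C^\infty(W)$ each satisfy \eqref{eq:weight_estimate} on an open set $W$ on which $\alpha,\beta$ are smooth, then $\varphi:=\mathcal{M}_\tau(\varphi_1,\dots,\varphi_p)$ satisfies \eqref{eq:weight_estimate} on $W$ as well.

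To prove this, write $c_j:=\partial_j\mathcal{M}_\tau\ge 0$ (so $\sum_jc_j=1$), $M_{jl}:=\partial_j\partial_l\mathcal{M}_\tau$ (a Hermitian positive semidefinite matrix, being the Hessian of a convex function), and $v_j:=\partial\varphi_j-2\pi_{1,0}\alpha$; since the $\varphi_j$ are real and $\alpha$ is a real form, $\overline{v_j}=\dbar\varphi_j-2\pi_{0,1}\alpha$. Differentiating twice gives
\[
  i\ddbar\varphi=\sum_j c_j\,i\ddbar\varphi_j+i\sum_{j,l}M_{jl}\,\partial\varphi_j\wedge\dbar\varphi_l,
\]
and $\sum_jc_j=1$ forces $\partial\varphi-2\pi_{1,0}\alpha=\sum_jc_jv_j$, so the left-hand side of \eqref{eq:weight_estimate} for $\varphi$ equals
\[
  \sum_jc_j\!\left(i\ddbar\varphi_j+2\beta-i\tfrac{\eta}{1-\eta}v_j\wedge\overline{v_j}\right)+i\sum_{j,l}M_{jl}\,\partial\varphi_j\wedge\dbar\varphi_l+i\tfrac{\eta}{1-\eta}\!\left(\sum_jc_j\,v_j\wedge\overline{v_j}-\Big(\sum_jc_jv_j\Big)\wedge\Big(\sum_jc_j\overline{v_j}\Big)\right).
\]
The first sum is a convex combination of positive definite $(1,1)$-forms, hence positive definite; the second term is $\ge 0$ because $M$ is positive semidefinite; and the third term is $\ge 0$ because, evaluated on any $(1,0)$-vector $\xi$, it is governed by the scalar inequality $\sum_jc_j|v_j(\xi)|^2\ge\bigl|\sum_jc_jv_j(\xi)\bigr|^2$, which is Cauchy--Schwarz. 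Thus \eqref{eq:weight_estimate} holds for $\varphi$; this bound on the concave term by Cauchy--Schwarz is the one genuinely nonclassical ingredient.

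It remains to globalize. For $z\in\overline{\mathcal{O}_0}$ let $J(z)=\{j\in J:z\in\mathcal{O}_j\}$, nonempty since $\overline{\mathcal{O}_0}\subset\bigcup_j\mathcal{O}_j$. Fix $z_0\in\overline{\mathcal{O}_0}$ and put $S=J(z_0)$; for each $j\in J$ with $z_0\in\partial\mathcal{O}_j$ choose $k(j)\in J$ with $z_0\in\mathcal{O}_{k(j)}$ and $\varphi_j(z_0)<\varphi_{k(j)}(z_0)$ (the boundary hypothesis), noting $k(j)\in S$. Using continuity and the finiteness of $J$, choose a ball $B=B(z_0,r_0)$ with $B\subset\mathcal{O}_j$ for every $j\in S$, with $B\cap\mathcal{O}_j=\emptyset$ whenever $z_0\notin\overline{\mathcal{O}_j}$, and with $\varphi_j<\varphi_{k(j)}-c_B$ on $B\cap\mathcal{O}_j$ for some $c_B>0$ and every $j$ with $z_0\in\partial\mathcal{O}_j$. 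On $B$, set $\varphi_0:=\mathcal{M}_\tau(\varphi_j:j\in S)$. Here $\max_{j\in S}\varphi_j=\max_{j\in J(z)}\varphi_j$ on $B$, since any $j\in J(z)\setminus S$ has $z_0\in\partial\mathcal{O}_j$, hence $\varphi_j<\varphi_{k(j)}-c_B\le\max_{i\in S}\varphi_i-c_B$; so once $\tau$ is fixed with $2\tau<c_B$, such a $\varphi_j$ lies more than $2\tau$ below the maximum and would not affect $\mathcal{M}_\tau$ if adjoined. A short computation with the same estimates shows that on the overlap of two such balls both local formulas reduce to $\mathcal{M}_\tau$ of the functions indexed by the intersection of the two local index sets, hence agree. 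Cover the compact set $\overline{\mathcal{O}_0}$ by finitely many such balls $B_1,\dots,B_N$ with constants $c_{B_1},\dots,c_{B_N}$, fix $0<\tau<\min\{\epsilon,\tfrac12\min_\mu c_{B_\mu}\}$, and patch the local definitions into $\varphi_0\in C^\infty$ on a neighborhood of $\overline{\mathcal{O}_0}$, which satisfies \eqref{eq:weight_estimate} by the local statement. The quantitative bound follows since locally $\max_{j\in J(z)}\varphi_j\le\varphi_0=\mathcal{M}_\tau(\varphi_j:j\in S)\le\max_{j\in J(z)}\varphi_j+\tau<\max_{j\in J(z)}\varphi_j+\epsilon$; and if $z\in\overline{\mathcal{O}_0}\cap\mathcal{O}_j\setminus\bigcup_{k\ne j}\mathcal{O}_k$ lies in some $B_\mu$, then $S_\mu=J(z_\mu)\subset J(z)=\{j\}$ (as $B_\mu\subset\mathcal{O}_i$ for all $i\in S_\mu$ and $z\in B_\mu$), whence $\varphi_0=\mathcal{M}_\tau(\varphi_j)=\varphi_j$ near $z$.

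I expect the main obstacle to be the globalization bookkeeping rather than any single estimate: because the active index set $J(z)$ jumps as $z$ crosses a boundary $\partial\mathcal{O}_j$, one has to verify that the boundary hypothesis genuinely forces every ``disappearing'' $\varphi_j$ to drop below its strictly larger neighbor $\varphi_{k(j)}$ by a definite amount on a whole neighborhood, so that the regularized maxima formed over the various local index sets agree on overlaps and patch to a single smooth function. Once this is set up, the positivity of \eqref{eq:weight_estimate} for the patched function is exactly the convexity-plus-Cauchy--Schwarz computation of the second paragraph.
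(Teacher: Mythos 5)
Your proof is correct and follows essentially the same strategy as the paper: a regularized maximum of the $\varphi_j$ inherits \eqref{eq:weight_estimate} because the Hessian produces a convex combination of the individual Hessians plus a positive semidefinite correction, and the negative rank-one term is controlled by the same Cauchy--Schwarz (Jensen-type) estimate $\sum_j c_j\,i v_j\wedge\bar v_j\ge i\bigl(\sum_j c_j v_j\bigr)\wedge\bigl(\sum_k c_k\bar v_k\bigr)$, which the paper derives from positive semidefiniteness of $(a_jI_{jk}-a_ja_k)_{j,k}$. The only organizational difference is in the globalization: the paper sidesteps the local-ball bookkeeping by extending every $\varphi_j$ to all of $\mathbb{C}^n$ by a constant below $\min_j\inf_{\overline{\mathcal{O}_j}}\varphi_j$, so that a single regularized maximum over all of $J$ can be written down globally, with the gap hypothesis on $\partial\mathcal{O}_j\cap\overline{\mathcal{O}_0}$ ensuring the constant extension is invisible near those boundaries and hence the patched function is smooth.
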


\begin{proof}
  Let $\psi\in C^\infty_0(\mathbb{R}^{|J|})$ be a nonnegative radially symmetric function satisfying $\supp\psi=\overline{B(0,1)}$ and $\int\psi=1$.  Define $\chi\in C^\infty(\mathbb{R}^{|J|})$ by
  \[
    \chi(x)=\int_{\mathbb{R}^{|J|}}\psi(x-y)\left(\max_{j\in J}y_j\right) dV_y.
  \]
  For any $j\in J$, $\frac{\partial}{\partial x_j}\psi(x-y)=-\frac{\partial}{\partial y_j}\psi(x-y)$, so integration by parts (used with caution, since $\max_{j\in J}y_j$ is continuous but only piecewise differentiable) will give us
  \[
    \frac{\partial}{\partial x_j}\chi(x)=\int_{\{y\in\mathbb{R}^{|J|}:y_j\geq y_k\text{ for all }k\in J\}}\psi(x-y)dV_y\geq 0.
  \]
  Hence, $\sum_{j\in J}\frac{\partial\chi}{\partial x_j}(x)\equiv 1$.  Note that $\chi$ is convex since $\max_{j\in J}y_j$ is convex with respect to $y\in\mathbb{R}^{|J|}$.  Furthermore, $\chi(x)=x_j$ whenever $j\in J$ and $x_j\geq x_k+1$ for all $k\in J\backslash\{j\}$ by the mean value property.  Combining this with convexity, we see that
  \[
    \chi(x)\geq\max_{j\in J}x_j.
  \]
  Since the Lipschitz constant of $\max_{j\in J}y_j$ is equal to $1$, we have
  \[
    \chi(x)-\max_{j\in J}x_j=\int_{\mathbb{R}^{|J|}}\psi(x-y)\left(\max_{j\in J}y_j-\max_{j\in J}x_j\right)dV_y\leq 1.
  \]

  As a technical convenience, we set $C=\min_{j\in J}\inf_{z\in\overline{\mathcal{O}_j}}\varphi_j(z)$, and extend each $\varphi_j$ to $\mathbb{C}^n$ by defining $\varphi_j(z)=C$ whenever $z\notin\overline{\mathcal{O}_j}$.  This notation and our hypotheses allow us to define $\max_{j\in J}\varphi_j(z)$ as a continuous function on $\overline{\mathcal{O}_0}$ without restricting $J$ to those indices such that $z\in\mathcal{O}_j$.  Since $\max_{j\in J}\varphi_j(z)$ is continuous on $\overline{\mathcal{O}_0}$, we may fix
  \[
    0<\xi<\min\set{\epsilon,\min_{j\in J}\inf_{z\in\partial\mathcal{O}_j\cap\overline{\mathcal{O}_0}}\max_{k\in J}(\varphi_k(z)-\varphi_j(z))}.
  \]
  We define
  \[
    \varphi_0(z)=\xi\cdot\chi\left((\xi^{-1}\varphi_j(z))_{j\in J}\right)
  \]
  whenever $z\in\overline{\mathcal{O}_0}\cap\mathcal{O}_j\cap\mathcal{O}_k$ for $j,k\in J$ with $j\neq k$, and $\varphi_0(z)=\varphi_j(z)$ when $z\in\overline{\mathcal{O}_0}\cap\mathcal{O}_j\backslash\bigcup_{k\in J\backslash\{j\}}\mathcal{O}_k$ for some $j\in J$.  When $\varphi_j(z)\geq\varphi_k(z)+\xi$ for some $j\in J$ and for any $k\in J\backslash\{j\}$, then $\xi\cdot\chi\left((\xi^{-1}\varphi_j(z))_{j\in J}\right)=\varphi_j(z)$, so $\varphi_0$ is smooth on $\overline{\mathcal{O}_0}$.  Our estimates for $\chi$ give us
  \[
    0\leq\varphi_0(z)-\max_{j\in J}\varphi_j(z)\leq\xi<\epsilon.
  \]

  Let $a_j(z)=\frac{\partial\chi}{\partial x_j}\left((\xi^{-1}\varphi_\ell(z))_{\ell\in J}\right)$, and note that we have already shown that $a_j(z)\geq 0$ for all $j\in J$ and $\sum_{j\in J}a_j(z)=1$.  Since $(\sqrt{a_j})_{j\in J}$ is a unit-length vector, the matrix $(I_{jk}-\sqrt{a_j}\sqrt{a_k})_{j,k\in J}$ is positive semi-definite on $\mathbb{R}^{|J|}$, where $I_{jk}$ denotes the identity matrix.  If we left-multiply and right-multiply this matrix by the diagonal matrix with diagonal entries $\sqrt{a_j}$, we see that the matrix $(a_j I_{jk}-a_j a_k)_{j,k\in J}$ is also positive semi-definite.  Given a family of $(1,0)$-forms $\{\theta_j\}_{j\in J}$, this implies the inequality
  \begin{equation}
  \label{eq:convexity_estimate}
    \sum_{j\in J}ia_j\theta_j\wedge\bar\theta_j\geq i\left(\sum_{j\in J}a_j\theta_j\right)\wedge\left(\sum_{k\in J}a_k\bar\theta_k\right).
  \end{equation}
  We compute
  \[
    \partial\varphi_0=\sum_{j\in J}a_j\partial\varphi_j.
  \]
  Let $b_{jk}(z)=\frac{\partial^2\chi}{\partial x_j\partial x_k}\left((\xi^{-1}\varphi_\ell(z))_{\ell\in J}\right)$.  Then $(b_{jk}(z))_{j,k\in J}$ is a positive semi-definite symmetric matrix.  We compute
  \[
    \ddbar\varphi_0=\sum_{j\in J}a_j\ddbar\varphi_j+\xi^{-1}\sum_{j,k\in J}b_{jk}\partial\varphi_j\wedge\dbar\varphi_k.
  \]
  Since $a_j\geq 0$ for all $j\in J$, $a_j>0$ for at least one $j\in J$, $(b_{jk}(z))_{j,k\in J}$ is positive semi-definite, and \eqref{eq:weight_estimate} holds for all $j\in J$, we have
  \[
    i\ddbar\varphi_0>\sum_{j\in J}a_j\left(-2\beta+i\frac{\eta}{1-\eta}(\partial\varphi_j-2\pi_{1,0}\alpha)\wedge(\dbar\varphi_j-2\pi_{0,1}\alpha)\right).
  \]
  Using $\sum_{j\in J}a_j=1$ and \eqref{eq:convexity_estimate} with $\theta_j=\partial\varphi_j-2\pi_{1,0}\alpha$, we see that $\varphi_0$ must also satisfy \eqref{eq:weight_estimate}.
\end{proof}

Now, we are ready to construct weight functions near admissible leaves of dimension $m>0$.

\begin{lem}
\label{lem:weight_near_leaf}
  Let $\Omega\subset\mathbb{C}^n$ be a smooth, bounded, pseudoconvex domain admitting a family of good vector fields.  If $K\subset\partial\Omega$ is the set of infinite type points, let $V\subset K$ be an admissible leaf of $K$ of dimension $m$ for some $0\leq m\leq n-1$, $0<\eta<1$, $A>0$, and $B>A+2m\log C$ where $C$ is the constant given in Lemma \ref{lem:h_V_construction}.  Then for any neighborhood $\mathcal{O}$ of $\bar V$ there exists a neighborhood $U_V\subset\mathcal{O}$ of $\bar V$ and a smooth, real-valued function $\varphi_V$ on $\overline{U_V}$ such that $\varphi_V\geq A$ on $\bar V$, $\varphi_V\leq -A$ on $\partial U_V\cap K$, and on $U_V$ we have $\abs{\varphi_V}\leq B$ and
  \begin{equation}
  \label{eq:weight_near_leaf}
    i\ddbar\varphi_V+2\beta-i\frac{\eta}{1-\eta}(\partial\varphi_V-2\pi_{1,0}\alpha)\wedge(\dbar\varphi_V-2\pi_{0,1}\alpha)>0.
  \end{equation}
\end{lem}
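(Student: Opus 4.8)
The plan is to argue by induction on the dimension $m$ of the leaf $V$. The base case $m=0$ is precisely Lemma \ref{lem:weight_near_point}, since $B>A+2m\log C$ then reads $B>A$; one simply takes $r>0$ small enough that $B(p,r)\subset\mathcal{O}$. So assume $m\geq 1$, and that the conclusion holds for every admissible leaf of dimension strictly less than $m$.

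For the interior of $V$, I would combine Lemma \ref{lem:h_V_construction} with the construction of Lemma \ref{lem:weight_near_point} performed in the fiber variables $w=f(z)$, where $f:U\rightarrow\mathbb{C}^{n-m}$ is the holomorphic map of Definition \ref{defn:leaf_m}. Fix a small $\eps>0$, take $h_{V,\eps}$ from Lemma \ref{lem:h_V_construction}, and choose $\zeta$, $M$, $R_{M,r}$, and $\lambda_{M,r}$ as in the proof of Lemma \ref{lem:weight_near_point} but relative to the weights provided by Definition \ref{defn:leaf_m}\eqref{item:leaf_m_property_p}. With $A'=A+\log C$ and with $B'>A'$ to be fixed, set
\[
  \varphi_V^{\mathrm{int}}=2h_{V,\eps}-\zeta^{-1}\log\left(e^{-A'\zeta}+(e^{A'\zeta}-e^{-B'\zeta})R_{M,r}^{-2}\abs{f}^2-(e^{-A'\zeta}-e^{-B'\zeta})\lambda_{M,r}\circ f\right).
\]
The summand $2h_{V,\eps}$ is there so that, by \eqref{eq:h_first_derivative} and \eqref{eq:h_second_derivative}, it contributes exactly the relations among $i\ddbar\varphi_V^{\mathrm{int}}$, $\beta$, and $\pi_{1,0}\alpha$ required by \eqref{eq:weight_near_leaf} in directions tangent to $\tilde{V}$ (the remaining summands are functions of $f$, hence have vanishing first and second derivatives along $\tilde{V}$ because $f\equiv 0$ there); the $\abs{f}^2$ and $\lambda_{M,r}\circ f$ terms are there so that, after the exponential--logarithm bookkeeping already carried out in the proof of Lemma \ref{lem:weight_near_point}, their joint Hessian dominates $\beta$, the $\pi_{1,0}\alpha\wedge\pi_{0,1}\alpha$ error term, and the transverse and mixed second derivatives of $h_{V,\eps}$ (which are bounded once $\eps$ is fixed) in directions transverse to $\tilde{V}$, after shrinking $r$ and enlarging $M$. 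Because $f\equiv 0$ on $\bar V$, on $\bar V\cap V_\eps$ one has $\varphi_V^{\mathrm{int}}\geq 2h_{V,\eps}+A'\geq A$ from $\abs{h_{V,\eps}}\leq\tfrac12\log C$, while $\varphi_V^{\mathrm{int}}$ decays below $-A$ as $\abs f$ grows, exactly as $\varphi_p$ decays with $\abs{z-p}$; and $\abs{\varphi_V^{\mathrm{int}}}\leq\log C+B'$, which we may keep just below $A+2\log C$ by taking $B'$ close to $A'$. This function is valid only on a neighborhood of $V_\eps$, since that is where \eqref{eq:h_first_derivative}--\eqref{eq:h_second_derivative} are available.

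The neighborhood of $\bar V\backslash V$ is handled inductively: by Definition \ref{defn:leaf_m}\eqref{item:leaf_m_boundary}, $\bar V\backslash V$ has finitely many connected components, each the closure of an admissible leaf $V_i$ of some dimension $m_i<m$, and I apply Lemma \ref{lem:weight_near_leaf} (inside $\mathcal{O}$) to each to obtain a neighborhood $U_{V_i}$ of $\overline{V_i}$ and a weight $\varphi_{V_i}$ satisfying \eqref{eq:weight_near_leaf}, choosing $A_i$ slightly larger than $A+2\log C$ and $B_i$ slightly larger than $A_i+2m_i\log C$; since $m_i+1\leq m$, both $A_i$ and $B_i$ can be kept strictly below $B$. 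One then glues $\varphi_V^{\mathrm{int}}$ to the $\varphi_{V_i}$ via Lemma \ref{lem:patching}. Its key hypothesis --- on the boundary of each piece's domain some other piece has a strictly larger weight --- is arranged by choosing $\eps$ small (so that a neighborhood of $V_\eps$ together with the $U_{V_i}$ covers $\bar V$, and so that on the overlap near $\overline{V_i}$ one has $\varphi_{V_i}$ close to $A_i>A+2\log C$ while $\varphi_V^{\mathrm{int}}$ lies in $[A,A+2\log C]$) and by arranging the constants $A_i$ in an increasing cascade. The resulting $\varphi_V$ satisfies \eqref{eq:weight_near_leaf} on its domain $U_V\subset\mathcal{O}$, agrees with some $\varphi_{V_i}$ near each point of $\partial U_V\cap K$ (hence is $\leq -A$ there), is $\geq A$ on $\bar V$ (agreeing with $\varphi_V^{\mathrm{int}}$ on $V_\eps$ and with $\varphi_{V_i}\geq A_i>A$ near $\bar V\backslash V$), and is bounded by $\max\{A+2\log C,\max_i B_i\}<B$; unwinding the recursion shows each level of the induction adds at most $2\log C$ to this bound, which is exactly why $B>A+2m\log C$ is the right hypothesis.

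The step I expect to be the main obstacle is the interior construction and the verification of \eqref{eq:weight_near_leaf} for $\varphi_V^{\mathrm{int}}$: one must show that the large transverse Hessian guaranteed by Definition \ref{defn:leaf_m}\eqref{item:leaf_m_property_p} genuinely dominates every error term uniformly on a full neighborhood of $V_\eps$ --- in particular the second derivatives of the Whitney extension $h_{V,\eps}$ in directions not tangent to $\tilde V$, together with the $O(\dist(\cdot,V_\eps))$ defect arising because \eqref{eq:h_first_derivative}--\eqref{eq:h_second_derivative} hold only on $V_\eps$ itself --- and that the quantities $\eps$, $M$, $r$, and the neighborhoods are chosen in the correct order so that this is not circular. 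A secondary, bookkeeping obstacle is to organize the constants $A_i$, $B_i$ over the finitely many boundary leaves so that the hypotheses of Lemma \ref{lem:patching} all hold at once and the output bound is exactly $B>A+2m\log C$.
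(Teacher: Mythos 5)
Your overall strategy is the same as the paper's: induct on the dimension of the leaf, build an interior weight of the shape $2h_{V,\eps}-\zeta^{-1}\log(\cdots)$ using Lemma \ref{lem:h_V_construction} and the fiber weight from Definition \ref{defn:leaf_m}\eqref{item:leaf_m_property_p}, invoke the induction hypothesis on the finitely many boundary leaves, and glue with Lemma \ref{lem:patching}. However, there is a genuine gap in your interior construction. Your $\varphi_V^{\mathrm{int}}$ has the log-term depending \emph{only} on $f$ (through $\abs{f}^2$ and $\lambda_{M,r}\circ f$). Since $f\equiv 0$ on $\tilde V$ and $f$ is holomorphic, every first and second derivative of that log-term vanishes when restricted to $T^{1,0}_p(\tilde V)\wedge T^{0,1}_p(\tilde V)$ at $p\in V_\eps$. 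Then by \eqref{eq:h_first_derivative} and \eqref{eq:h_second_derivative} the left side of \eqref{eq:weight_near_leaf}, restricted to $T^{1,0}_p(\tilde V)\wedge T^{0,1}_p(\tilde V)$, is exactly $2i\ddbar h_{V,\eps}+2\beta-0 = -2\beta+2\beta = 0$, not $>0$. You cannot repair this by making $\lambda_{M,r}$ larger, because those terms contribute nothing in directions tangent to $\tilde V$; the deficit is not about dominating error terms but about a structural degeneracy. The paper fixes this by including an ambient strictly plurisubharmonic term $D^{-2}\abs{z}^2$ inside the $\log$ alongside $\lambda_{M,r}\circ f$ and $\abs{f}^2$; its Hessian survives restriction to $T(\tilde V)$ and is precisely what makes $\Theta(r)$ uniformly positive on tangential directions. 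Your argument needs the analogous term.

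A secondary, smaller problem is the constant bookkeeping. You claim $\abs{\varphi_V^{\mathrm{int}}}\leq\log C+B'$ can be kept ``just below $A+2\log C$'' by taking $B'$ close to $A'=A+\log C$; but then $\log C+B'>\log C+A'=A+2\log C$, so the bound actually lands just \emph{above} $A+2\log C$, and your proposed thresholds $A_i$ slightly above $A+2\log C$ for the boundary leaves no longer strictly dominate $\varphi_V^{\mathrm{int}}$ near $\overline{V_i}$. The paper resolves this by fixing $\tilde A,\tilde B$ with $A+2\log C<\tilde B<\tilde A<B-2(m-1)\log C$, introducing an extra parameter $E$ with $1<E<\frac{\tilde B-A}{2\log C}$, and arranging the interior weight to satisfy $\abs{\varphi_\eps}\leq\tilde B$ while the boundary weights exceed $\tilde A>\tilde B$ on the leaves; your scheme can be repaired along these lines but as written the inequalities do not close.

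The inductive handling of the boundary components, and the use of Lemma \ref{lem:patching}, are both correct in outline and match the paper; the precise definition of the open sets $\mathcal{O}_j$, $\mathcal{O}_\eps$, and the verification of the strict-domination hypothesis of the patching lemma require the same careful but mechanical work the paper carries out.
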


\begin{proof}
  When $m=0$, this follows immediately from Lemma \ref{lem:weight_near_point}.  We assume that $m>0$ and Lemma \ref{lem:weight_near_leaf} has already been proven for all $0\leq m'<m$.  Fix $\tilde{A}>0$ and $\tilde{B}>0$ such that $A+2\log C<\tilde{B}<\tilde{A}<B-2(m-1)\log C$.  Let $\{V_j\}_{j\in J}$ denote the set of all connected components of $\bar V\backslash V$, and apply Lemma \ref{lem:weight_near_leaf} to each component to obtain a set of neighborhoods $\{U_{j}\}$ and functions $\{\varphi_{j}\}$ such that $\abs{\varphi_{j}}<B$ on $\overline{U_{j}}$, $\varphi_{j}\geq\tilde{A}$ on $\bar V_j$, and $\varphi_{j}\leq-\tilde{A}$ on $\partial U_{j}\cap K$.  Since each $U_j$ can be chosen arbitrarily small, we may assume that the sets $\{\overline{U_j}\}_{j\in J}$ are disjoint.

  Choose $\eps>0$ sufficiently small so that $z\in U_{j}$ and $\varphi_{j}(z)>\frac{1}{2}(\tilde{A}+\tilde{B})$ whenever $\dist(z,V_j)<\eps$, and let $h_{\eps}$ be given by Lemma \ref{lem:h_V_construction}.  To minimize extraneous subscripts, we suppress the $V$ in the subscript for $h$.

  Let $f$ be given by Definition \ref{defn:leaf_m}.  Fix $D>0$ so that $|z|<D$ on $\overline{V}$.  Fix constants $E>0$, $\zeta>0$, and $M>0$ satisfying $\frac{\tilde{B}-A}{2\log C}>E>1$, ${\zeta}>\frac{\eta}{1-\eta}$, and $M>\frac{2C^{E\zeta}(e^{A\zeta}-e^{-\tilde{B}\zeta})}{C^{-E\zeta}e^{-A\zeta}-C^{E\zeta}e^{-\tilde{B}\zeta}}$.  For any $r>0$, set
  \begin{multline*}
    \Theta(r)=
    2i\ddbar h_{\eps}+2\beta-i\frac{4\eta\zeta}{(1-\eta)\zeta-\eta}(\partial h_{\eps}-\pi_{1,0}\alpha)\wedge(\dbar h_{\eps}-\pi_{0,1}\alpha)\\
    +\frac{1}{2}\zeta^{-1}e^{-\tilde{B}\zeta}(C^{-2E\zeta}e^{-A\zeta}-e^{-\tilde{B}\zeta})D^{-2}i\ddbar\abs{z}^2\\
    +\zeta^{-1}e^{-\tilde{B}\zeta}\left(\frac{1}{2}(C^{-2E\zeta}e^{-A\zeta}-e^{-\tilde{B}\zeta})M-(e^{A\zeta}-e^{-\tilde{B}\zeta})\right) r^{-2}i\ddbar|f(z)|^2.
  \end{multline*}
  If $\tilde{V}$ is the minimal analytic variety containing $V$ and $p\in V_\eps$, then $\partial f(\tau)=0$ for every $\tau\in T_p^{1,0}(\tilde{V})$.  By \eqref{eq:h_first_derivative} and \eqref{eq:h_second_derivative}, we have
  \[
    \Theta(r)|_{T_p^{1,0}(\tilde{V})\wedge T_p^{0,1}(\tilde{V})}=\frac{1}{2}\zeta^{-1}e^{-\tilde{B}\zeta}(C^{-2E\zeta}e^{-A\zeta}-e^{-\tilde{B}\zeta})D^{-2}i\ddbar\abs{z}^2,
  \]
  so $\Theta(r)$ is uniformly positive definite in tangential directions.  Similarly, if $\nu\in T^{1,0}_p(\mathbb{C}^n)$ is in the orthogonal complement of $T^{1,0}(\tilde{V})$, then $\abs{\Theta(r)(\tau\wedge\bar\nu)}$ is uniformly bounded by a constant that is independent of $r$.  Finally, $i\ddbar\abs{f(z)}^2(\nu\wedge\bar\nu)\geq iF\ddbar\abs{z}^2(\nu\wedge\bar\nu)$ for some constant $F>0$ that is independent of $p\in V_\eps$ and $\nu\in T^{1,0}_p(\mathbb{C}^n)$ that is orthogonal to $T^{1,0}_p(\tilde{V})$.  Note that $F$ may depend on $\eps$ if $\{\partial f_1,\ldots,\partial f_{n-m}\}$ is not a linearly independent set on $\bar V\backslash V$, but this dependence will not be relevant since we have already fixed $\eps$.  Hence, we may choose $r>0$ sufficiently small so that $\Theta(r)$ is positive definite on $V_\eps$.  Since $V_\eps$ is closed, there exists a neighborhood $U_\eps\subset B(0,D)$ of $V_\eps$ on which $\Theta(r)>0$.  We may further require that $\overline{U_\eps}\subset U$, where $U$ is given by Definition \eqref{defn:leaf_m}.  Since Lemma \ref{lem:h_V_construction} implies $\abs{h_{\eps}}\leq\frac{1}{2}\log C$, we may assume that $U_\eps$ has been chosen sufficiently small so that $\abs{h_{\eps}}<\frac{E}{2}\log C$ on $U_\eps$.

  Suppose $z\in K\cap\partial U_\eps$ satisfies $f(z)=0$ and $\varphi_{j}(z)\leq\frac{1}{2}(\tilde{A}+\tilde{B})$ if there exists $j\in J$ such that $z\in U_j$.  Then $z\in\overline{V}$ by Definition \ref{defn:leaf_m} \eqref{item:leaf_m_topology}, but $\dist(z,\bar V\backslash V)\geq\eps$ since either $\varphi_{j}(z)\leq\frac{1}{2}(\tilde{A}+\tilde{B})$ or $z\notin U_j$ for all $j\in J$, so $z\in V_\eps$.  Hence $z\in V_\eps\cap\partial U_\eps$, but this is impossible since $U_\eps$ is a neighborhood of $V_\eps$, so we have a contradiction.  Consequently,
  \[
    \inf\left\{|f(z)|:z\in K\cap\partial U_\eps\text{ such that }\varphi_{j}(z)\leq\frac{1}{2}(\tilde{A}+\tilde{B})\text{ or }z\notin U_j\text{ for all }j\in J\right\}
  \]
  is strictly positive.  Since $\Theta(r)>0$ remains true when we decrease the size of $r>0$, we may assume that $r>0$ has been chosen sufficiently small so that if $z\in K\cap\partial U_\eps$ and $\varphi_{j}(z)\leq\frac{1}{2}(\tilde{A}+\tilde{B})$ whenever $z\in U_j$, then $|f(z)|\geq r$.

  Now that we have established values for $M$ and $r$, let $R_{M,r}$, $\lambda_{M,r}$, and $U_{M,r}$ be given by Definition \ref{defn:leaf_m} \eqref{item:leaf_m_property_p}.  Let $\dom\phi$ denote the set of $z\in U_\eps$ for which $f(z)\in U_{M,r}$, and define
  \begin{multline*}
    \phi(z)=\frac{1}{2}(C^{-E\zeta}e^{-A\zeta}-C^{E\zeta}e^{-\tilde{B}\zeta})(D^{-2}|z|^2+(\lambda_{M,r}\circ f)(z))\\
    -C^{E\zeta}(e^{A\zeta}-e^{-\tilde{B}\zeta}) R_{M,r}^{-2}|f(z)|^2-C^{-E\zeta}e^{-A\zeta}.
  \end{multline*}
  Note that our bounds on $E$ and $\tilde B$ imply that $C^{-E\zeta}e^{-A\zeta}-C^{E\zeta}e^{-\tilde{B}\zeta}>0$ and $e^{A\zeta}-e^{-\tilde{B}\zeta}>0$.  For $z\in\dom\phi$, $z\in U_\eps$, so we have $0\leq |z|<D$.  Since $z\in\dom\phi$ implies that $f(z)\in U_{M,r}$, we know that $\lambda_{M,r}(f(z))$ is well-defined, so we also make use of the estimate $0\leq\lambda_{M,r}\circ f\leq 1$.  Whenever $z\in\dom\phi$ satisfies $|f(z)|\leq R_{M,r}$ we have
  \[
    -C^{E\zeta}(e^{A\zeta}-e^{-\tilde{B}\zeta})-C^{-E\zeta}e^{-A\zeta}\leq\phi(z)\leq -C^{E\zeta}e^{-\tilde{B}\zeta}.
  \]
  If $z\in V_\eps$, then we know that $z\in\dom\phi$ and $|f(z)|=0$, so we have $\phi(z)\geq-C^{-E\zeta}e^{-A\zeta}$.  When $z\in\dom\phi$ satisfies $|f(z)|>R_{M,r}$ we have $\phi(z)<-C^{E\zeta}e^{A\zeta}$, and if $|f(z)|$ is uniformly bounded above $R_{M,r}$ then $\phi(z)$ will be uniformly bounded below $-C^{E\zeta}e^{A\zeta}$.  On $\dom\phi$, we also define
  \[
    \varphi_{\eps}(z)=2h_{\eps}(z)-{\zeta}^{-1}\log(-\phi(z)).
  \]
  Using our estimates for $\phi$ and the fact that $|h_\eps|<\frac{E}{2}\log C$ on $U_\eps\subset\dom\phi$, we see that when $z\in\dom\phi$ and $|f(z)|\leq R_{M,r}$ we have
  \[
    -E\log C-{\zeta}^{-1}\log(C^{E\zeta}(e^{A\zeta}-e^{-\tilde{B}\zeta})+C^{-E\zeta}e^{-A\zeta})\leq\varphi_{\eps}(z)\leq \tilde{B},
  \]
  but then $\abs{\varphi_{\eps}(z)}\leq \tilde{B}$ since $C^{-E\zeta}e^{-A\zeta}+C^{E\zeta}e^{A\zeta}<C^{-E\zeta}e^{\tilde{B}\zeta}+C^{E\zeta}e^{-\tilde{B}\zeta}$ when $\tilde{B}-E\log C>A+E\log C$.  On $V_\eps$, we have $\varphi_{\eps}(z)\geq A$.  When $z\in\dom\phi$ and $|f(z)|>R_{M,r}$, we have $\varphi_{\eps}(z)<-A$, and if $|f(z)|$ is uniformly bounded above $R_{M,r}$ then $\varphi_\eps(z)$ will be uniformly bounded below $-A$.

  By assumption,
  \begin{multline*}
    i\ddbar\phi\geq\frac{1}{2}(C^{-E\zeta}e^{-A\zeta}-C^{E\zeta}e^{-\tilde{B}\zeta})(D^{-2}i\ddbar\abs{z}^2+R_{M,r}^{-2}iM\ddbar\abs{f(z)}^2)\\
    -C^{E\zeta}(e^{A\zeta}-e^{-\tilde{B}\zeta}) R_{M,r}^{-2}i\ddbar|f(z)|^2,
  \end{multline*}
  so our choice of $M$ guarantees that $\phi$ is strictly plurisubharmonic.  Since $\phi=-e^{\zeta(2h_{\eps}-\varphi_{\eps})}$ and
  \begin{multline*}
    i\ddbar(-e^{\zeta(2h_{\eps}-\varphi_{\eps})})=e^{\zeta(2h_{\eps}-\varphi_{\eps})}\zeta(i\ddbar\varphi_{\eps}-2i\ddbar h_{\eps})\\
    -e^{\zeta(2h_{\eps}-\varphi_{\eps})}i\zeta^2(\partial\varphi_{\eps}-2\partial h_{\eps})\wedge(\dbar\varphi_{\eps}-2\dbar h_{\eps}),
  \end{multline*}
  we have
  \begin{multline*}
    i\ddbar\varphi_{\eps}\geq 2i\ddbar h_{\eps}\\
    +\frac{1}{2}\zeta^{-1}e^{\zeta(\varphi_{\eps}-2h_{\eps})}(C^{-E\zeta}e^{-A\zeta}-C^{E\zeta}e^{-\tilde{B}\zeta})(D^{-2}i\ddbar\abs{z}^2+R_{M,r}^{-2}iM\ddbar\abs{f(z)}^2)\\
    -\zeta^{-1}e^{\zeta(\varphi_{\eps}-2h_{\eps})}C^{E\zeta}(e^{A\zeta}-e^{-\tilde{B}\zeta}) R_{M,r}^{-2}i\ddbar|f(z)|^2\\
    +i\zeta(\partial\varphi_{\eps}-2\partial h_{\eps})\wedge(\dbar\varphi_{\eps}-2\dbar h_{\eps}).
  \end{multline*}
  Since
  \begin{multline*}
    i\left(\left(\zeta-\frac{\eta}{1-\eta}\right)(\partial\varphi_{\eps}-2\partial h_{\eps})+\frac{2\eta}{1-\eta}(\pi_{1,0}\alpha-\partial h_\eps)\right)\\
    \wedge\left(\left(\zeta-\frac{\eta}{1-\eta}\right)(\dbar\varphi_{\eps}-2\dbar h_{\eps})+\frac{2\eta}{1-\eta}(\pi_{0,1}\alpha-\dbar h_\eps)\right) \geq 0,
  \end{multline*}
  we may rearrange terms using the identity $\left(\frac{\eta}{1-\eta}\right)^2=\left(\frac{\eta {\zeta}}{(1-\eta){\zeta}-\eta}-\frac{\eta}{1-\eta}\right)\left(\zeta-\frac{\eta}{1-\eta}\right)$ and divide by $\left(\zeta-\frac{\eta}{1-\eta}\right)$ to obtain
  \begin{multline*}
    i\zeta(\partial\varphi_{\eps}-2\partial h_{\eps})\wedge(\dbar\varphi_{\eps}-2\dbar h_{\eps})\geq i\frac{\eta}{1-\eta}(\partial\varphi_{\eps}-2\pi_{1,0}\alpha)\wedge(\dbar\varphi_{\eps}-2\pi_{0,1}\alpha)\\
    -i\frac{4\eta\zeta}{(1-\eta)\zeta-\eta}(\partial h_{\eps}-\pi_{1,0}\alpha)\wedge(\dbar h_{\eps}-\pi_{0,1}\alpha).
  \end{multline*}
  Hence,
  \begin{multline*}
    i\ddbar\varphi_{\eps}+2\beta-i\frac{\eta}{1-\eta}(\partial\varphi_{\eps}-2\pi_{1,0}\alpha)\wedge(\dbar\varphi_{\eps}-2\pi_{0,1}\alpha)\geq\\
    2i\ddbar h_{\eps}+2\beta-i\frac{4\eta\zeta}{(1-\eta)\zeta-\eta}(\partial h_{\eps}-\pi_{1,0}\alpha)\wedge(\dbar h_{\eps}-\pi_{0,1}\alpha)\\
    +\frac{1}{2}\zeta^{-1}e^{\zeta(\varphi_{\eps}-2h_{\eps})}(C^{-E\zeta}e^{-A\zeta}-C^{E\zeta}e^{-\tilde{B}\zeta})(D^{-2}i\ddbar\abs{z}^2+R_{M,r}^{-2}iM\ddbar\abs{f(z)}^2)\\
    -\zeta^{-1}e^{\zeta(\varphi_{\eps}-2h_{\eps})}C^{E\zeta}(e^{A\zeta}-e^{-\tilde{B}\zeta}) R_{M,r}^{-2}i\ddbar|f(z)|^2.
  \end{multline*}
  Since $e^{\zeta(\varphi_{\eps}-2h_{\eps})}>C^{-E\zeta}e^{-\tilde B\zeta}$ and $R_{M,r}<r$, this is bounded below by $\Theta(r)$, which is assumed to be positive definite on $U_\eps$.

  Fix $R>0$ satisfying $R_{M,r}<R<r$.  We may assume that $R>0$ has been chosen sufficiently small so that whenever $z\in K\cap U_\eps$ and $|f(z)|\leq R$, we have $f(z)\in U_{M,r}$ (by definition of $U_{M,r}$) and $\abs{\varphi_\eps(z)}\leq\frac{1}{4}(\tilde{A}+3\tilde{B})$ (since $|\varphi_\eps(z)|\leq\tilde{B}$ when $z\in\dom\phi$ and $|f(z)|\leq R_{M,r}$).  When $z\in\dom\phi$ and $|f(z)|>R_{M,r}$, we know $\varphi_\eps(z)<-A$.  By construction, if we apply the uniform bound $|f(z)|\geq R$ for $z\in\dom\phi$, we obtain a uniform bound $\varphi_\eps(z)\leq -A_R$ for some $A_R>A$. For each $j\in J$, we define
  \[
    \mathcal{O}_j=\set{z\in U_j:|f(z)|<R\text{ or }\varphi_j(z)>-\frac{1}{2}(A_R+A)}.
  \]
  For a constant $\sigma>0$ to be determined later, we define
  \[
    \mathcal{O}_\eps=\set{z\in\dom\phi:|f(z)|<R\text{ and }\dist(\partial U_\eps,z)>\sigma}.
  \]
  Recall that $r>0$ was chosen sufficiently small so that if $|f(z)|<r$ then either $z\notin K\cap\partial U_\eps$ or there exists $j\in J$ such that $z\in U_j$ and $\varphi_j(z)>\frac{1}{2}(\tilde A+\tilde B)$.  Consequently,
  \[
    \set{z\in K\cap\partial U_\eps:|f(z)|\leq R}\subset\bigcup_{j\in J}\set{z\in U_j:\varphi_j(z)>\frac{1}{2}(\tilde A+\tilde B)}.
  \]
  Hence, we may choose $\sigma>0$ sufficiently small so that
  \begin{equation}
  \label{eq:K_boundary_subset}
    \set{z\in K\cap\partial\mathcal{O}_\eps:|f(z)|<R}\subset\bigcup_{j\in J}\set{z\in \mathcal{O}_j:\varphi_j(z)>\frac{1}{2}(\tilde A+\tilde B)}.
  \end{equation}
  Now we may define
  \[
    \tilde U_V=\mathcal{O}_\eps\cup\bigcup_{j\in J}\mathcal{O}_j,
  \]
  and note that this is necessarily a neighborhood of $\bar V$.  From \eqref{eq:K_boundary_subset}, we have
  \[
    K\cap\partial\tilde{U}_V\subset\{z\in\overline{\mathcal{O}_\eps}:|f(z)|=R\}\cup\bigcup_{j\in J}\partial \mathcal{O}_j.
  \]
  If $z\in K\cap\partial\tilde{U}_V$ and $z\in\partial\mathcal{O}_\eps$, then \eqref{eq:K_boundary_subset} implies that $|f(z)|=R$, so $\varphi_\eps(z)<-A$.  On the other hand, if $z\in K\cap\partial\tilde{U}_V$ and $z\in\partial \mathcal{O}_j$ for some $j\in J$, then there are two possibilities. Either $z\in \partial U_j$ as well, in which case $\varphi_j(z)\leq-\tilde{A}<-A$, or $z\in U_j\backslash\mathcal{O}_j$, in which case $\varphi_j(z)\leq -\frac{1}{2}(A_R+A)<-A$ by construction.

  Our goal is to use Lemma \ref{lem:patching} to define $\varphi_V$ on the closure of
  \[
    U_V=\set{z\in\tilde{U}_V:\dist(K,z)<\xi\text{ and }\dist(\partial\tilde U_V,z)>\xi},
  \]
  where $\xi>0$ is a small constant to be determined later.  We assume that $\xi$ is at least small enough so that $\overline{V}\subset U_V$.  Furthermore, we may assume that $\xi$ is sufficiently small so that $\varphi_\eps<-A$ on $K\cap\partial U_V\cap\mathcal{O}_\eps$ and $\varphi_j<-A$ on $K\cap\partial U_V\cap\mathcal{O}_j$ for any $j\in J$.  Fix $j\in J$ and $z\in K\cap\partial \mathcal{O}_j\cap\mathcal{O}_\eps$.  Since this means $|f(z)|<R$, we must have $z\in\partial U_j$ as well, and hence $\varphi_j(z)\leq-\tilde{A}<-\frac{1}{4}(\tilde{A}+3\tilde{B})\leq\varphi_{\eps}(z)$.  Therefore, we may choose $\xi$ sufficiently small so that $\varphi_j<\varphi_\eps$ on $\overline{U_V}\cap\partial \mathcal{O}_j\cap\mathcal{O}_\eps$ for all $j\in J$.  Next, we fix $j\in J$ and $z\in K\cap\partial \mathcal{O}_\eps\cap \mathcal{O}_j$.  If $|f(z)|<R$, then \eqref{eq:K_boundary_subset} gives us $\varphi_{\eps}(z)\leq\frac{1}{4}(\tilde{A}+3\tilde{B})<\frac{1}{2}(\tilde{A}+\tilde{B})<\varphi_{j}(z)$. If $|f(z)|=R$, then $\varphi_\eps(z)\leq-A_R<-\frac{1}{2}(A_R+A)<\varphi_j(z)$.  Hence, we may choose $\xi$ sufficiently small so that $\varphi_\eps<\varphi_j$ on $\overline{U_V}\cap\mathcal{O}_j\cap\partial\mathcal{O}_\eps$ for all $j\in J$.

  For any $j\in J$, choose a neighborhood $\tilde U_j$ of $\overline{U_j}$ such that $\{\tilde U_j\}_{j\in J}$ is still disjoint, and let $\mathcal{O}_{j\eps}=U_V\cap\tilde U_j$.  We may now use Lemma \ref{lem:patching} to construct $\varphi_{j\eps}$ on $\mathcal{O}_{j\eps}$ by patching $\varphi_j$ on $\mathcal{O}_j$ and $\varphi_\eps$ on $\mathcal{O}_\eps$.  Since $\varphi_{j\eps}=\varphi_\eps$ on $\mathcal{O}_{j\eps}\backslash\overline{\mathcal{O}_j}$, we may define a smooth function on $\overline{U_V}$ by $\varphi_V=\varphi_{j\eps}$ on $\mathcal{O}_{j\eps}$ for every $j\in J$ and $\varphi_V=\varphi_\eps$ on $\overline{U_V}\backslash\bigcup_{j\in J}\mathcal{O}_{j\eps}$.
\end{proof}

\section{The Global Construction}
\label{sec:global_construction}

We first construct a weight function covering the set of infinite type points.

\begin{lem}
\label{lem:weight_near_K}
  Let $\Omega\subset\mathbb{C}^n$ be a smooth, bounded, pseudoconvex domain admitting a family of good vector fields.  If $K\subset\partial\Omega$ is the set of infinite type points, suppose that every $p\in K$ belongs to an admissible leaf of $K$.  If $0<\eta<1$ and $B>2(n-1)\log C$ where $C$ is the constant given in Lemma \ref{lem:h_V_construction}, then there exists a neighborhood $U_K$ of $K$ and a smooth, real-valued function $\varphi_K$ on $U_K$ such that $\abs{\varphi_K}\leq B$ on $U_K$ and
  \begin{equation}
  \label{eq:weight_near_K}
    i\ddbar\varphi_K+2\beta-i\frac{\eta}{1-\eta}(\partial\varphi_K-2\pi_{1,0}\alpha)\wedge(\dbar\varphi_K-2\pi_{0,1}\alpha)>0.
  \end{equation}
\end{lem}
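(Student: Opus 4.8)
The plan is to combine Lemma~\ref{lem:weight_near_leaf}, applied to all of the admissible leaves that cover $K$, with the patching device of Lemma~\ref{lem:patching}. Since $C>1$, the hypothesis $B>2(n-1)\log C$ lets us choose $A>0$ and $B'>0$ with
\[
  A+2(n-1)\log C<B'<B.
\]
Every admissible leaf $V$ of $K$ has some dimension $m$ with $0\le m\le n-1$, so $B'>A+2m\log C$ and Lemma~\ref{lem:weight_near_leaf} applies to $V$ with these $\eta$, $A$, and $B'$; taking the ambient neighborhood $\mathcal{O}$ in that lemma small enough that $\overline{\mathcal{O}}$ is a compact subset of the set on which $\delta$ (hence $\alpha$ and $\beta$) is smooth, we obtain a bounded neighborhood $U_V$ of $\bar V$ and a smooth real-valued $\varphi_V$ on $\overline{U_V}$ with $\varphi_V\ge A$ on $\bar V$, $\varphi_V\le -A$ on $\partial U_V\cap K$, $\abs{\varphi_V}\le B'$ on $U_V$, and \eqref{eq:weight_near_leaf} on $U_V$.

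Next I would pass to a finite subcover. By hypothesis each $p\in K$ lies in an admissible leaf $V_p$ of $K$; since $p\in\bar V_p$ and $\varphi_{V_p}\ge A$ there, the set $N_p:=\{z\in U_{V_p}:\varphi_{V_p}(z)>A/2\}$ is an open neighborhood of $p$, so $\{N_p\}_{p\in K}$ is an open cover of the compact set $K$ and we may choose $p_1,\dots,p_N$ with $K\subset W:=\bigcup_{i=1}^N N_{p_i}$. Write $\mathcal{O}_i:=U_{V_{p_i}}$ and $\varphi_i:=\varphi_{V_{p_i}}$, so that $\alpha$ and $\beta$ are smooth on the compact set $\overline{\mathcal{O}_i}$ and $\varphi_i$ satisfies \eqref{eq:weight_estimate} on $\mathcal{O}_i$. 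For each $i$ the set $\{z\in\partial\mathcal{O}_i:\varphi_i(z)\ge -A/2\}$ is compact and, by the bound $\varphi_i\le -A$ on $\partial\mathcal{O}_i\cap K$, disjoint from $K$, so it has positive distance $\delta_i>0$ from $K$ (with $\delta_i=+\infty$ if it is empty) and $\varphi_i<-A/2$ on $\{z\in\partial\mathcal{O}_i:\dist(z,K)<\delta_i\}$. Since $W$ is an open neighborhood of the compact set $K$, I may pick $\delta'>0$ with $\delta'<\min_i\delta_i$ and $\{z:\dist(z,K)<\delta'\}\subset W$, and set $U_K=\mathcal{O}_0:=\{z:\dist(z,K)<\delta'/2\}$; then $\overline{\mathcal{O}_0}$ is compact and contained in $W\subset\bigcup_i\mathcal{O}_i$.

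Finally I would invoke Lemma~\ref{lem:patching} with $J=\{1,\dots,N\}$, the data $(\mathcal{O}_i,\varphi_i)$, the set $\mathcal{O}_0$, and any $\epsilon$ with $0<\epsilon\le B-B'$. The inclusion $\overline{\mathcal{O}_0}\subset\bigcup_i\mathcal{O}_i$ holds, and if $z\in\partial\mathcal{O}_i\cap\overline{\mathcal{O}_0}$ then $\dist(z,K)\le\delta'/2<\delta_i$ forces $\varphi_i(z)<-A/2$, while $z\in\overline{\mathcal{O}_0}\subset W$ yields some $k$ with $z\in\mathcal{O}_k$ and $\varphi_k(z)>A/2>\varphi_i(z)$, which is exactly the domination hypothesis of Lemma~\ref{lem:patching}. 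That lemma then produces $\varphi_K:=\varphi_0\in C^\infty(\overline{\mathcal{O}_0})$ satisfying \eqref{eq:weight_near_K}, and from $0\le\varphi_0-\max_{\{i:z\in\mathcal{O}_i\}}\varphi_i<\epsilon$ on $\overline{\mathcal{O}_0}$ together with $-B'\le\varphi_i\le B'$ on $\mathcal{O}_i$ we get $-B\le -B'\le\varphi_K<B'+\epsilon\le B$, i.e.\ $\abs{\varphi_K}\le B$ on $U_K$. I expect the one genuinely delicate step to be the verification of the domination hypothesis along $\partial\mathcal{O}_i$: it is essential that Lemma~\ref{lem:weight_near_leaf} bounds $\varphi_V$ above by $-A$ on all of $\partial U_V\cap K$ rather than merely on $\bar V$, so that a continuity-and-compactness argument can make each local weight strictly smaller near the boundary of its own patch than the weight on whichever patch is genuinely large there; the rest is bookkeeping with the constants.
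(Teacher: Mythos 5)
Your proposal is correct and follows essentially the same approach as the paper: cover $K$ by the neighborhoods from Lemma~\ref{lem:weight_near_leaf}, pass to a finite subcover (using that $K$ is compact, which the paper justifies via D'Angelo's openness of the set of finite type points), and patch with Lemma~\ref{lem:patching}. The paper gives this as a one-sentence proof, whereas you have carefully spelled out the choice of constants $A<B'<B$ and the verification of the domination hypothesis along $\partial\mathcal{O}_i$, which is exactly the bookkeeping the paper leaves implicit.
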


\begin{rem}
  Note that Liu has recently shown that a condition closely related to \eqref{eq:weight_near_K} is both necessary and sufficient for the existence of a defining function $\rho$ such that $-(-\rho)^\eta$ is plurisubharmonic on $\Omega$ \cite{Liu17a}.  We have found it helpful to construct $\varphi_K$ so that \eqref{eq:weight_near_K} holds in all directions, but Liu has shown that it suffices to restrict this estimate to the weakly pseudoconvex directions.
\end{rem}

\begin{proof}
  Since $K$ is compact by a result of D'Angelo \cite{DAn82}, we may cover $K$ with a finite collection of neighborhoods $U_V$ given by Lemma \ref{lem:weight_near_leaf}, and apply Lemma \ref{lem:patching} to patch the functions $\varphi_V$ together to obtain a global function $\varphi_K$.
\end{proof}

With the points of infinite type under control, we can easily extend our construction to all of $\partial\Omega$.
\begin{lem}
\label{lem:weight_near_boundary}
  Let $\Omega\subset\mathbb{C}^n$ be a smooth, bounded, pseudoconvex domain and suppose that there exists a neighborhood $U_K$ of the set of infinite type points $K\subset\partial\Omega$ and a smooth, bounded, real-valued function $\varphi_K$ on $U_K$ satisfying \eqref{eq:weight_near_K} on $U_K$ for some $0<\eta<1$.  Then there exists a neighborhood $U$ of $\partial\Omega$ and a smooth, bounded, real-valued function $\varphi$ on $U$ such that
  \begin{equation}
  \label{eq:weight_near_boundary}
    i\ddbar\varphi+2\beta-i\frac{\eta}{1-\eta}(\partial\varphi-2\pi_{1,0}\alpha)\wedge(\dbar\varphi-2\pi_{0,1}\alpha)>0.
  \end{equation}
\end{lem}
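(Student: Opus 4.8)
The plan is to mimic the classical Boas--Straube argument that extends a weight function from the infinite-type set to all of $\partial\Omega$: off $K$ every boundary point is of finite type, Remark \ref{rem:finite_type} supplies local weight functions there, and these are glued to $\varphi_K$ using Lemma \ref{lem:patching}.

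First I would fix $A>0$ with $A>\sup_{U_K}\abs{\varphi_K}$ (possible since $\varphi_K$ is bounded) and pick some $B>A$. For each $q\in\partial\Omega\setminus K$, Remark \ref{rem:finite_type} says that $\{q\}$ is an admissible leaf of dimension $0$ of $\partial\Omega\cap\overline{B(q,R_q)}$ for $R_q>0$ small, so applying Lemma \ref{lem:weight_near_point} with these $A$, $B$ and with $r\le R_q$ yields a neighborhood $U_q\subset B(q,R_q)$ of $q$ and a smooth real $\varphi_q$ on $U_q$ satisfying \eqref{eq:weight_near_boundary} on $U_q$, with $\varphi_q(q)\ge A$, $\abs{\varphi_q}\le B$ on $U_q$, and $\varphi_q\le-A$ on $\partial U_q\cap\partial\Omega$ (since $\overline{U_q}\subset\overline{B(q,R_q)}$, so $\partial U_q\cap\partial\Omega$ lies in the compact set to which Remark \ref{rem:finite_type} refers). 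By continuity $\varphi_q>\tfrac A2$ on some ball $B(q,\rho_q)\subset U_q$. Then I would shrink $U_K$ to a neighborhood $U_K^0$ of $K$ with $\overline{U_K^0}\subset U_K$, use compactness of $\partial\Omega\setminus U_K^0$ to pick $q_1,\dots,q_N$ with $\partial\Omega\setminus U_K^0\subset\bigcup_iB(q_i,\rho_{q_i})$, and set $U_i=U_{q_i}$, $\varphi_i=\varphi_{q_i}$. At this stage $U_K,U_1,\dots,U_N$ cover $\partial\Omega$, \eqref{eq:weight_near_boundary} holds on each, $-A<\varphi_K<A$ throughout $U_K$, each $\varphi_i\le-A$ on $\partial U_i\cap\partial\Omega$, and $\varphi_i>\tfrac A2$ on $B(q_i,\rho_{q_i})$, these balls covering $\partial\Omega\setminus U_K^0$.

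It then remains to patch via Lemma \ref{lem:patching}. As in the proof of Lemma \ref{lem:weight_near_leaf}, I would shrink each $U_i$ to a domain $\mathcal{O}_i$ contained in a thin tube $\{\abs{\delta}<\tau\}$ about $\partial\Omega$ and cut down to a superlevel set $\{\varphi_i>-A+\epsilon_0\}$, and likewise cut $U_K$ down to $\mathcal{O}_K$ in the same tube; for $\tau$ and $\epsilon_0$ small these can be arranged so that $\overline{\mathcal{O}_i}\subset U_i$, $\overline{\mathcal{O}_K}\subset U_K$, and $\varphi_i<-\tfrac A2$ on $\partial\mathcal{O}_i$ (using that $\varphi_i\le-A$ along $\partial U_i\cap\partial\Omega$, so the superlevel set meets $\partial U_i$ only near the part lying over $\partial\Omega$). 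Then any $z\in\partial\mathcal{O}_i$ in the tube sits near a boundary point that is either in $U_K^0$, where $\varphi_K>-A>\varphi_i(z)$, or in $\partial\Omega\setminus U_K^0$, hence in some $B(q_k,\rho_{q_k})$ where $\varphi_k>\tfrac A2>\varphi_i(z)$; an analogous statement holds on $\partial\mathcal{O}_K$. Thus the hypotheses of Lemma \ref{lem:patching} are met with $\mathcal{O}_0=U$ any neighborhood of $\partial\Omega$ whose closure lies in $\mathcal{O}_K\cup\bigcup_i\mathcal{O}_i$, and the resulting $\varphi$ is smooth, bounded (a finite patch of functions bounded by $B$), and satisfies \eqref{eq:weight_near_boundary} on $U$. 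The main obstacle is precisely this last step: carrying out the shrinking so that each of $\varphi_i$ and $\varphi_K$ is small enough on its own boundary to be dominated there, i.e.\ verifying the boundary hypothesis of Lemma \ref{lem:patching}. This is routine but delicate bookkeeping, entirely parallel to the corresponding argument in the proof of Lemma \ref{lem:weight_near_leaf}; once the patch domains are in place, the weight estimate for $\varphi$ follows immediately from Lemma \ref{lem:weight_near_point}, Lemma \ref{lem:patching}, and the choice $A>\sup_{U_K}\abs{\varphi_K}$.
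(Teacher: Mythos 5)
Your approach is essentially the paper's: use Remark \ref{rem:finite_type} together with Lemma \ref{lem:weight_near_point} to produce local weight functions near each finite-type boundary point, reduce to a finite subcover by compactness of $\partial\Omega\setminus U_K^0$, and glue everything to $\varphi_K$ via Lemma \ref{lem:patching}. The paper's own proof is terser --- it fixes $B>A>\sup_{U_K}|\varphi_K|$, produces the local weights exactly as you do, takes a finite subcover from the sets $\{\varphi_p>\sup_{U_K}|\varphi_K|\}$ (which automatically dominate $\varphi_K$ where they overlap $U_K$), and then invokes Lemma \ref{lem:patching} --- so your elaboration of the tube-and-superlevel-set bookkeeping is a fuller account of the same construction. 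One small constant slip: you only require $A>\sup_{U_K}|\varphi_K|$ but then use the threshold $A/2$ on the balls $B(q,\rho_q)$ and need this to dominate $\varphi_K$ on $\partial\mathcal{O}_K$; that requires $A/2>\sup_{U_K}|\varphi_K|$, which does not follow from $A>\sup_{U_K}|\varphi_K|$ alone. Take $A>2\sup_{U_K}|\varphi_K|$, or (as the paper does) use the superlevel sets $\{\varphi_q>\sup_{U_K}|\varphi_K|\}$ directly as the cover; either way the fix is trivial and the structure of the argument is unchanged.
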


\begin{proof}
  Fix $B>A>\sup_{U_K}\abs{\varphi_K}$.  Using Lemma \ref{lem:weight_near_point} with Remark \ref{rem:finite_type}, observe that every $p\in\partial\Omega\backslash K$ admits a neighborhood $U_p$ and a weight function $\varphi_p$ such that $\varphi_p(p)\geq A$, $\varphi_p\leq-A$ on $\partial U_p\cap\partial\Omega$, $\abs{\varphi_p}<B$ on $\overline{U_p}$, and \eqref{eq:weight_near_point} holds.  Since the neighborhoods on which $\varphi_p>\sup_{U_K}\abs{\varphi_K}$ cover the compact set $\partial\Omega\backslash U_K$, we may reduce to a finite subcover and use Lemma \ref{lem:patching} to construct a global weight $\varphi$.
\end{proof}

\begin{lem}
\label{lem:DF_index}
  Let $\Omega\subset\mathbb{C}^n$ be a smooth, bounded, pseudoconvex domain and suppose that there exists a neighborhood $U$ of $\partial\Omega$ and a smooth, bounded, real-valued function $\varphi$ on $U$ satisfying \eqref{eq:weight_near_boundary} on $U$ for some $0<\eta<1$.  Then there exists a neighborhood $\tilde U$ of $\partial\Omega$ such that $-e^{-\eta\varphi}(-\delta)^\eta$ is strictly plurisubharmonic on $\Omega\cap\tilde{U}$.
\end{lem}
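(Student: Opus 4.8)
The plan is to exhibit $\rho:=e^{-\varphi}\delta$ as the desired defining function and verify the plurisubharmonicity of $-(-\rho)^{\eta}=-e^{-\eta\varphi}(-\delta)^{\eta}$ directly. Since $\varphi$ is bounded and smooth on $U$, $\rho$ is a smooth defining function on $U$ that is negative on $\Omega$. A direct computation gives the Diederich--Fornaess identity
\[
  i\ddbar\!\left(-(-\rho)^{\eta}\right)=\eta\,(-\rho)^{\eta-2}\Big((1-\eta)\,i\partial\rho\wedge\dbar\rho+(-\rho)\,i\ddbar\rho\Big),
\]
and since $\eta(-\rho)^{\eta-2}>0$ on $\Omega$, it suffices to produce a neighborhood $\tilde U\subset U$ of $\partial\Omega$ on which the $(1,1)$-form $\Xi:=(1-\eta)\,i\partial\rho\wedge\dbar\rho+(-\rho)\,i\ddbar\rho$ is positive definite at every point of $\Omega\cap\tilde U$.

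Expanding with $\rho=e^{-\varphi}\delta$ and dividing off the positive factor $e^{-2\varphi}$, one finds
\[
  e^{2\varphi}\,\Xi=(1-\eta)\,i\partial\delta\wedge\dbar\delta-\eta(-\delta)\big(i\partial\delta\wedge\dbar\varphi+i\partial\varphi\wedge\dbar\delta\big)+(-\delta)\,i\ddbar\delta+(-\delta)^{2}\big(i\ddbar\varphi-\eta\,i\partial\varphi\wedge\dbar\varphi\big).
\]
Fix $z\in\Omega\cap\tilde U$ with nearest boundary point $p$, and split $\mathbb C^{n}$ at $z$ into the complex normal line spanned by $\nu=(\partial\delta/\partial\bar z_{j})_{j}$ and the complex tangent space of the level hypersurface $\{\delta=\delta(z)\}$ (as a subspace of $\mathbb C^{n}$ this is $T^{1,0}_{p}(\partial\Omega)$, since $\nabla\delta$ is constant along normals). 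On the normal line the term $(1-\eta)\,i\partial\delta\wedge\dbar\delta$ is uniformly positive definite, so by a Schur complement (completing the square in the normal variable) positivity of $\Xi$ on $\Omega\cap\tilde U$ is equivalent to positivity of the resulting Hermitian form $S$ on $T^{1,0}_p(\partial\Omega)$; the factor $(1-\eta)^{-1}$ generated by this completion of the square is precisely what converts the coefficient $\eta$ into $\frac{\eta}{1-\eta}$. The normal--tangential cross terms fed into $S$ are handled using the identity $i\ddbar\delta(\,\cdot\,,\bar\nu)=\frac12\pi_{1,0}\alpha$, a consequence of $\partial|\partial\delta|^{2}=0$ and \eqref{eq:alpha_projection}, and this is exactly what introduces the shift $\partial\varphi\mapsto\partial\varphi-2\pi_{1,0}\alpha$.

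It then remains to estimate $S$. Its leading ($O(-\delta)$) term is $(-\delta)$ times the Levi form of $\partial\Omega$, which is nonnegative by pseudoconvexity and, off the Levi-null cone $\mathcal{N}_{p}(\partial\Omega)$, is bounded below by a positive multiple of $(-\delta)|v|^{2}$, dominating everything else once $\tilde U$ is thin. On $\mathcal{N}_{p}(\partial\Omega)$ the Levi form vanishes, so $i\ddbar\delta(v,\bar v)$ is $O(-\delta)$ there and one Taylor expands it about $p$ in the normal direction; the resulting $O(-\delta)$ derivative, restricted to $\mathcal{N}_{p}(\partial\Omega)$, is identified with $-2\beta$ via the computation behind \eqref{eq:alpha_derivative} (the genuine third derivatives of $\delta$ cancel, exactly as in the ``first term vanishes'' step of Section~\ref{sec:de_rham_cohomology}). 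Collecting, for $v\in\mathcal{N}_{p}(\partial\Omega)$ one obtains
\[
  (-\delta)^{-2}\,S(v,\bar v)=\Big(i\ddbar\varphi+2\beta-\frac{\eta}{1-\eta}\,i(\partial\varphi-2\pi_{1,0}\alpha)\wedge(\dbar\varphi-2\pi_{0,1}\alpha)\Big)(v,\bar v)+O(-\delta),
\]
and the bracket is $\geq c_{0}>0$ on $\overline U$ by \eqref{eq:weight_near_boundary} (shrink $U$ using compactness of $\partial\Omega$). Choosing $\tilde U$ thin enough that the $O(-\delta)$ errors are $<c_{0}/2$, interpolating between the Levi-positive and Levi-null regimes for general $v\in T^{1,0}_p(\partial\Omega)$, and using boundedness of $\varphi$ to keep $e^{-\varphi}$ and $e^{-\eta\varphi}$ under control, one concludes $\Xi>0$ on $\Omega\cap\tilde U$, i.e.\ $-e^{-\eta\varphi}(-\delta)^{\eta}$ is strictly plurisubharmonic there.

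The main obstacle is the bookkeeping in the previous paragraph: carrying out the Schur complement together with the normal Taylor expansion of $i\ddbar\delta$ carefully enough to reconstitute the exact combination $i\ddbar\varphi+2\beta-\frac{\eta}{1-\eta}\,i(\partial\varphi-2\pi_{1,0}\alpha)\wedge(\dbar\varphi-2\pi_{0,1}\alpha)$ — in particular tracking how the $|\pi_{1,0}\alpha|^{2}$-type terms coming from $\beta$, from the Schur correction, and from the cross terms combine — and handling uniformly, down to $\partial\Omega$, the complex tangential directions that are only ``almost'' Levi-null, where the Levi-form term is positive but too small to exploit and the positivity must come entirely from the $(-\delta)^{2}$-order part beating the $O((-\delta)^{3})$ errors.
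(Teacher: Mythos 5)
Your overall plan matches the paper's: set $\rho=e^{-\varphi}\delta$, reduce to positivity of a Hermitian $(1,1)$-form, split into complex-normal and complex-tangential components, and combine pseudoconvexity of $\partial\Omega$ with the hypothesis \eqref{eq:weight_near_boundary}. The Schur-complement elimination of the normal variable is algebraically equivalent to what the paper does (it verifies positivity by checking that the product of the diagonal entries beats the square of the off-diagonal, i.e.\ by the $2\times 2$ determinant test); that choice of presentation is not a substantive difference. Your displayed identity for $(-\delta)^{-2}S$ restricted to $\mathcal{N}_p(\partial\Omega)$ is, after one carries the algebra through, exactly right.

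The genuine gap is in how you propose to produce the $O(-\delta)$ term in $i\ddbar\delta$. You assert that, after Taylor expanding $i\ddbar\delta$ about $p=\xi(z)$ in the normal direction, ``the resulting $O(-\delta)$ derivative, restricted to $\mathcal{N}_p(\partial\Omega)$, is identified with $-2\beta$ via the computation behind \eqref{eq:alpha_derivative}.'' This is incorrect on two counts. First, \eqref{eq:alpha_derivative} computes $d^c\alpha$ restricted to $\mathcal{N}_p\wedge\overline{\mathcal{N}_p}$; it is a boundary-intrinsic statement about the exterior derivative of D'Angelo's $1$-form, not a computation of the normal derivative of the Hessian of $\delta$. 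Second, and more importantly, the correct value of that normal derivative on $\mathcal{N}_p(\partial\Omega)$ is not $-2\beta$ but $-2\beta(-i\tau\wedge\bar\tau)-4|\alpha(\tau)|^2$. The paper obtains this from a Weinstock-type integral representation \eqref{eq:Weinstock_integral}: differentiating the eikonal identity $\sum_j|\partial\delta/\partial z_j|^2=\frac14$ twice yields the exact formula for $\frac{d}{dt}\frac{\partial^2\delta}{\partial z_j\partial\bar z_k}$ along the normal, and contracting with $\tau\in\mathcal{N}_p$ produces two sums, one equal to $\beta(-i\tau\wedge\bar\tau)+|\alpha(\tau)|^2$ and the other, using \eqref{eq:nullspace_Levi_form} with $\mu=0$, equal to $|\alpha(\tau)|^2$. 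The extra $-4|\alpha(\tau)|^2$ is not an error term: it is precisely what the Schur correction $-\frac{1}{1-\eta}|(2\alpha-\eta\partial\varphi)(\tau)|^2$ needs to cancel against so that the surviving quantity is $i\ddbar\varphi+2\beta-\frac{\eta}{1-\eta}|\partial\varphi-2\pi_{1,0}\alpha|^2$ and not that expression minus $4|\alpha|^2$. With only $-2\beta$ as you wrote, the Schur complement would come out $4|\alpha(\tau)|^2$ too small, and the hypothesis \eqref{eq:weight_near_boundary} would not suffice. So your final display is correct but does not follow from the computation you cite for it.

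A secondary point, which removes the difficulty you flag at the end: you anticipate having to interpolate between Levi-positive and Levi-null tangential directions, worrying about directions where the Levi form is positive but tiny. The paper sidesteps this entirely. In the inequality \eqref{eq:levi_form_eigenvalue}, the eigenvalue contributions $\mu\,\ddbar|z|^2(\tau\wedge\bar\tau)$ and $2(-\delta)\mu^2\ddbar|z|^2(\tau\wedge\bar\tau)$ are nonnegative by pseudoconvexity and are simply \emph{discarded}, yielding a lower bound
\[
  \sum_{j,k}\tau^j\frac{\partial^2\delta}{\partial z_j\partial\bar z_k}\bar\tau^k\geq(-\delta)\bigl(2\beta(-i\tau\wedge\bar\tau)+4|\alpha(\tau)|^2\bigr)-O\bigl((-\delta)^2\bigr)
\]
valid uniformly for every eigenvector of the Levi form, not just those in the null space. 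The hypothesis on $\varphi$ is already formulated to hold in \emph{all} directions, so no case analysis is required; the $\eps$-margin in \eqref{eq:weight_near_boundary} supplies the uniform positivity needed to absorb the $O(-\delta)$ errors in the determinant estimate.
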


\begin{proof}
  Let $\rho=e^{-\varphi}\delta$.  In order for \eqref{eq:weight_near_boundary} to hold, we must assume that $U$ is sufficiently small so that $\delta$ is smooth on $U$.  By shrinking $U$, we may further assume that the strict inequality in \eqref{eq:weight_near_boundary} holds on $\overline{U}$.  On $U$, we compute
  \[
    d\rho=e^{-\varphi}(d\delta-\delta d\varphi)
  \]
  and
  \[
    i\ddbar\rho=ie^{-\varphi}(\ddbar\delta+(-\delta)\ddbar\varphi-\partial\varphi\wedge\dbar\delta-\partial\delta\wedge\dbar\varphi-(-\delta)\partial\varphi\wedge\dbar\varphi).
  \]
  Hence on $U\cap\Omega$,
  \begin{multline*}
    \Theta:=ie^{\varphi}(\ddbar\rho+(1-\eta)(-\rho)^{-1}\partial\rho\wedge\dbar\rho)=\\
    i(\ddbar\delta+(-\delta)\ddbar\varphi+(1-\eta)(-\delta)^{-1}\partial\delta\wedge\dbar\delta-\eta\partial\varphi\wedge\dbar\delta-\eta\partial\delta\wedge\dbar\varphi-\eta(-\delta)\partial\varphi\wedge\dbar\varphi).
  \end{multline*}
  Since $i\ddbar(-(-\rho)^\eta)=\eta(-\rho)^{\eta-1}e^{-\varphi}\Theta$, it suffices to show that $\Theta$ is positive definite on some internal neighborhood of $\partial\Omega$.

  Before proceeding, we derive a variation of the Weinstock formula \cite{Wei75}.  For $z\in U$, there exists a unique element $\xi(z)\in\partial\Omega$ satisfying $\abs{\xi(z)-z}=\dist(z,\partial\Omega)$ and $\xi(z)=z-2\delta(z)\frac{\partial\delta}{\partial\bar z}(z)$ (Theorem 4.8 (3) in \cite{Fed59}).  Hence,
  \begin{equation}
  \label{eq:Weinstock_integral}
    \frac{\partial^2\delta}{\partial z_j\partial\bar z_k}(z)=\frac{\partial^2\delta}{\partial z_j\partial\bar z_k}(\xi(z))-\int_0^{-\delta(z)}\frac{d}{dt}\frac{\partial^2\delta}{\partial z_j\partial\bar z_k}\left(z+2t\frac{\partial\delta}{\partial\bar z}(z)\right)dt.
  \end{equation}
  We have
  \begin{multline*}
    \frac{d}{dt}\frac{\partial^2\delta}{\partial z_j\partial\bar z_k}\left(z+2t\frac{\partial\delta}{\partial\bar z}(z)\right)=\sum_{\ell=1}^n2\frac{\partial\delta}{\partial\bar z_\ell}(z)\frac{\partial^3\delta}{\partial z_\ell\partial z_j\partial\bar z_k}\left(z+2t\frac{\partial\delta}{\partial\bar z}(z)\right)\\
    +\sum_{\ell=1}^n2\frac{\partial\delta}{\partial z_\ell}(z)\frac{\partial^3\delta}{\partial\bar z_\ell\partial z_j\partial\bar z_k}\left(z+2t\frac{\partial\delta}{\partial\bar z}(z)\right).
  \end{multline*}
  Since $\frac{\partial\delta}{\partial z}(z)=\frac{\partial\delta}{\partial z}\left(z+2t\frac{\partial\delta}{\partial\bar z}(z)\right)$ and
  \begin{multline*}
    0=\frac{\partial^2}{\partial z_j\partial\bar z_k}\sum_{\ell=1}^n\abs{\frac{\partial\delta}{\partial z_\ell}}^2=\sum_{\ell=1}^n\frac{\partial\delta}{\partial\bar z_\ell}\frac{\partial^3\delta}{\partial z_\ell\partial z_j\partial\bar z_k}+\sum_{\ell=1}^n\frac{\partial\delta}{\partial z_\ell}\frac{\partial^3\delta}{\partial\bar z_\ell\partial z_j\partial\bar z_k}\\
    +\sum_{\ell=1}^n\frac{\partial^2\delta}{\partial z_\ell\partial z_j}\frac{\partial^2\delta}{\partial\bar z_\ell\partial\bar z_k}+\sum_{\ell=1}^n\frac{\partial^2\delta}{\partial\bar z_\ell\partial z_j}\frac{\partial^2\delta}{\partial z_\ell\partial\bar z_k},
  \end{multline*}
  we obtain
  \begin{multline*}
    \frac{d}{dt}\frac{\partial^2\delta}{\partial z_j\partial\bar z_k}\left(z+2t\frac{\partial\delta}{\partial\bar z}(z)\right)=-2\sum_{\ell=1}^n\left(\frac{\partial^2\delta}{\partial z_\ell\partial z_j}\frac{\partial^2\delta}{\partial\bar z_\ell\partial\bar z_k}\right)\left(z+2t\frac{\partial\delta}{\partial\bar z}(z)\right)\\
    -2\sum_{\ell=1}^n\left(\frac{\partial^2\delta}{\partial\bar z_\ell\partial z_j}\frac{\partial^2\delta}{\partial z_\ell\partial\bar z_k}\right)\left(z+2t\frac{\partial\delta}{\partial\bar z}(z)\right).
  \end{multline*}
  For the sake of our estimates, it will suffice to note
  \begin{multline*}
    \frac{d}{dt}\frac{\partial^2\delta}{\partial z_j\partial\bar z_k}\left(z+2t\frac{\partial\delta}{\partial\bar z}(z)\right)\leq-2\sum_{\ell=1}^n\left(\frac{\partial^2\delta}{\partial z_\ell\partial z_j}\frac{\partial^2\delta}{\partial\bar z_\ell\partial\bar z_k}\right)(\xi(z))\\
    -2\sum_{\ell=1}^n\left(\frac{\partial^2\delta}{\partial\bar z_\ell\partial z_j}\frac{\partial^2\delta}{\partial z_\ell\partial\bar z_k}\right)(\xi(z))+O\left((-\delta(z)-t)\norm{\delta}_{C^3(U)}\norm{\delta}_{C^2(U)}\right).
  \end{multline*}
  Substituting in \eqref{eq:Weinstock_integral}, we have
  \begin{multline}
  \label{eq:Weinstock_estimate}
    \frac{\partial^2\delta}{\partial z_j\partial\bar z_k}(z)\geq\frac{\partial^2\delta}{\partial z_j\partial\bar z_k}(\xi(z))+2(-\delta(z))\sum_{\ell=1}^n\left(\frac{\partial^2\delta}{\partial z_\ell\partial z_j}\frac{\partial^2\delta}{\partial\bar z_\ell\partial\bar z_k}\right)(\xi(z))\\
    +2(-\delta(z))\sum_{\ell=1}^n\left(\frac{\partial^2\delta}{\partial\bar z_\ell\partial z_j}\frac{\partial^2\delta}{\partial z_\ell\partial\bar z_k}\right)(\xi(z))-O\left((-\delta(z))^2\norm{\delta}_{C^3(U)}\norm{\delta}_{C^2(U)}\right).
  \end{multline}

  Now, suppose $\tau$ is a unit length eigenvector of the Levi-form at $\xi(z)$ with eigenvalue $\mu$.  Then
  \[
    \sum_{j=1}^n\tau^j\frac{\partial^2\delta}{\partial z_j\partial\bar z_k}(\xi(z))=\mu\tau^k+4\sum_{j,\ell=1}^n\tau^j\left(\frac{\partial^2\delta}{\partial z_j\partial\bar z_\ell}\frac{\partial\delta}{\partial z_\ell}\frac{\partial\delta}{\partial\bar z_k}\right)(\xi(z)),
  \]
  where the final term represents the component of the complex Hessian which is not contained in the Levi-form.  However, if we view $\tau$ as a $(1,0)$ vector field, then this is simply
  \[
    \sum_{j=1}^n\tau^j\frac{\partial^2\delta}{\partial z_j\partial\bar z_k}(\xi(z))=\mu\tau^k+2\alpha(\tau)\frac{\partial\delta}{\partial\bar z_k}(\xi(z)).
  \]
  Using \eqref{eq:alpha_projection},
  \[
    \sum_{j,k,\ell=1}^n\tau^j\left(\frac{\partial^2\delta}{\partial z_\ell\partial z_j}\frac{\partial^2\delta}{\partial\bar z_\ell\partial\bar z_k}\right)\bar\tau^k=\beta(-i\tau\wedge\bar\tau)+\abs{\alpha(\tau)}^2.
  \]
  Since $\abs{\beta(\xi(z))-\beta(z)}<O\left((-\delta(z))\norm{\delta}_{C^3(U)}\norm{\delta}_{C^2(U)}\right)$ with a similar estimate for $\abs{\alpha(\xi(z))-\alpha(z)}$, these are admissible error terms, so we can use \eqref{eq:Weinstock_estimate} to obtain
  \begin{multline*}
    \sum_{j,k=1}^n\tau^j\frac{\partial^2\delta}{\partial z_j\partial\bar z_k}\bar\tau^k\geq\mu \ddbar\abs{z}^2(\tau\wedge\bar\tau)+2(-\delta(z))(\beta(-i\tau\wedge\bar\tau)+\abs{\alpha(\tau)}^2)\\
    +2(-\delta(z))\left(\mu^2 \ddbar\abs{z}^2(\tau\wedge\bar\tau)+\abs{\alpha(\tau)}^2\right)\\
    -O\left((-\delta(z))^2\norm{\delta}_{C^3(U)}\norm{\delta}_{C^2(U)}\right).
  \end{multline*}
  Pseudoconvexity guarantees that $\mu\geq 0$, so we can discard these terms and obtain
  \begin{multline}
  \label{eq:levi_form_eigenvalue}
    \sum_{j,k=1}^n\tau^j\frac{\partial^2\delta}{\partial z_j\partial\bar z_k}\bar\tau^k\geq (-\delta(z))(2\beta(-i\tau\wedge\bar\tau)+4\abs{\alpha(\tau)}^2)\\
    -O\left((-\delta(z))^2\norm{\delta}_{C^3(U)}\norm{\delta}_{C^2(U)}\right).
  \end{multline}

  Turning our attention to $\varphi$, we choose $\eps>0$ so that
  \[
    i\ddbar\varphi+2\beta-i\frac{\eta}{1-\eta}(\partial\varphi-2\pi_{1,0}\alpha)\wedge(\dbar\varphi-2\pi_{0,1}\alpha)\geq i\eps\ddbar\abs{z}^2
  \]
  on $U$.  Combining this with \eqref{eq:levi_form_eigenvalue}, we obtain
  \begin{multline*}
    \Theta(-i\tau\wedge\bar\tau)\geq(-\delta(z))\eps\ddbar\abs{z}^2(\tau\wedge\bar\tau)+4(-\delta(z))\abs{\alpha(\tau)}^2\\
    +\frac{\eta}{1-\eta}(-\delta(z))\abs{(\partial\varphi-2\alpha)(\tau)}^2-\eta(-\delta(z))\abs{\partial\varphi(\tau)}^2\\
    -O\left((-\delta(z))^2\norm{\delta}_{C^3(U)}\norm{\delta}_{C^2(U)}\right).
  \end{multline*}
  If we expand and simplify, this is equivalent to
  \begin{multline*}
    \Theta(-i\tau\wedge\bar\tau)\geq(-\delta(z))\eps \ddbar\abs{z}^2(\tau\wedge\bar\tau)
    +\frac{1}{1-\eta}(-\delta(z))\abs{(\eta\partial\varphi-2\alpha)(\tau)}^2\\
    -O\left((-\delta(z))^2\norm{\delta}_{C^3(U)}\norm{\delta}_{C^2(U)}\right).
  \end{multline*}
  Hence,
  \begin{multline}
  \label{eq:Theta_tangential_estimate}
    \Theta(-i\tau\wedge\bar\tau)-(-\delta(z))\frac{\eps}{2}\ddbar\abs{z}^2(\tau\wedge\bar\tau)\geq
    (-\delta(z))\frac{\eps}{2}\ddbar\abs{z}^2(\tau\wedge\bar\tau)\\
    +\frac{1}{1-\eta}(-\delta(z))\abs{(\eta\partial\varphi-2\alpha)(\tau)}^2
    -O\left((-\delta(z))^2\norm{\delta}_{C^3(U)}\norm{\delta}_{C^2(U)}\right).
  \end{multline}
  Let $\nu=4\sum_{j=1}^n\frac{\partial\delta}{\partial\bar z_j}\frac{\partial}{\partial z_j}$, so that $\partial\delta(\nu)=1$.  Then
  \begin{equation}
  \label{eq:Theta_mixed_estimate}
    \abs{\Theta(-i\tau\wedge\bar\nu)}\leq\abs{(2\alpha-\eta\partial\varphi)(\tau)}
    +O\left((-\delta(z))\left(\norm{\varphi}_{C^2(U)}+\norm{\varphi}_{C^1(U)}^2\right)\right).
  \end{equation}
  Finally,
  \begin{multline*}
    \Theta(-i\nu\wedge\bar\nu)\geq (1-\eta)(-\delta(z))^{-1}\\
    -O\left(\norm{\delta}_{C^2(U)}+\norm{\varphi}_{C^1(U)}+(-\delta(z))\left(\norm{\varphi}_{C^2(U)}+\norm{\varphi}_{C^1(U)}^2\right)\right),
  \end{multline*}
  so
  \begin{multline}
  \label{eq:Theta_normal_estimate}
    \Theta(-i\nu\wedge\bar\nu)-(-\delta(z))\frac{\eps}{2}\ddbar\abs{z}^2(\nu\wedge\bar\nu)\geq (1-\eta)(-\delta(z))^{-1}\\
    -O\left(\norm{\delta}_{C^2(U)}+\norm{\varphi}_{C^1(U)}+(-\delta(z))\left(\norm{\varphi}_{C^2(U)}+\norm{\varphi}_{C^1(U)}^2+\eps\right)\right).
  \end{multline}
  For $z\in\Omega$ and $|\delta(z)|$ sufficiently small, the right-hand sides of \eqref{eq:Theta_tangential_estimate} and \eqref{eq:Theta_normal_estimate} are strictly positive, so we may multiply these estimates and subtract the square of \eqref{eq:Theta_mixed_estimate} to obtain
  \begin{multline*}
    \left(\Theta(-i\tau\wedge\bar\tau)-(-\delta(z))\frac{\eps}{2}\ddbar\abs{z}^2(\tau\wedge\bar\tau)\right)\left(\Theta(-i\nu\wedge\bar\nu)
    -(-\delta(z))\frac{\eps}{2}\ddbar\abs{z}^2(\nu\wedge\bar\nu)\right)\\
    -\abs{\Theta(-i\tau\wedge\bar\nu)}^2\geq -O\left(-\delta(z)\right),
  \end{multline*}
  where the constant in the error term depends only on $\norm{\varphi}_{C^2(U)}$, $\norm{\delta}_{C^3(U)}$, and $\eta$.  We conclude that $\Theta\geq i\frac{\eps}{4}(-\delta(z))\ddbar\abs{z}^2$ for $z\in\Omega$ when $|\delta(z)|$ is sufficiently small.

\end{proof}

\begin{proof}[Proof of Theorem \ref{thm:Main} (1)]
  Let $\tilde U$ and $\varphi$ be given by Lemma \ref{lem:DF_index}.  Let
  \[
    A=-\sup_{\partial\tilde U\cap\Omega}e^{-\varphi}\delta>0
  \]
  and choose $B>0$ sufficiently small so that $B|z|^2-A<0$ on $\partial\Omega$.  Then we may define
  \[
    \rho(z)=\begin{cases}\max\set{e^{-\varphi(z)}\delta(z),B|z|^2-A}&z\in\tilde{U}\\
    B|z|^2-A&z\in\Omega\backslash\tilde{U}
    \end{cases}
  \]
  to obtain a Lipschitz defining function on $\tilde{U}\cup\Omega$ such that $-(-\rho)^\eta$ is strictly plurisubharmonic on $\Omega$.  A simple modification of Lemma \ref{lem:patching} can be used to obtain a smooth defining function.
\end{proof}

\section{Stein Neighborhood Bases}
\label{sec:Stein}

The proof of Theorem \ref{thm:Main} (2) is nearly identical to Theorem \ref{thm:Main} (1), so we merely outline the argument and highlight the differences.
\begin{lem}
\label{lem:Stein_weight_near_K}
  Let $\Omega\subset\mathbb{C}^n$ be a smooth, bounded, pseudoconvex domain admitting a family of good vector fields.  If $K\subset\partial\Omega$ is the set of infinite type points, suppose that every $p\in K$ belongs to an admissible leaf of $K$.  If $0<\eta<1$ and $B>2(n-1)\log C$ where $C$ is the constant given in Lemma \ref{lem:h_V_construction}, then there exists a neighborhood $U_K$ of $K$ and a smooth, real-valued function $\varphi_K$ on $U_K$ such that $\abs{\varphi_K}\leq B$ on $U_K$ and
  \begin{equation}
  \label{eq:Stein_weight_near_K}
    i\ddbar\varphi_K-2\beta-i\frac{1}{1-\eta}(\partial\varphi_K+2\pi_{1,0}\alpha)\wedge(\dbar\varphi_K+2\pi_{0,1}\alpha)>0.
  \end{equation}
\end{lem}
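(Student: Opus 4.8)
The plan is to repeat the constructions of \S\ref{sec:weight} and the proof of Lemma~\ref{lem:weight_near_K} essentially verbatim, replacing the estimate \eqref{eq:weight_near_K} by \eqref{eq:Stein_weight_near_K} at every occurrence and tracking the resulting sign changes. The de~Rham cohomology input, Lemma~\ref{lem:h_V_construction}, is intrinsic to the forms $\alpha$ and $\beta$ and is used without modification; in particular, on each admissible leaf $V$ of dimension $m\geq 1$ we still obtain a smooth $h_{V,\eps}$ with $dh_{V,\eps}|_{T_p(\tilde V)}=\alpha$, $i\ddbar h_{V,\eps}|_{T^{1,0}_p(\tilde V)\wedge T^{0,1}_p(\tilde V)}=-\beta$ on $V_\eps$, and $\abs{h_{V,\eps}}\leq\frac{1}{2}\log C$. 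What must be redone is the chain Lemma~\ref{lem:weight_near_point} $\to$ Lemma~\ref{lem:patching} $\to$ Lemma~\ref{lem:weight_near_leaf}, now producing functions satisfying \eqref{eq:Stein_weight_near_K} in place of \eqref{eq:weight_near_K}, after which a finite cover of the compact set $K$ (compactness by D'Angelo~\cite{DAn82}) is patched together exactly as in Lemma~\ref{lem:weight_near_K}.

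First I would note that the patching argument of Lemma~\ref{lem:patching} goes through unchanged for \eqref{eq:Stein_weight_near_K}: taking $\theta_j=\partial\varphi_j+2\pi_{1,0}\alpha$ we have $\partial\varphi_0+2\pi_{1,0}\alpha=\sum_{j\in J}a_j\theta_j$, so \eqref{eq:convexity_estimate} gives $-i\frac{1}{1-\eta}\left(\sum_{j\in J}a_j\theta_j\right)\wedge\left(\sum_{k\in J}a_k\bar\theta_k\right)\geq-i\frac{1}{1-\eta}\sum_{j\in J}a_j\theta_j\wedge\bar\theta_j$, while $i\ddbar\varphi_0\geq\sum_{j\in J}a_j\,i\ddbar\varphi_j$ (the extra term involving $b_{jk}$ being positive semidefinite) and $-2\beta=\sum_{j\in J}a_j(-2\beta)$; adding these shows that $\varphi_0$ satisfies \eqref{eq:Stein_weight_near_K} whenever every $\varphi_j$ does. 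This works for the same structural reason as in Lemma~\ref{lem:patching}: in \eqref{eq:Stein_weight_near_K} the subtracted term is a (signed) square, with a coefficient of the correct sign for \eqref{eq:convexity_estimate}, of a $(1,0)$-form affine in $\varphi$ with constant $\pi_{1,0}\alpha$-part, while $i\ddbar\varphi-2\beta$ is the part that behaves well under convex combination.

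Next come the base case and the inductive step. In the base case one again sets $\varphi_p=-\zeta^{-1}\log(\cdots)$ as in Lemma~\ref{lem:weight_near_point}, but chooses $\zeta>\frac{1}{1-\eta}$ and rearranges terms using the identity $\left(\frac{1}{1-\eta}\right)^2=\left(\frac{\zeta}{(1-\eta)\zeta-1}-\frac{1}{1-\eta}\right)\left(\zeta-\frac{1}{1-\eta}\right)$; the bounded forms $-2\beta$ and $i\frac{4\zeta}{(1-\eta)\zeta-1}\pi_{1,0}\alpha\wedge\pi_{0,1}\alpha$ are then dominated by the term $i\frac{M}{R_{M,r}^2}\ddbar\abs{z}^2$ supplied by $\lambda_{M,r}$, which can be made as large as we wish by shrinking $r$; replacing $+2\beta$ by $-2\beta$ only alters a bounded quantity, so nothing is lost. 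In the inductive step one sets $\varphi_\eps=-2h_{\eps}-\zeta^{-1}\log(-\phi)$, with $\phi$ built from $\abs{f}^2$ and $\lambda_{M,r}\circ f$ as in Lemma~\ref{lem:weight_near_leaf}; the coefficient $-2$ on $h_\eps$ is the one that makes, on $T^{1,0}_p(\tilde V)$, both $i\ddbar(-2h_{V,\eps})-2\beta=2\beta-2\beta=0$ and $\partial(-2h_{V,\eps})+2\pi_{1,0}\alpha=-2\pi_{1,0}\alpha+2\pi_{1,0}\alpha=0$, so leaf directions are controlled by an auxiliary form $\Theta(r)$ whose $h$-part vanishes on $T(\tilde V)$, exactly as before, while transverse directions are controlled by $i\ddbar\abs{f}^2$. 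All remaining details — the choices of $E$, $M$, $r$, the sets $\mathcal{O}_j$ and $\mathcal{O}_\eps$, the order relations on overlaps, the final patch via the Stein form of Lemma~\ref{lem:patching}, and the bookkeeping giving $\abs{\varphi_V}\leq B$ for $B>2(n-1)\log C$ — translate line by line.

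Finally, cover $K$ by finitely many of the neighborhoods produced by the Stein form of Lemma~\ref{lem:weight_near_leaf} and patch them to obtain $\varphi_K$ on a neighborhood $U_K$ of $K$ with $\abs{\varphi_K}\leq B$ and \eqref{eq:Stein_weight_near_K}. I expect no substantive new obstacle; the only step demanding care is keeping the signs straight in the exponential substitution and in the completion of the square in the inductive step, which is precisely why the argument was described as nearly identical to the one already carried out.
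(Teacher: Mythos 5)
Your proposal is correct and matches the paper's own argument, which is stated in one line: repeat the proof of Lemma~\ref{lem:weight_near_K} after replacing $h_\eps$ by $-h_\eps$ in the proof of Lemma~\ref{lem:weight_near_leaf}. You have simply spelled out the sign-tracking (the choice $\zeta>\frac{1}{1-\eta}$, the modified algebraic identity, the cancellations on $T^{1,0}(\tilde V)$, and the patching with $\theta_j=\partial\varphi_j+2\pi_{1,0}\alpha$), all of which check out.
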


\begin{proof}
  The proof is nearly identical to the proof of Lemma \ref{lem:weight_near_K}, except we replace $h_{\eps}$ with $-h_{\eps}$ in the proof of Lemma \ref{lem:weight_near_leaf}.
\end{proof}

\begin{lem}
\label{lem:Stein_weight_near_boundary}
  Let $\Omega\subset\mathbb{C}^n$ be a smooth, bounded, pseudoconvex domain and suppose that there exists a neighborhood $U_K$ of the set of infinite type points $K\subset\partial\Omega$ and a smooth, bounded, real-valued function $\varphi_K$ on $U_K$ satisfying \eqref{eq:weight_near_K} on $U_K$ for some $0<\eta<1$.  Then there exists a neighborhood $U$ of $\partial\Omega$ and a smooth, bounded, real-valued function $\varphi$ on $U$ such that
  \begin{equation}
  \label{eq:Stein_weight_near_boundary}
    i\ddbar\varphi-2\beta-i\frac{1}{1-\eta}(\partial\varphi+2\pi_{1,0}\alpha)\wedge(\dbar\varphi+2\pi_{0,1}\alpha)>0.
  \end{equation}
\end{lem}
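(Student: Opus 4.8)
The plan is to follow the proof of Lemma \ref{lem:weight_near_boundary} essentially verbatim, replacing the Diederich--Fornaess inequality \eqref{eq:weight_near_point} by its Stein counterpart \eqref{eq:Stein_weight_near_boundary} throughout. Two preliminary facts must be re-established with the new signs: a version of Lemma \ref{lem:weight_near_point} in which \eqref{eq:weight_near_point} is replaced by \eqref{eq:Stein_weight_near_boundary}, and a version of the patching Lemma \ref{lem:patching} in which the weight estimate \eqref{eq:weight_estimate} is replaced by \eqref{eq:Stein_weight_near_boundary}. Once these are available, the argument is identical to that of Lemma \ref{lem:weight_near_boundary}.

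For the local construction near a point $p\in\partial\Omega\backslash K$, recall from Remark \ref{rem:finite_type} that, with $K=\partial\Omega\cap\overline{B(p,R)}$ for $R$ small, Catlin's Property $(P)$ supplies functions $\lambda_{M,r}$ with $i\ddbar\lambda_{M,r}\geq i\frac{M}{R_{M,r}^2}\ddbar\abs{z}^2$ for arbitrarily large $M$. Mimicking the proof of Lemma \ref{lem:weight_near_point}, now with $\zeta>\frac{1}{1-\eta}$ and $M$ suitably large, one builds $\varphi_p=-\zeta^{-1}\log(\cdots)$ with $\varphi_p(p)\geq A$, $\varphi_p\leq -A$ on $\partial U_p\cap K$, and $\abs{\varphi_p}\leq B$; then $i\ddbar(-e^{-\zeta\varphi_p})$ is as positive as desired, and the quadratic cross-term is absorbed exactly as before, using $i\left(\left(\zeta-\frac{1}{1-\eta}\right)\partial\varphi_p+\frac{2}{1-\eta}\pi_{1,0}\alpha\right)\wedge\overline{\left(\left(\zeta-\frac{1}{1-\eta}\right)\partial\varphi_p+\frac{2}{1-\eta}\pi_{1,0}\alpha\right)}\geq 0$ together with the identity $\left(\frac{1}{1-\eta}\right)^2=\left(\frac{\zeta}{(1-\eta)\zeta-1}-\frac{1}{1-\eta}\right)\left(\zeta-\frac{1}{1-\eta}\right)$. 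The only genuinely new feature is that $-2\beta$ now enters with the unfavorable sign; but $\beta$ is a fixed smooth form on the compact set $\partial\Omega$, hence bounded, so it is dominated by the large positive Hessian once $M$ and then $r$ are fixed in that order, yielding \eqref{eq:Stein_weight_near_boundary} on a sufficiently small $U_p$.

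For the patching, the proof of Lemma \ref{lem:patching} never uses the particular signs appearing in \eqref{eq:weight_estimate}: with $\varphi_0=\xi\cdot\chi\big((\xi^{-1}\varphi_j)_{j\in J}\big)$ one still has $\partial\varphi_0=\sum_{j\in J}a_j\partial\varphi_j$ and $i\ddbar\varphi_0\geq\sum_{j\in J}a_j\,i\ddbar\varphi_j$, and \eqref{eq:convexity_estimate} applied with $\theta_j=\partial\varphi_j+2\pi_{1,0}\alpha$ gives $\sum_{j\in J}ia_j\theta_j\wedge\bar\theta_j\geq i\big(\sum_{j\in J}a_j\theta_j\big)\wedge\overline{\big(\sum_{j\in J}a_j\theta_j\big)}$; since $\sum_{j\in J}a_j=1$, this is precisely what propagates \eqref{eq:Stein_weight_near_boundary} through the regularized maximum, because the quadratic term $-\frac{1}{1-\eta}i(\cdots)\wedge(\cdots)$ carries a negative coefficient, so moving it to the other side and applying the convexity estimate preserves the inequality. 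Thus the Stein patching lemma holds word for word.

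Granting these two facts, fix $B>A>\sup_{U_K}\abs{\varphi_K}$. The local construction gives, for each $p\in\partial\Omega\backslash K$, a neighborhood $U_p$ and a weight $\varphi_p$ with $\varphi_p(p)\geq A$, $\varphi_p\leq -A$ on $\partial U_p\cap\partial\Omega$, $\abs{\varphi_p}<B$, and \eqref{eq:Stein_weight_near_boundary} on $U_p$. The open sets $\set{z:\varphi_p(z)>\sup_{U_K}\abs{\varphi_K}}$ cover the compact set $\partial\Omega\backslash U_K$, so we pass to a finite subcover and apply the Stein patching lemma to $\varphi_K$ (which satisfies \eqref{eq:Stein_weight_near_boundary} by hypothesis) together with the selected $\varphi_p$, producing the desired $\varphi$ on a neighborhood $U$ of $\partial\Omega$. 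There is no real obstacle here beyond bookkeeping: the one point requiring care is that in the finite-type construction the constants $\zeta$, $M$, $r$ must be chosen in that order, each only after the previous one is fixed, and that the sign-flipped rearrangement of the quadratic terms in Lemmas \ref{lem:weight_near_point} and \ref{lem:patching} still closes --- routine, if slightly tedious, computations.
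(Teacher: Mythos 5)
Your proposal is correct and follows the paper's intent exactly: the paper's proof of this lemma is the single sentence that it is nearly identical to the proof of Lemma \ref{lem:weight_near_boundary}, namely one constructs local weights at finite-type points via the Stein analogue of Lemma \ref{lem:weight_near_point}, covers $\partial\Omega\setminus U_K$ by finitely many such neighborhoods, and patches with $\varphi_K$ using the Stein analogue of Lemma \ref{lem:patching}; you spell out correctly why both analogues hold (in particular your algebraic identity $\left(\tfrac{1}{1-\eta}\right)^2=\left(\tfrac{\zeta}{(1-\eta)\zeta-1}-\tfrac{1}{1-\eta}\right)\left(\zeta-\tfrac{1}{1-\eta}\right)$ is the right one, and your observation that the patching argument is sign-agnostic is exactly right). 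One small slip: in the square used for the quadratic rearrangement you should take $\left(\zeta-\tfrac{1}{1-\eta}\right)\partial\varphi_p-\tfrac{2}{1-\eta}\pi_{1,0}\alpha$ (minus, not plus), so that after expanding and dividing by $\zeta-\tfrac{1}{1-\eta}$ the cross terms come out with the sign $-\tfrac{2}{1-\eta}$ needed to match the $+2\pi_{1,0}\alpha$ appearing in \eqref{eq:Stein_weight_near_boundary}.
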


\begin{proof}
  Once again, the proof is nearly identical to the proof of Lemma \ref{lem:weight_near_boundary}.
\end{proof}

\begin{lem}
\label{lem:Stein_index}
  Let $\Omega\subset\mathbb{C}^n$ be a smooth, bounded, pseudoconvex domain and suppose that there exists a neighborhood $U$ of $\partial\Omega$ and a smooth, bounded, real-valued function $\varphi$ on $U$ satisfying \eqref{eq:weight_near_boundary} on $U$ for some $0<\eta<1$.  Then there exists a neighborhood $\tilde U$ of $\partial\Omega$ such that $e^{\varphi/\eta}\delta^{1/\eta}$ is strictly plurisubharmonic on $\Omega^c\cap\tilde{U}$.
\end{lem}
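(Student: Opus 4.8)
The plan is to mirror the proof of Lemma~\ref{lem:DF_index}, transplanted to the exterior side, where $\delta>0$. First I would set $\rho=e^{\varphi}\delta$, so that $e^{\varphi/\eta}\delta^{1/\eta}=\rho^{1/\eta}$ and $\rho>0$ on $(\Omega^c\setminus\partial\Omega)\cap U$. A direct computation gives
\[
  i\ddbar\!\left(\rho^{1/\eta}\right)=\frac{1}{\eta}\,\rho^{1/\eta-1}e^{\varphi}\,\Theta,\qquad
  \Theta:=ie^{-\varphi}\!\left(\ddbar\rho+\frac{1-\eta}{\eta}\,\rho^{-1}\partial\rho\wedge\dbar\rho\right),
\]
so, since $\rho^{1/\eta-1}>0$ there, it suffices to show that $\Theta$ is positive definite on $\Omega^c\cap\tilde U$ for a sufficiently thin neighborhood $\tilde U$ of $\partial\Omega$. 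Expanding $\rho=e^{\varphi}\delta$ as in Lemma~\ref{lem:DF_index} and using $1+\frac{1-\eta}{\eta}=\frac1\eta$, this $\Theta$ works out to
\[
  \Theta=i\!\left(\ddbar\delta+\delta\ddbar\varphi+\frac1\eta\partial\varphi\wedge\dbar\delta+\frac1\eta\partial\delta\wedge\dbar\varphi+\frac{1-\eta}{\eta}\,\delta^{-1}\partial\delta\wedge\dbar\delta+\frac1\eta\,\delta\,\partial\varphi\wedge\dbar\varphi\right),
\]
the analogue of the form $\Theta$ in Lemma~\ref{lem:DF_index}, with $-\delta$ replaced by $\delta$ and the exponents adjusted to $\rho^{1/\eta}$ rather than $-(-\rho)^{\eta}$.

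Second, I would check that the Weinstock-type identity behind \eqref{eq:Weinstock_estimate} and its consequence \eqref{eq:levi_form_eigenvalue} hold for $z\in U$ on \emph{either} side of $\partial\Omega$: the derivation integrates $\frac{d}{dt}\frac{\partial^2\delta}{\partial z_j\partial\bar z_k}$ along the segment from $z$ to $\xi(z)=z-2\delta(z)\frac{\partial\delta}{\partial\bar z}(z)$, i.e.\ from parameter $0$ to $-\delta(z)$, which is legitimate whatever the sign of $-\delta(z)$, and the Levi-eigenvalue contribution $\mu\abs{\tau}^2+2(-\delta(z))\mu^2\abs{\tau}^2=\mu\abs{\tau}^2(1-2\delta(z)\mu)$ is still nonnegative once $\abs{\delta(z)}$ is small, since $\mu\geq 0$ is bounded on $\partial\Omega$. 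Then, exactly as in Lemma~\ref{lem:DF_index}, I would test $\Theta$ against a unit Levi eigenvector $\tau$ at $\xi(z)$, extended as a constant $(1,0)$ field (so $\partial\delta(\tau)=0$), and against $\nu=4\sum_j\frac{\partial\delta}{\partial\bar z_j}\frac{\partial}{\partial z_j}$ (so $\partial\delta(\nu)=1$). On $\tau$ the three terms of $\Theta$ carrying a bare $\partial\delta$ or $\dbar\delta$ drop out, so combining \eqref{eq:levi_form_eigenvalue} with the weight estimate \eqref{eq:Stein_weight_near_boundary} (whose sign pattern $-2\beta$, coefficient $\frac{1}{1-\eta}$, $+2\pi_{1,0}\alpha$ is precisely what is needed) and the identity
\[
  \frac{1}{1-\eta}\abs{a+b}^2+\frac1\eta\abs{a}^2-\abs{b}^2=\frac{1}{\eta(1-\eta)}\abs{a+\eta b}^2
\]
with $a=\partial\varphi(\tau)$, $b=2\alpha(\tau)$, I expect to get
\[
  \Theta(-i\tau\wedge\bar\tau)\geq\eps\,\delta(z)\abs{\tau}^2+\frac{\delta(z)}{\eta(1-\eta)}\abs{(\partial\varphi+2\eta\alpha)(\tau)}^2-O\!\left(\delta(z)^2\right),
\]
which is positive because $\delta(z)>0$ on $\Omega^c$. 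The $\delta^{-1}\partial\delta\wedge\dbar\delta$ term now blows up with the favourable sign, so $\Theta(-i\nu\wedge\bar\nu)\geq\frac{1-\eta}{\eta}\delta(z)^{-1}-O(1)$, while the mixed term is $\Theta(-i\tau\wedge\bar\nu)=\frac1\eta(\partial\varphi+2\eta\alpha)(\tau)+O(\delta(z))$.

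Finally I would subtract $\frac{\eps}{2}\delta(z)\,i\ddbar\abs{z}^2$ from $\Theta$ and run the $2\times 2$ determinant argument of Lemma~\ref{lem:DF_index}: the product of the tangential and normal lower bounds contains the cross term $\frac{\delta(z)}{\eta(1-\eta)}\abs{(\partial\varphi+2\eta\alpha)(\tau)}^2\cdot\frac{1-\eta}{\eta}\delta(z)^{-1}=\frac{1}{\eta^2}\abs{(\partial\varphi+2\eta\alpha)(\tau)}^2$, which exactly cancels $\abs{\Theta(-i\tau\wedge\bar\nu)}^2$, leaving a determinant $\geq\frac{\eps(1-\eta)}{2\eta}\abs{\tau}^2-O(\delta(z))$; this is positive for $\abs{\delta(z)}$ small, so $\Theta\geq\frac{\eps}{4}\delta(z)\,i\ddbar\abs{z}^2$ on $\Omega^c\cap\tilde U$ for a suitably thin $\tilde U$, giving the strict plurisubharmonicity of $e^{\varphi/\eta}\delta^{1/\eta}$ there. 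The one genuinely new ingredient beyond Lemma~\ref{lem:DF_index}, and the step I would be most careful with, is the sign bookkeeping on the exterior side: confirming that \eqref{eq:Weinstock_estimate}--\eqref{eq:levi_form_eigenvalue} and the discarding of the Levi-eigenvalue terms survive when $-\delta(z)<0$, and that $\delta^{-1}$ contributes positively in the normal direction. Everything else is the computation of Lemma~\ref{lem:DF_index} run with the exponents dictated by $\rho^{1/\eta}$ in place of $-(-\rho)^{\eta}$.
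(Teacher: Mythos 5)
Your proposal is correct and follows essentially the same route as the paper: the paper sets $\rho=e^{\varphi}\delta$, exhibits the same form $\Theta$ (yours differs only by the harmless positive factor $e^{2\varphi}$), and then simply asserts that positivity of $\Theta$ ``will follow from \eqref{eq:Stein_weight_near_boundary} in exactly the same manner as in the proof of Lemma~\ref{lem:DF_index}.'' You in fact spell out the sign bookkeeping that the paper leaves implicit — the Weinstock integral being valid on both sides of $\partial\Omega$, the nonnegativity of $\mu\abs{\tau}^2(1-2\delta\mu)$ for small $\abs{\delta}$, the favourable sign of $\delta^{-1}\partial\delta\wedge\dbar\delta$ exterior to $\Omega$, and the determinant cancellation via the identity $\frac{1}{1-\eta}\abs{a+b}^2+\frac{1}{\eta}\abs{a}^2-\abs{b}^2=\frac{1}{\eta(1-\eta)}\abs{a+\eta b}^2$ — and you correctly use the Stein estimate \eqref{eq:Stein_weight_near_boundary} rather than \eqref{eq:weight_near_boundary} as written in the lemma statement, which appears to be a typo in the paper.
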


\begin{proof}
  If we set $\rho(z)=e^{\varphi(z)}\delta(z)$, then we have
  \[
    i\ddbar\rho^{1/\eta}=\eta^{-1}\rho^{1/\eta-1}(i\ddbar\rho+i(\eta^{-1}-1)\rho^{-1}\partial\rho\wedge\dbar\rho),
  \]
  so since
  \begin{multline*}
    i\ddbar\rho+i(\eta^{-1}-1)\rho^{-1}\partial\rho\wedge\dbar\rho=\\
    ie^{\varphi}(\ddbar\delta+\delta\ddbar\varphi+(\eta^{-1}-1)\delta^{-1}\partial\delta\wedge\dbar\delta+\eta^{-1}\partial\delta\wedge\dbar\varphi+\eta^{-1}\partial\varphi\wedge\dbar\delta+\eta^{-1}\delta\partial\varphi\wedge\dbar\varphi),
  \end{multline*}
  it suffices to show
  \[
    \Theta:=ie^{\varphi}(\ddbar\delta+\delta\ddbar\varphi+(\eta^{-1}-1)\delta^{-1}\partial\delta\wedge\dbar\delta+\eta^{-1}\partial\delta\wedge\dbar\varphi+\eta^{-1}\partial\varphi\wedge\dbar\delta+\eta^{-1}\delta\partial\varphi\wedge\dbar\varphi)
  \]
  is positive definite.  This will follow from \eqref{eq:Stein_weight_near_boundary} in exactly the same manner as in the proof of Lemma \ref{lem:DF_index}.
\end{proof}

\section{Examples}
\label{sec:examples}

Our first family of examples will consist of the model domains given in Corollary \ref{cor:cross_product}.
\begin{proof}[Proof of Corollary \ref{cor:cross_product}]
  Suppose $K$ satisfies the hypotheses of Corollary \ref{cor:cross_product}.  For every $p\in K$, we set $f(z)=g(z)-g(p)$, and $V=f^{-1}[\{0\}]\cap K$ will be an admissible leaf of dimension $m$.  The only property that requires some justification is Definition \ref{defn:leaf_m} \eqref{item:leaf_m_property_p}.  As shown by Boas in \cite{Boa88}, for every $1\leq j\leq n-m$ and $\tilde{M}>0$, there exists a smooth function $\lambda_j$ in a neighborhood of $g_j[K]$ such that $0\leq\lambda_j\leq 1$ and $i\ddbar\lambda_j\geq iM\ddbar\abs{z}^2$.  Given any $M>0$ and $r>0$, we choose any $0<R_{M,r}<r$, and set $\tilde{M}=\frac{(n-m)M}{R_{M,r}^2}$.  Then we may set $\lambda_{M,r}(z)=\frac{1}{n-m}\sum_{j=1}^{n-m}\lambda_j(z_j)$.  Having shown that $V$ is an admissible leaf, Corollary \ref{cor:cross_product} will follow from our main theorem.
\end{proof}

In contrast to Corollary \ref{cor:cross_product}, suppose $C\subset\mathbb{R}$ is a Smith-Volterra-Cantor set (i.e., topologically equivalent to the Cantor set, but with positive measure). Suppose that the set of points of infinite type in the boundary of some bounded, smooth, pseudoconvex domain is biholomorphic to $C\times C\subset\mathbb{C}$.  Then the methods of the present paper will fail, because the set contains no analytic discs but is too large to admit a good family of weight functions.  It would be of great interest to know whether such a domain could provide a counterexample to our main result, or if it could be approached with a more refined technique.  See Example 1 in \cite{Boa88} for a related example that could possibly be modified to provide a concrete construction of a smooth, bounded, pseudoconvex domain for which the methods of the present paper fail.

We also obtain a large family of examples from Catlin's Property $(P)$:
\begin{proof}[Proof of Corollary \ref{cor:Property_P}]
  Recall that a domain satisfies Property $(P)$ according to Catlin \cite{Cat84} if for every $M>0$ there exists a smooth, plurisubharmonic function $\lambda$ on $\overline\Omega$ such that $0\leq\lambda\leq 1$ and $i\ddbar\lambda\geq iM\ddbar\abs{z}^2$ on $\partial\Omega$.  In this case, every point of infinite type is an admissible leaf of dimension $0$, so the Corollary follows from our main theorem.
\end{proof}

For the remainder of this section, we will focus on examples with sets of infinite type points like those pictured in Figure \ref{fig:examples}, Example 1.  We will show that such domains admit a family of good vector fields, and hence the Diederich-Fornaess Index is equal to $1$.  Finally, we will construct a family of concrete examples adapted from the worm domain, in which case these results are nontrivial.

\begin{proof}[Proof of Proposition \ref{prop:annulus}]
  Set $p=(1,0,\ldots,0)$, so that after our biholomorphic change of coordinates $K\backslash\{p\}$ can be identified with $S\backslash\{1\}$.  We write $z_1=x+iy$.  Choose $r>0$ sufficiently small so that $z_1\in S\cap\overline{B(1,r)}$ only if $\abs{y}\geq\abs{x-1}^\gamma$.  We assume, without loss of generality, that $\gamma>\frac{1}{3}$, so that $2\gamma/(1-\gamma)>1$.

  Define $\chi\in C^\infty(\mathbb{R})$ to be a nondecreasing function satisfying $\chi(t)=0$ when $t\leq 0$ and $\chi(t)=1$ when $t\geq 1$.  For $\zeta>0$, we define
  \[
    \psi_{\zeta,m}(z_1)=\sum_{j=0}^m (x-1)^j\frac{(-i)^j}{j!}\frac{d^j}{dy^j}\chi(2-\zeta^{-2}y^2)
  \]
  on the set
  \[
    \mathcal{O}_{\zeta}=\set{z_1\in B(1,r):|y|<2\zeta,|x-1|<2(2\zeta)^{1/\gamma}}.
  \]
  When $\zeta$ is small enough so that $\overline{\mathcal{O}_\zeta}\subset B(1,r)$, we have $z_1\in\partial\mathcal{O}_\zeta\cap S$ only if $|y|=2\zeta$ and $|x-1|\leq(2\zeta)^{1/\gamma}$.  Note that we may choose $\chi$ to be real analytic on the interval $(0,1)$, in which case $\psi_{\zeta,m}$ will converge to a holomorphic extension of $\chi(2+\zeta^{-2}z_1^2)|_{z_1=iy}$, but the domain of convergence will be too small for our purposes when $z_1$ is close to $i\zeta$ or $i\sqrt{2}\zeta$.  Hence, we instead compute
  \[
    \frac{\partial}{\partial\bar z_1}\psi_{\zeta,m}(z_1)=-\frac{1}{2}(x-1)^m\frac{(-i)^{m+1}}{m!}\frac{d^{m+1}}{dy^{m+1}}\chi(2-\zeta^{-2}y^2).
  \]
  Note that derivatives of $\chi$ are only non-vanishing when $\zeta<\abs{y}<\sqrt{2}\zeta$.  Hence, we may inductively check that $\abs{\frac{d^j}{dy^j}\chi(2-\zeta^{-2}y^2)}\leq O(\zeta^{-j})$ when $j\geq 1$.  Throughout this paragraph, our error terms will be computed as $\zeta\rightarrow 0$.  On $\mathcal{O}_\zeta$, we have
  \[
    \abs{\frac{\partial}{\partial\bar z_1}\psi_{\zeta,m}(z_1)}\leq O\left(\zeta^{m/\gamma-m-1}\right)
  \]
  and
  \[
    \abs{\psi_{\zeta,m}(z_1)-\chi(2-\zeta^{-2}y^2)}\leq O(\zeta^{1/\gamma-1})
  \]
  Given $\eps>0$, we set $\zeta=\eps^{2\gamma/(1-\gamma)}$ and choose $m\geq \frac{1}{1-\gamma}$.  For such values, we will write $\psi_\eps=\psi_{\zeta,m}$ and $\mathcal{O}_\eps=\mathcal{O}_\zeta$.  We now have a function that vanishes on $\partial\mathcal{O}_\eps\cap S$ when $\eps$ is sufficiently small and satisfies $\abs{\frac{\partial}{\partial\bar z_1}\psi_{\eps}(z_1)}\leq O(\eps^2)$ and $\abs{\psi_{\eps}(z_1)-\chi(2-\eps^{-4\gamma/(1-\gamma)}y^2)}\leq O(\eps^2)$ on $\mathcal{O}_\eps$.

  By adapting the argument of \cite{BoSt93}, there exists an open set $\tilde U_\eps$ such that $K\backslash\{p\}\subset\tilde U_\eps$, a $(1,0)$ vector field $\tilde X_\eps$ with smooth coefficients on $\tilde U_\eps$, and a constant $C>0$ such that $2C^{-1}<\abs{\tilde X_\eps\delta}<\frac{1}{2}C$, $\abs{\arg\tilde X_\eps\delta}<\frac{1}{2}\eps$, and $\abs{\partial\delta([\tilde X_\eps,\partial/\partial\bar z_j])}<\frac{1}{2}\eps$ for all $1\leq j\leq n$, all on $\tilde U_\eps$.  As in Section \ref{sec:de_rham_cohomology}, the construction of $\tilde{X}_\eps$ relies on integrating the closed one-form $\alpha$ on $S$, so the simple-connectedness of $S\backslash\{1\}$ is sufficient for our purposes.  Assume that $\eps>0$ is sufficiently small so that $\overline{\mathcal{O}_\eps}\subset B(1,r)\cap B(1,3\eps^{2\gamma/(1-\gamma)})$ and $\delta$ is smooth on $\overline{B(p,3\eps^{2\gamma/(1-\gamma)})}$, and define
  \begin{multline*}
    U_\eps=\set{z\in\tilde U_\eps:|z_1-1|>r\text{ or }|\im z_1|>\sqrt{2}\eps^{2\gamma/(1-\gamma)}\text{ or }z_1\in\mathcal{O}_\eps}\\
    \cup\set{z\in B(p,3\eps^{2\gamma/(1-\gamma)}):|\im z_1|<\eps^{2\gamma/(1-\gamma)}\text{ and }z_1\in\mathcal{O}_\eps}.
  \end{multline*}
  On $U_\eps$, we define $X_\eps=\tilde X_\eps$ when $z\in U_\eps\cap\tilde U_\eps$ but $z_1\notin\mathcal{O}_\eps$, we define
  \[
    X_\eps=(1-\psi_{\eps}(z_1))\tilde X_\eps+4\psi_{\eps}(z_1)\sum_{j=1}^n\frac{\partial\delta}{\partial\bar z_j}(p)\frac{\partial}{\partial z_j}
  \]
  when $z\in U_\eps\cap\tilde U_\eps$ and $z_1\in\mathcal{O}_\eps$, and we define $X_\eps=4\sum_{j=1}^n\frac{\partial\delta}{\partial\bar z_j}(p)\frac{\partial}{\partial z_j}$ when $z\in U_\eps\backslash\tilde U_\eps$ and $z_1\in\mathcal{O}_\eps$.  For $z\in U_\eps\cap\tilde U_\eps$ satisfying $z_1\in\mathcal{O}_\eps$, we have
  \[
    X_\eps\delta(z)=(1-\psi_{\eps}(z_1))\tilde X_\eps\delta(z)+4\psi_{\eps}(z_1)\sum_{j=1}^n\frac{\partial\delta}{\partial\bar z_j}(p)\frac{\partial\delta}{\partial z_j}(z)
  \]
  so
  \begin{multline*}
    X_\eps\delta(z)=(1-\chi(2-\eps^{-4\gamma/(1-\gamma)}y^2))\tilde X_\eps\delta(z)+\chi(2-\eps^{-4\gamma/(1-\gamma)}y^2)\\
    +O\left(\eps^2+\norm{\delta}_{C^2(U_\eps)}\eps^{2\gamma/(1-\gamma)}\right).
  \end{multline*}
  When $\eps>0$ is sufficiently small, we may guarantee that on $U_\eps$ we have $C^{-1}<\abs{X_\eps\delta}<C$ and $\abs{\arg X_\eps\delta}<\eps$.  Furthermore,
  \[
    \partial\delta([X_\eps,\partial/\partial\bar z_1])=(1-\psi_{\eps}(z_1))\partial\delta([\tilde X_\eps,\partial/\partial\bar z_1])+\frac{\partial\psi_{\eps}}{\partial\bar z_1}(\tilde X_\eps\delta-1),
  \]
  so
  \[
    \abs{\partial\delta([X_\eps,\partial/\partial\bar z_1])}\leq\frac{1}{2}\eps+O(\eps^2).
  \]
  Once again, when $\eps>0$ is sufficiently small we have $\abs{\partial\delta([X_\eps,\partial/\partial\bar z_1])}<\eps$.  The proof when either $z_1\notin\mathcal{O}_\eps$ or $z\notin\tilde U_\eps$ is similar.
\end{proof}

With Proposition \ref{prop:annulus}, Corollary \ref{cor:annulus} will now follow from the main theorem.
\begin{proof}[Proof of Corollary \ref{cor:annulus}]
  As before, we let $p=(1,0,\ldots,0)$.  Given $M>0$ and $r>0$, choose
  \[
    0<R_{M,r}<\min\set{r,(2M)^{-\frac{\gamma}{2(1-\gamma)}}}
  \]
  sufficiently small so that $\abs{\re z_1-1}\leq\abs{\im z_1}^{1/\gamma}$ when $z\in K\cap\overline{B(p,R_{M,r})}$.  Set
  \[
    U_{M,r}=\set{z\in B(p,r):\sqrt{2(\re z_1-1)^2+\sum_{j=2}^n|z_j|^2}<\frac{R_{M,r}}{\sqrt{M}}}
  \]
  and
  \[
    \lambda_{M,r}(z)=\frac{2M}{R_{M,r}^2}(\re z_1-1)^2+\frac{M}{R_{M,r}^2}\sum_{j=2}^n|z_j|^2.
  \]
  It is easy to check that $0\leq\lambda_{M,r}\leq 1$ and $i\ddbar\lambda_{M,r}\geq i\frac{M}{R_{M,r}^2}\ddbar\abs{z}^2$ on $U_{M,r}$.  When $z\in K\cap\overline{B(p,R_{M,r})}$, we have $z_j=0$ for $2\leq j\leq n$ and
  \[
    \abs{\re z_1-1}\leq \abs{\im z_1}^{1/\gamma}\leq R_{M,r}^{1/\gamma}<\frac{R_{M,r}}{\sqrt{2M}}.
  \]
  Hence, $K\cap\overline{B(p,R_{M,r})}\subset U_{M,r}$, so we have shown that $p$ is an admissible leaf of dimension $0$ and $K\backslash\set{p}$ is an admissible leaf of dimension $1$.  The main theorem will imply that the Diederich-Fornaess Index of $\Omega$ is equal to $1$.
\end{proof}

We now construct an explicit example of a domain requiring Corollary \ref{cor:annulus}, modeled on the Diederich-Fornaess Worm Domain \cite{DiFo77a}.  We first prove that a general family of such domains exists:
\begin{lem}
\label{lem:worm_like_domain}
  Let $\mu:\mathbb{C}\rightarrow\mathbb{R}$ be a smooth function such that
  \begin{enumerate}
    \item $\mu(0)>0$

    \item $\liminf_{|z|\rightarrow\infty}\mu(z)>0$

    \item $\mu(e^{i\theta})\leq 0$ for all $\theta\in\mathbb{R}$

    \item There exist $\eps>0$ and $m>0$ such that $(\mu(z))^m\frac{\partial^2\mu}{\partial z\partial\bar z}(z)+\abs{\frac{\partial\mu}{\partial z}(z)}^2>0$ and $\abs{\frac{\partial\mu}{\partial z}(z)}>\frac{(\mu(z))^m}{|z|}$ whenever $0<\mu(z)<\eps$.
  \end{enumerate}
  Then there exists a domain $\Omega\subset\mathbb{C}^2$ satisfying
  \begin{enumerate}
    \item $\Omega$ is a smooth, bounded, pseudoconvex domain with the set of weakly pseudoconvex points given by
    \[
      K=\{(z_1,z_2)\in\mathbb{C}^2:z_1=0, \mu(z_2)\leq 0\}.
    \]

    \item If $X$ is a $(1,0)$ vector field with smooth coefficients in a neighborhood of $K$ such that $\abs{X\delta}>C^{-1}$ and $\arg X\delta=0$ on $K$, then there exists $z\in K$ at which $\abs{\partial\delta([X,\partial/\partial\bar z_2])}>\frac{1}{\sqrt{2}}C^{-1}$.

    \item $\Omega$ does not admit a $C^2$ defining function that is plurisubharmonic on $\partial\Omega$.

  \end{enumerate}
\end{lem}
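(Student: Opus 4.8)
The plan is to construct $\Omega$ as a worm‑type domain modeled directly on $\mu$, glued into a large strictly pseudoconvex domain away from $K$. The preliminary step is to replace $\mu$ by a smooth non‑negative ``width function'' $\Lambda$ on $\mathbb{C}$ with $\{\Lambda=0\}=\{\mu\le 0\}$, flat to infinite order along $\{\mu=0\}$, and satisfying the convexity and gradient bounds forced by hypothesis (4); since increasing $m$ only weakens both inequalities in (4) on $\{0<\mu<\eps\}$, I may take $m$ large and then use $\Lambda=\eta\circ\mu$ for a suitably flat convex increasing $\eta$ vanishing on $(-\infty,0]$. Fixing the winding factor $g(z_2)=e^{i\log\abs{z_2}^2}$ (so $\abs g\equiv 1$ and $\abs{\partial g}=\abs{z_2}^{-1}$ on a neighborhood of $\{\mu\le 0\}$, cut off to be globally smooth and bounded), I set $\rho(z)=\abs{z_1+g(z_2)}^2-1+\Lambda(z_2)$ near $K$ and extend it to a global smooth defining function of a bounded domain $\Omega$ by the standard patching with a coercive function in the strictly pseudoconvex region. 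On $\{z_1=0\}$, where $\abs g\equiv 1$, one has $\rho=\Lambda(z_2)$, so $\partial\Omega\cap\{z_1=0\}=\{\Lambda=0\}=\{\mu\le 0\}$, giving the prescribed analytic variety in $\partial\Omega$.

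The substantive part of property (1), and the main obstacle of the whole lemma, is pseudoconvexity together with the identification of the weakly pseudoconvex set as exactly $K$. The Levi‑form computation for $\rho$ runs as for the classical worm, in three regimes: off $\{z_1=0\}$ and off a neighborhood of $\{\mu=0\}$ the Levi form is strictly positive; along $\{z_1=0\}\cap\{\mu<0\}$ it vanishes identically, since $\partial\Omega$ there contains the complex curve $\{z_1=0\}$; and in the transition region $0<\mu<\eps$ the non‑negativity, after a Schur‑complement reduction eliminating the $\partial/\partial z_1$ direction, reduces to the two clauses of hypothesis (4) — the first giving $i\ddbar\Lambda\ge 0$, and the bound $\abs{\partial\mu}>\mu^m/\abs{z_2}$ matching precisely the $\abs{z_2}^{-1}$ growth of $\partial g$ so as to absorb the cross terms between $\partial\Lambda$ and $\partial g$. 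This yields a smooth, bounded, pseudoconvex $\Omega$ whose set of weakly pseudoconvex points is $K=\{z_1=0,\ \mu(z_2)\le 0\}$, which is property (1).

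For property (2), I would first compute D'Angelo's form $\alpha$ for $\delta$ near $K$ — replacing $\rho$ by $\delta$ alters $\alpha$ only by an exact form — and find that $\alpha$ restricted to $K\cap\{z_1=0\}$ is cohomologous to $-d(\arg z_2)$; thus, for any path $\gamma$ in $\{\mu\le 0\}$ running once around the origin, $\int_\gamma\alpha\to -2\pi$ as its endpoints are pushed to the boundary of $\{\mu\le 0\}$ (and with $g=e^{ik\log\abs{z_2}^2}$ this period becomes any nonzero multiple of $2\pi$). Given $X$ with $\abs{X\delta}>C^{-1}$ and $\arg X\delta\equiv 0$ on $K$, set $h=\frac12\log(X\delta)$: then $h$ is real, continuous near $K$, and $\abs h\le\frac12\log C$. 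Since $K\cap\{z_1=0\}\subset\{z_1=0\}$, the $d\bar z_1$ term of \eqref{eq:dbar_h_eps} restricts to $0$ along that curve, so there $dh-\alpha=-\re\bigl(e^{-2h}\partial\delta([X,\partial/\partial\bar z_2])\,d\bar z_2\bigr)$, whence $\abs{(dh-\alpha)(\dot\gamma)}\le\abs{e^{-2h}}\,\abs{\partial\delta([X,\partial/\partial\bar z_2])}\,\abs{\dot\gamma}<C\cdot\frac1{\sqrt 2}C^{-1}\abs{\dot\gamma}$ if the conclusion of (2) failed everywhere on $K$. Choosing $\gamma$ with uniformly bounded length, with $\int_\gamma\alpha\to -2\pi$, and with endpoints converging to a fixed $p_0\in K$ from the two ``sides'' of $K$ near $p_0$ (the pinch point, in the setting of Corollary \ref{cor:annulus}), continuity of $h$ at $p_0$ gives $h(\gamma(1))-h(\gamma(0))\to 0$, so $2\pi\le\frac1{\sqrt 2}\,\mathrm{length}(\gamma)$ — impossible once the winding in the construction is taken large enough. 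This contradiction, together with attainment of the supremum on the compact set $K$, proves property (2).

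Property (3) follows from property (2): a $C^2$ defining function plurisubharmonic on $\partial\Omega$ would, by the construction of Boas and Straube, produce a smooth $(1,0)$ vector field $X$ on a neighborhood of $K$ with $\abs{X\delta}$ bounded above and below, $\arg X\delta\equiv 0$, and all commutators $\partial\delta([X,\partial/\partial\bar z_j])$ vanishing on $K$ — plurisubharmonicity forces the mixed complex Hessian of the defining function paired with null directions to vanish along $K$, which by \eqref{eq:dbar_h_eps} is exactly the vanishing of those commutators — contradicting (2). The step I expect to be hardest is the pseudoconvexity verification and the exact identification of the weakly pseudoconvex locus in property (1): this is where hypothesis (4) must be used in full, and where the interplay between the flatness of $\Lambda$ along $\{\mu=0\}$, the lower bound $\abs{\partial\mu}>\mu^m/\abs{z_2}$, and the $\abs{z_2}^{-1}$ growth of $\partial g$ has to be balanced so that the Levi form of $\rho$ remains non‑negative.
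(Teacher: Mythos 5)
Your overall architecture matches the paper's: set up a worm-type defining function $\rho=\abs{z_1+e^{i\log\abs{z_2}^2}}^2-1+\Lambda(z_2)$ with $\Lambda$ flat to infinite order along $\{\mu=0\}$ and vanishing on $\{\mu\le 0\}$, and verify pseudoconvexity via a Schur-complement reduction that uses both clauses of hypothesis (4). The paper realizes $\Lambda$ concretely as $e^{A^{1-m}B^{m-1}}\chi(B^{-1}\mu)$ with $\chi(t)=\exp(-t^{1-m})$, whose scaling identity $\chi'(t)=(m-1)\chi(t)/t^m$ is what makes the Levi-form estimate close cleanly; your generic $\eta\circ\mu$ will work but you would still need to carry out that computation. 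That part of your proposal is a plausible sketch of essentially the paper's argument.

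For property (2), however, there is a genuine gap. You argue by integrating $dh-\alpha$ along a path whose endpoints converge from two sides to a ``pinch point'' of $\{\mu\le 0\}$ and then using continuity of $h$ there. But the lemma does not assume $\{\mu\le 0\}$ has a pinch point; that is a feature of the specific example in Corollary~\ref{cor:annulus}, not a hypothesis here. In fact the crucial hypothesis you never invoke is (3), namely $\mu(e^{i\theta})\le 0$ for all $\theta$: it places the entire unit circle $\{z_1=0,\ \abs{z_2}=1\}$ inside $K$, and that closed loop of length $2\pi<2\sqrt 2\pi$ is exactly what makes the period inequality $2\pi\le\frac{1}{\sqrt 2}\,\mathrm{length}(\gamma)$ fail without any need to ``take the winding large enough.'' (Since $\arg X\delta\equiv 0$ on $K$, $h=\frac12\log(X\delta)$ is a single-valued real function on $K$, so $\int_\gamma dh=0$ is automatic for the closed loop; no pinch is required.) The paper's proof is the pointwise version of this: writing $X^1(0,z_2)=e^{i\log\abs{z_2}^2}e^{Y(z_2)}$ with $Y$ real, periodicity of $Y(e^{i\theta})$ forces a critical point $\theta_0$, and at $(0,e^{i\theta_0})$ the commutator picks up the irreducible $i/\bar z_2$ term, giving $\abs{\partial\delta([X,\partial/\partial\bar z_2])}\ge\frac{1}{\sqrt 2}e^Y\ge\frac{1}{\sqrt 2}C^{-1}$. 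You should either replace your path argument with the closed-loop version on the unit circle or, better, adopt the Rolle-type argument, since it avoids any discussion of path lengths and periods altogether.

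For property (3), the paper simply cites the original Diederich--Fornaess argument from \cite{DiFo77a}; your derivation of (3) from (2) via the Boas--Straube equivalence (a $C^2$ plurisubharmonic defining function forces $\alpha|_{\mathcal N}$ to be exact on $K$, hence yields a vector field with $\arg X\delta\equiv 0$ and vanishing commutators) is a reasonable alternative route, though it leans on that equivalence more heavily than your sketch acknowledges and should be stated with the relevant reference to \cite{BoSt93}.
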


\begin{rem}
\label{rem:strong_good_vector_field}
  Conclusion $(2)$ is significant because it shows that $\abs{\arg X_\eps\delta}<\eps$ is the best that we can hope for in Definition \ref{defn:good_vector_field}.  In particular, the methods of \cite{StSu02} do not apply, since these assume $\arg X_\eps\delta=0$.  See also Theorem 5.22 in \cite{Str10}, which works around this issue by introducing a complex-valued function with the same properties as a defining function except that the imaginary part is possibly nontrivial but smaller than $\eps>0$.
\end{rem}

\begin{proof}
  We may assume $m>1$.  Let $\chi(t)=\begin{cases}0&t\leq 0\\\exp(-1/t^{m-1})&t>0\end{cases}$.  Fix $A>0$ sufficiently small so that
  \begin{enumerate}
    \item $\mu(0)>A$,
    \item $(\mu(z))^m\frac{\partial^2\mu}{\partial z\partial\bar z}(z)+\abs{\frac{\partial\mu}{\partial z}(z)}^2>0$ and $\abs{\frac{\partial\mu}{\partial z}(z)}>\frac{(\mu(z))^m}{|z|}$ whenever $0<\mu(z)<A$, and
    \item $\{z\in\mathbb{C}:\mu(z)<A\}$ is bounded.
  \end{enumerate}
  For any $B>\left(\frac{mA^{m-1}+3}{m-1}\right)^{1/(m-1)}$, define
  \begin{equation}
  \label{eq:worm_like_definition}
    \rho(z_1,z_2)=\abs{z_1+\exp(i\log|z_2|^2)}^2-1+e^{A^{1-m}B^{m-1}}\chi(B^{-1} \mu(z_2)),
  \end{equation}
  and let $\Omega$ be the domain defined by $\rho$.  Note that $\rho>0$ whenever $\mu(z_2)>A$.

  On $K$, $z_1=0$ and $\chi(B^{-1} \mu(z_2))$ vanishes together with all of its derivatives, so we have
  \[
    \dbar\rho|_K=e^{i(\log|z_2|^2)}d\bar z_1\Rightarrow i\ddbar\rho|_K\equiv 0\pmod{\partial\rho\wedge\bar\theta,\theta\wedge\dbar\rho},
  \]
  where $\theta$ represents a $(1,0)$-form.  At the other extreme, when $\mu(z_2)=A$, we have $z_1=-\exp(i\log|z_2|^2)$, so
  \begin{multline*}
    \dbar\rho=e^{A^{1-m}B^{m-1}}B^{-1}\chi'(B^{-1}A)\frac{\partial \mu}{\partial\bar z_2}(z_2)d\bar z_2\\
    \Rightarrow i\ddbar\rho\equiv idz_1\wedge d\bar z_1\pmod{\partial\rho\wedge\bar\theta,\theta\wedge\dbar\rho},
  \end{multline*}
  where $\theta$ represents a $(1,0)$-form.  Hence, $\Omega$ is smooth and strictly pseudoconvex in a neighborhood of these points.

  For the remaining points, observe that the boundary of $\Omega$ is parameterized by the following: for any $z_2\in S$, where $S=\{z\in\mathbb{C}:0<\mu(z)<A\}$, and $\theta\in\mathbb{R}$, we have $z_1=e^{i\log|z_2|^2}(r(z_2)e^{i\theta}-1)$, where $r(z_2)=\sqrt{1-e^{A^{1-m}B^{m-1}}\chi(B^{-1} \mu(z_2))}$.  At such points, we may compute
  \[
    \dbar\rho=re^{i(\theta+\log|z_2|^2)}d\bar z_1
    -\left(\frac{ie^{i\theta}-ie^{-i\theta}}{\bar z_2}+2\frac{\partial r}{\partial\bar z_2}\right)r d\bar z_2.
  \]
  and
  \begin{multline*}
    i\ddbar\rho=i\left(dz_1+\frac{i}{z_2}\exp(i\log|z_2|^2)dz_2\right)\wedge\left(d\bar z_1-\frac{i}{\bar z_2}\exp(-i\log|z_2|^2)d\bar z_2\right)+\\
    +i\left(\frac{1}{\abs{z_2}^2}-\frac{(e^{i\theta}+e^{-i\theta})r}{\abs{z_2}^2}-2r\frac{\partial^2 r}{\partial z_2\partial\bar z_2}-2\abs{\frac{\partial r}{\partial\bar z_2}}^2\right)dz_2\wedge d\bar z_2.
  \end{multline*}
  Hence, $L=e^{i(\theta+\log|z_2|^2)}\left(\frac{ie^{i\theta}-ie^{-i\theta}}{z_2}+2\frac{\partial r}{\partial z_2}\right)\frac{\partial}{\partial z_1}+\frac{\partial}{\partial z_2}$ spans the tangential $(1,0)$ vector fields, and since
  \[
    \left(dz_1+\frac{i}{z_2}\exp(i\log|z_2|^2)dz_2\right)(L)=\left(\frac{ie^{i\theta}}{z_2}+2\frac{\partial r}{\partial z_2}\right)e^{i(\theta+\log|z_2|^2)},
  \]
  the Levi-form is equal to
  \[
    \mathcal{L}(L,\bar L)=\frac{2ie^{i\theta}}{z_2}\frac{\partial r}{\partial\bar z_2}-\frac{2ie^{-i\theta}}{\bar z_2}\frac{\partial r}{\partial z_2}
    +\frac{2}{\abs{z_2}^2}-\frac{(e^{i\theta}+e^{-i\theta})r}{\abs{z_2}^2}-2r\frac{\partial^2 r}{\partial z_2\partial\bar z_2}+2\abs{\frac{\partial r}{\partial z_2}}^2.
  \]
  If we minimize this with respect to $\theta$, we have
  \begin{multline*}
    \mathcal{L}(L,\bar L)\geq-2\abs{\frac{2i}{\bar z_2}\frac{\partial r}{\partial z_2}+\frac{r}{\abs{z_2}^2}}
    +\frac{2}{\abs{z_2}^2}-2r\frac{\partial^2 r}{\partial z_2\partial\bar z_2}+2\abs{\frac{\partial r}{\partial z_2}}^2\\
    \geq-2r\frac{\partial^2 r}{\partial z_2\partial\bar z_2}+2\abs{\frac{\partial r}{\partial z_2}}^2-\frac{4}{\abs{z_2}}\abs{\frac{\partial r}{\partial z_2}}+\frac{2(1-r)}{\abs{z_2}^2}.
  \end{multline*}

  When $t\neq 0$, we have $\chi'(t)=\frac{(m-1)\chi(t)}{t^m}$, so we may compute
  \[
    \frac{\partial r}{\partial z_2}=\frac{-(m-1)B^{m-1}(r^{-1}-r)}{2\mu^m}\frac{\partial\mu}{\partial z_2}
  \]
  and hence
  \begin{multline*}
    \frac{\partial^2 r}{\partial z_2\partial\bar z_2}=\\
    \frac{-(m-1)B^{m-1}(r^{-1}-r)}{2\mu^m}\left(\left(\frac{(m-1)B^{m-1}(r^{-2}+1)}{2\mu^m}-\frac{m}{\mu}\right)\abs{\frac{\partial\mu}{\partial z_2}}^2+\frac{\partial^2\mu}{\partial z_2\partial\bar z_2}\right).
  \end{multline*}
  This gives us
  \begin{multline*}
    \frac{\mathcal{L}(L,\bar L)}{1-r}\geq\frac{2}{\abs{z_2}^2}+\frac{(m-1)B^{m-1}(1+r)}{\mu^m}\times\\
    \left(\left(\frac{(m-1)B^{m-1}}{r^2\mu^m}-\frac{m}{\mu}\right)\abs{\frac{\partial\mu}{\partial z_2}}^2-\frac{2}{r\abs{z_2}}\abs{\frac{\partial\mu}{\partial z_2}}+\frac{\partial^2\mu}{\partial z_2\partial\bar z_2}\right).
  \end{multline*}
  By our hypotheses on $\mu$, we have
  \begin{multline*}
    \frac{\mathcal{L}(L,\bar L)}{1-r}>\frac{2}{\abs{z_2}^2}+ \\ \frac{(m-1)B^{m-1}(1+r)}{\mu^m}\left(\frac{(m-1)B^{m-1}}{r^2\mu^m}-\frac{m}{\mu}-\frac{2}{r\mu^m}-\frac{1}{\mu^m}\right)\abs{\frac{\partial\mu}{\partial z_2}}^2,
  \end{multline*}
  and our lower bound on $B$ is sufficient to guarantee that this is positive definite whenever $0<\mu<A$ and $0<r<1$.

  On $K$, we have $\partial\rho=\exp(-i\log|z_2|^2)dz_1$, so $\partial\delta=\frac{1}{\sqrt{2}}\exp(-i\log|z_2|^2)dz_1$.  Let $X=X^1\frac{\partial}{\partial z_1}+X^2\frac{\partial}{\partial z_2}$.  If $\arg X\delta=0$ on $K$, then $X^1\exp(-i\log|z_2|^2)$ is real on $K$.  We write $X^1(0,z_2)=\exp(i\log|z_2|^2)\exp(Y(z_2))$ for some real, smooth, function $Y(z_2)$ bounded below by $-\log C$ defined whenever $\mu(z_2)\leq 0$.  On $K$,
  \[
    \partial\delta([X,\partial/\partial\bar z_2])=-\frac{1}{\sqrt{2}}\exp(-i\log|z_2|^2)\frac{\partial X^1}{\partial\bar z_2}=-\frac{1}{\sqrt{2}}e^Y\left(\frac{\partial Y}{\partial\bar z_2}+\frac{i}{\bar z_2}\right).
  \]
  Note that $Y$ is defined when $z_2=e^{i\theta}$ for any $\theta\in\mathbb{R}$, so we can compute $\frac{\partial}{\partial\theta}Y(e^{i\theta})=2\re\left(ie^{i\theta}\frac{\partial Y}{\partial z_2}(e^{i\theta})\right)$.  Since $Y(e^0)=Y(e^{2\pi i})$, there must exist $0<\theta_0<2\pi$ at which $\frac{\partial}{\partial\theta}Y(e^{i\theta})|_{\theta=\theta_0}=0$.  At such a point, there must exist $t\in\mathbb{R}$ such that $\frac{\partial Y}{\partial z_2}(e^{i\theta_0})=e^{-i\theta_0}t$.  Hence,
  \[
    \partial\delta([X,\partial/\partial\bar z_2])|_{(0,e^{i\theta_0})}=-\frac{1}{\sqrt{2}}e^Ye^{i\theta_0}(t+i),
  \]
  and we have $\abs{\partial\delta([X,\partial/\partial\bar z_2])|_{(0,e^{i\theta_0})}}>\frac{1}{\sqrt{2}}C^{-1}$

  The proof that $\Omega$ fails to admit a plurisubharmonic defining function is identical to the proof for the worm domain given in \cite{DiFo77a}.
\end{proof}

To construct a family of explicit examples, we first consider the set illustrated by Example 1 in Figure \ref{fig:examples}.  For any $0<s<1$, we define
\begin{multline*}
  \mu(z)=(|z|^2-1)^2-s^2|z-1|^4\\
  =4(\re z-1)^2+4(\re z-1)|z-1|^2+(1-s^2)|z-1|^4.
\end{multline*}
Since $|z|^2-1\pm s|z-1|^2=(1\pm s)\abs{z\mp \frac{s}{1\pm s}}^2-\frac{1}{1\pm s}$, $\mu(z)\leq 0$ on the set bounded by the two circles $\abs{z-\frac{s}{1+s}}=\frac{1}{1+s}$ and $\abs{z+\frac{s}{1-s}}=\frac{1}{1-s}$, as in Example 1 in Figure \ref{fig:examples}.  Observe that when $\mu(z)=0$ and $z$ is bounded away from one, then $\mu$ is a defining function for a circle, and hence hypothesis (4) of Lemma \ref{lem:worm_like_domain} is satisfied. It remains to see that this hypothesis is satisfied on some neighborhood of $z=1$.

We introduce the non-isotropic distance
\[
  d(z)=(\re z-1)^2+(\im z)^4.
\]
When $z$ is close to $1$, we have $|\mu(z)|\leq O(d(z))$.  Extracting the terms of order $O(d(z))$, we have
\[
  \mu(z)=4(\re z-1)^2+4(\re z-1)(\im z)^2+(1-s^2)(\im z)^4+O\left((d(z))^{3/2}\right).
\]
Fix $\frac{1-s}{2}<a<\frac{5-s^2}{2(5+s^2)}$ and set
\[
  b=\frac{4-(3+s^2)a}{3+s^2+4a}=\frac{(1-2a)^2-s^2}{3+s^2+4a}+1-a=\frac{5-s^2-2(5+s^2)a}{6+2s^2+8a}+\frac{1}{2}.
\]
Then $\frac{1-s}{2}<a<\frac{1}{2}<b<\frac{1+s}{2}$, $a+b<1$, and $4ab+(a+b)(3+s^2)-4=0$.  If $z$ satisfies $-a(\im z)^2\geq\re z-1\geq-b(\im z)^2$, then
\begin{multline*}
  0\leq(\re z-1+b(\im z)^2)(-(\re z-1)-a(\im z)^2)\\
  =(a+b-1)d(z)-\frac{a+b}{4}(4(\re z-1)^2+4(\re z-1)(\im z)^2+(1-s^2)(\im z)^4),
\end{multline*}
so
\[
  \mu(z)\leq-\frac{4(1-a-b)}{a+b}d(z)+O\left((d(z))^{3/2}\right).
\]
Hence, for $d(z)$ sufficiently small, we will have $\mu(z)\leq 0$ when $-a(\im z)^2\geq\re z-1\geq-b(\im z)^2$.  Equivalently, there exists $r_{a,b}>0$ such that if $d(z)<r_{a,b}$ and $\mu(z)>0$, then $-a(\im z)^2<\re z-1$ or $\re z-1<-b(\im z)^2$.  If we differentiate polynomial terms with respect to $\re z$, then the exponent of $d(z)$ in the upper bound will decrease by at most $\frac{1}{2}$, while differentiating by $\im z$ will decrease this exponent by at most $\frac{1}{4}$.  Hence, differentiating in $z$ will decrease the exponent by at most $\frac{1}{2}$.  In particular,
\[
  \frac{\partial\mu}{\partial z}=4(\re z-1)+2(\im z)^2+O\left((d(z))^{3/4}\right).
\]
Fix $z$ satisfying $d(z)<r_{a,b}$ and $\mu(z)>0$.  If $\re z-1\geq 0$, then $4(\re z-1)+2(\im z)^2\geq 2\sqrt{d(z)}$.  If $-a(\im z)^2<\re z-1<0$, then $4(\re z-1)+2(\im z)^2>\frac{2-4a}{1+a}\left((1-\re z)+(\im z)^2\right)\geq\frac{2-4a}{1+a}\sqrt{d(z)}$.  If $\re z-1<-b(\im z)^2$, then $4(\re z-1)+2(\im z)^2<-\frac{4b-2}{1+b}\left((1-\re z)+(\im z)^2\right)<-\frac{4b-2}{1+b}\sqrt{d(z)}$.  In any case, $\abs{4(\re z-1)+2(\im z)^2}> \min\{\frac{2-4a}{1+a},\frac{4b-2}{1+b}\}\sqrt{d(z)}$.  Hence, whenever $d(z)<r_{a,b}$ and $\mu(z)>0$ we have
\[
  \abs{\frac{\partial\mu}{\partial z}}\geq \min\set{\frac{2-4a}{1+a},\frac{4b-2}{1+b}}\sqrt{d(z)}-O\left((d(z))^{3/4}\right).
\]
Since $\abs{\frac{\partial^2\mu}{\partial z\partial\bar z}}\leq O\left(1\right)$, we have enough information to confirm that hypothesis (4) of Lemma \ref{lem:worm_like_domain} is satisfied near $z=1$ for any $m>1$.

Example 1 from Figure \ref{fig:examples} provides an example for Proposition \ref{prop:annulus} for any $\gamma>\frac{1}{2}$, but now we wish to construct a family of examples for which the hypotheses of Proposition \ref{prop:annulus} are satisfied but $\gamma$ may need to be arbitrarily close to $1$.  We fix integers $j>3$ and $j>k\geq 2j/3$ along with a constant $0<s<1$ and set
\[
  \mu(z)=(|z|^2-1)^{2j}-|z-1|^{4(j-k)}(2\im z)^{2k}-s^2|z-1|^{4j}.
\]
One can check that $\mu$ will also have the necessary properties when $2j/3>k>0$, but the error terms will require additional cases, so we restrict to the case in which $k>2j/3$, since this will suffice to take $\gamma$ arbitrarily close to $1$.  We have $\mu(0)=1-s^2>0$ and
\[
  \mu(e^{i\theta})=-(2-2\cos\theta)^{2(j-k)}(2\sin\theta)^{2k}-s^2(2-2\cos\theta)^{2j},
\]
which is strictly negative unless $e^{i\theta}=1$.  When $z\neq 1$, we introduce the biholomorphic change of coordinates $z=\frac{iw-1}{iw+1}$ and compute
\[
  \mu(z(w))=\abs{\frac{2}{iw+1}}^{4j}\left((\im w)^{2j}-(\re w)^{2k}-s^2\right).
\]
Near $w=i$, $\mu(z(w))$ has principal part $(1-s^2)\abs{\frac{2}{iw+1}}^{4j}$, and hence $\mu(z)$ is uniformly bounded away from zero as $|z|\rightarrow\infty$.  Therefore, $\mu(z)$ satisfies hypotheses (1)-(3) of Lemma \ref{lem:worm_like_domain}.  Furthermore, $\mu(z(w))=0$ only along the pair of smooth curves parameterized by $\im w=\pm((\re w)^{2k}+s^2)^{1/2j}$.  We easily check that $\frac{\partial}{\partial w}\mu(z(w))\neq 0$ when $\mu(z(w))=0$, so $\frac{\partial\mu}{\partial z}\neq 0$ whenever $\mu(z)=0$ and $z\neq 1$.  Hence, if our requirements on $\mu$ will fail, they can only fail in a neighborhood of $z=1$.

To study the behavior near $z=1$, we introduce the nonisotropic distance
\[
  d(z)=(\re z-1)^{2j}+(\im z)^{4j-2k}.
\]
On some neighborhood of $z=1$, $|\mu(z)|\leq O(d(z))$.  To get a more precise approximation for $\mu(z)$ when $z$ is close to $1$, we first note that since $4j-2k>2j$ we have
\[
  |z-1|^{4j}\leq O\left((d(z))^{1+k/(2j-k)}\right).
\]
Also near $z=1$, we have
\[
  \abs{|z-1|^2-(\im z)^2}\leq O\left((d(z))^{1/j}\right),
\]
so, using $\frac{1}{j}+\frac{4j-2k-2}{4j-2k}=1+\frac{j-k}{j(2j-k)}$, we obtain
\[
  \abs{|z-1|^{4(j-k)}(2\im z)^{2k}-2^{2k}(\im z)^{4j-2k}}\leq O\left((d(z))^{1+(j-k)/(j(2j-k))}\right).
\]
Since $|z|^2-1=|z-1|^2+2(\re z-1)$, we have
\[
  \abs{|z|^2-1-2(\re z-1)}\leq O\left((d(z))^{1/(2j-k)}\right),
\]
so using $\frac{1}{2j}<\frac{1}{2j-k}<\frac{1}{j}$ and $\frac{1}{2j-k}+\frac{2j-1}{2j}=1+k/(2j(2j-k))$, we obtain
\[
  \abs{(|z|^2-1)^{2j}-2^{2j}(\re z-1)^{2j}}\leq O\left((d(z))^{1+k/(2j(2j-k))}\right).
\]
Comparing our error terms, we see that since $k>\frac{2j}{3}$, $O\left((d(z))^{1+(j-k)/(j(2j-k))}\right)$ is the dominant error term.  Hence,
\begin{equation}
\label{eq:mu_approximation}
  \mu(z)=2^{2j}(\re z-1)^{2j}-2^{2k}(\im z)^{4j-2k}+O\left((d(z))^{1+(j-k)/(j(2j-k))}\right).
\end{equation}
Note that if $|\im z|<|\re z-1|^{j/(2j-k)}$, then $2^{2j}(\re z-1)^{2j}-2^{2k}(\im z)^{4j-2k}>(2^{2j-1}-2^{2k-1})d(z)$.  Hence, there exists $r>0$ such that if $|\im z|<|\re z-1|^{j/(2j-k)}$ and $d(z)<r$, then $\mu(z)>0$.  Therefore, the hypotheses of Proposition \ref{prop:annulus} are satisfied for $\gamma=\frac{j}{2j-k}$.

Fix $0<c<2^{-(j-k)/j}$.  If $|\re z-1|\leq c|\im z|^{(2j-k)/j}$, then $2^{2j}(\re z-1)^{2j}-2^{2k}(\im z)^{4j-2k}\leq-\frac{2^{2k}-2^{2j}c^{2j}}{1+c^{2j}}d(z)$.  Once again, there must exist $r_c>0$ such that whenever $\mu(z)>0$ and $d(z)< r_c$, then we have $|\re z-1|>c|\im z|^{(2j-k)/j}$.  Differentiating a polynomial bounded by some power of $d(z)$ with respect to $\re z$ will decrease the exponent of this bound by at most $\frac{1}{2j}$, and differentiating with respect to $\im z$ will decrease the exponent by at most $\frac{1}{4j-2k}$.  Hence, differentiating in $z$ will decrease the exponent of the upper bound by at most $\frac{1}{2j}$.  Since the derivative of $(\im z)^{4j-2k}$ with respect to $z$ is of order $O\left((d(z))^{1-1/(4j-2k)}\right)$, the derivatives of the polynomials in the error term in \eqref{eq:mu_approximation} with respect to $z$ are of order $O\left((d(z))^{1-k/(2j(2j-k))}\right)$, and $\frac{1}{4j-2k}>\frac{k}{2j(2j-k)}$, we have
\[
  \frac{\partial\mu}{\partial z}=j 2^{2j}(\re z-1)^{2j-1}+O\left((d(z))^{1-1/(4j-2k)}\right).
\]
Suppose $\mu(z)>0$ and $d(z)<r_c$.  Then $|\re z-1|>c|\im z|^{(2j-k)/j}$, so $(\re z-1)^{2j}>\frac{c^{2j}}{1+c^{2j}}d(z)$.  Hence,
\[
  \abs{\frac{\partial\mu}{\partial z}}>j2^{2j}\left(\frac{c^{2j}}{1+c^{2j}}d(z)\right)^{1-1/(2j)}-O\left((d(z))^{1-1/(4j-2k)}\right).
\]
Since $\abs{\frac{\partial^2\mu}{\partial z\partial\bar z}}\leq O\left((d(z))^{1-1/j}\right)$, we can check that the hypotheses of Proposition \ref{lem:worm_like_domain} are satisfied for any $m\geq 1$.  Furthermore, we have a concrete family of examples for which Proposition \ref{prop:annulus} and Corollary \ref{cor:annulus} hold for $\gamma$ arbitrarily close to $1$.

Unfortunately, this leaves open the case where the set of infinite type points is topologically equivalent to the set considered in Proposition \ref{prop:annulus}, but satisfies an interior cone condition, as in Example 2 in Figure \ref{fig:examples}.  The methods of this section can easily be extended to construct an example generated by $\mu(z)=|z-1|^{4(k-j)}(|z|^2-1)^{2j}-(2\im z)^{2k}-s|z-1|^{4k}$ where $k\geq j>0$, so such examples exist, but it is not clear if good vector fields exist or what the Diederich-Fornaess Index will equal.  We believe that understanding these open questions may shed greater light on the relationship between families of good vector fields and the Diederich-Fornaess Index.

% ----------------------------------------------------------------
\bibliographystyle{amsplain}
\bibliography{harrington}
\end{document}